\newtheorem{theorem}{Theorem}
\newtheorem{proposition}[theorem]{Proposition}
\newtheorem{definition}[theorem]{Definition}
\newtheorem{lemma}[theorem]{Lemma}
\theoremstyle{definition}
\newcommand{\bel}{\begin{equation} \label}
\newcommand{\ee}{\end{equation}}
\newcommand{\R}{\mathbb{R}}
\newcommand{\C}{\mathbb{C}}
\theoremstyle{remark}
\newtheorem{myremarks}[theorem]{Remarks}
\newcounter{numcount}
\newcommand{\labelnummer}{\mbox{\normalfont (\roman{numcount})}}%
\newenvironment{nummer}%
  {\let\curlabelspeicher\@currentlabel%
    \begin{list}{\labelnummer}%
      {\usecounter{numcount}\leftmargin0pt%
        \topsep0.5ex\partopsep2ex\parsep0pt\itemsep0ex\@plus1\p@%
        \labelwidth2.5em\itemindent3.5em\labelsep1em%
      }%
    \let\saveitem\item%
    \def\item{\saveitem%
      \def\@currentlabel{{\upshape\curlabelspeicher}$\,$\labelnummer}}%
    \let\savelabel\label%
    \def\label##1{\savelabel{##1}%
      \@bsphack%
        \ifmmode\else%
          \protected@write\@auxout{}%
          {\string\newlabel{##1item}{{\labelnummer}{\thepage}}}%
        \fi%
      \@esphack%
    }%
  }{\end{list}}%
\renewcommand{\appendix}{\def\thesection{\textsc{Appendix}}}
\renewcommand\d{\mathrm d}
 \let\leq\le
 \let\geq\ge
 \let\Im\undefined
\DeclareMathOperator{\Im}{Im}
\DeclareMathOperator{\tr}{tr\kern1pt}
\renewcommand{\Im}{\mathop\mathrm{Im}\nolimits}
\newcommand{\N}{{\mathbb N}}
\newif\ifper\pertrue
\def\per{.}
\def\bti{\@ifnextchar[\bbti\bbbti}
\def\bbti[#1]#2{#2, #1.}
\def\bbbti#1{#1.}
\def\z{\@ifnextchar[\zz\zzz}
\def\zz[#1]#2#3#4#5{\perfalse\emph{#2} \textbf{#3}, #4 (#5) [#1]}
\def\zzz#1#2#3#4{\emph{#1} \textbf{#2}, #3 (#4)\ifper\per\fi\pertrue}
\def\pub{\@ifstar\pubstar\pubnostar}
\def\pubnostar{\@ifnextchar[\@@pubnostar\@pubnostar}
\def\@@pubnostar[#1]#2#3#4{#2, #3, #4, #1\ifper\per\fi\pertrue}
\def\@pubnostar#1#2#3{#1, #2, #3\ifper\per\fi\pertrue}
\def\pubstar[#1]#2#3#4{\perfalse #2, #3, #4 [#1]\pertrue}
\newcommand{\beq}{\begin{equation}}
\newcommand{\eeq}{\end{equation}}
\newcommand{\ba}{\begin{array}}
\newcommand{\ea}{\end{array}}
\newcommand{\bea}{\begin{eqnarray}}
\newcommand{\eea}{\end{eqnarray}}
\newcommand{\beas}{\begin{eqnarray*}}
\newcommand{\eeas}{\end{eqnarray*}}
\def\P{I\kern-.30em{P}}
\def\E{I\kern-.30em{E}}
\renewcommand{\E}{\mathbb{E}\mkern2mu}
\renewcommand{\P}{\mathbb{P}}
\begin{document}

\title[Iso-resonant potentials]{Compactness of iso-resonant potentials for Schr\"odinger operators in dimensions one and three}

\author[P.\ D.\ Hislop]{Peter D.\ Hislop}
\address{Department of Mathematics,
    University of Kentucky,
    Lexington, Kentucky  40506-0027}
\email{peter.hislop@uky.edu}

\author[R.\ Wolf]{Robert Wolf}
\address{
    Department of Mathematical Sciences,
    University of Cincinnati,
    Cincinnati, OH 45221-0025}
\email{robert.wolf@uc.edu}

\thanks{Both authors were partially supported by NSF grant DMS 11-03104 during the time this work was done. This paper is partly based on the dissertation submitted by the second author in partial fulfillment of the requirements for a PhD at the University of Kentucky. PDH thanks P.\ Aubin, T.\ J.\ Christiansen, and P.\ A.\ Perry for several conversations on resonances.}

\begin{abstract}
We prove compactness of a restricted set of real-valued, compactly supported potentials $V$ for which the corresponding Schr\"odinger operators $H_V$ have the same resonances, including multiplicities. More specifically, let $B_R(0)$ be the ball of radius $R > 0$ about the origin in $\R^d$, for $d=1,3$. Let $\mathcal{I}_R (V_0)$ be the set of real-valued potentials in $C_0^\infty( \overline{B}_R(0); \R)$ so that the corresponding Schr\"odinger operators have the same resonances, including multiplicities, as $H_{V_0}$. We prove that the set $\mathcal{I}_R (V_0)$ is a compact subset of $C_0^\infty (\overline{B}_R(0))$ in the $C^\infty$-topology. An extension to Sobolev spaces of less regular potentials is discussed.
\end{abstract}

\maketitle \thispagestyle{empty}

%\newpage

\tableofcontents

\vspace{.2in}

{\bf  AMS 2010 Mathematics Subject Classification:} 35R01, 58J50,
47A55  \\
{\bf  Keywords:}
resonances, Schrodinger operators, isopolar potentials\\

%\today
%%%%%%%%%%%%%%%%%%%%%%%%%%%%%%%%%%%%%%%%%%%%%%%%%%%%%%%%%%%%%%%%%%%%%%%%%%%%%%%%%%%%%%%%%%%%%%%%%%%%%%%%%%%

\section{Statement of the problem and results}\label{sec:introduction}
\setcounter{equation}{0}

We are interested in the resonances of Schr\"odinger operators
$H_V = - \Delta + V$, acting on $L^2 ( \R^d)$, for $d=1,3$,
with compactly supported, real-valued potentials $V \in C_0^\infty (\R^d)$.
Resonances are the poles of the meromorphic continuation of the cut-off resolvent
of $H_V$ defined as follows.
For any $V \in C_0^\infty ( \R^d )$, we denote by $\chi_V \in C_0^\infty ( \R^d )$ any
smooth, real-valued, compactly supported function
such that $\chi_V V = V$. For $\Im \lambda > 0$, the cut-off resolvent $R_V(\lambda) := \chi_V (H_V - \lambda^2)^{-1} \chi_V$ is meromorphic operator-valued function with at most finitely-many
poles on the positive imaginary axis. These poles at $i \lambda_j$, with $\lambda_j > 0$, are distinguished by the fact that
$-\lambda_j^2$ is a negative eigenvalue of $H_V$.
This operator-valued function $R_V(\lambda)$ admits a meromorphic continuation as a bounded operator on $L^2 (\R^d)$ across the real axis to the lower half-complex plain when $d \geq 1$ is odd.
%, or to the Riemann surface $\Lambda$ of the logarithm when $d \geq 2$ is even (there is a log-singularity at %$\lambda = 0$ when $d=2$).
The poles of the continuation are the resonances of $H_V$. They are independent of the choice of cut-off $\chi_V$ satisfying the above conditions.

Our goal is to characterize the set of potentials so that the corresponding Schr\"odinger operators are iso-resonant, that is, they have the same resonances with the same multiplicities. However, if $V$ and $\tilde{V}$ are related by a Euclidean symmetry, the Schr\"odinger operators $H_V$ and $H_{\tilde{V}}$ are iso-resonant.
Consequently, we must remove translational invariance from the problem to get a meaningful result.
We fix a ball $B_R(0)$ of finite radius $R >0$ and consider a fixed potential $V_0 \in C_0^\infty (\overline{B}_R(0); \R)$.
The \textbf{iso-resonant set} of $V_0$, denoted here by $\mathcal{I}_R(V_0)$, is the set of all $V \in C_0^\infty (\overline{B}_R(0))$ so that $H_V$ has the same resonances as $H_{V_0}$, including multiplicities. This set is invariant under the action of the compact rotation group.
Our main result is the following theorem.

\begin{theorem}\label{thm:iso-res-cmpt1}
 The set $\mathcal{I}_R (V_0)$ of iso-resonant potentials to $V_0 \in C_0^\infty (\overline{B}_R(0))$ is compact in the $C^\infty$-topology in dimensions $d=1, 3$.
\end{theorem}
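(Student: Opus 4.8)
The plan is to establish two statements: \textbf{(A)} $\mathcal{I}_R(V_0)$ is bounded in the Sobolev space $H^m(\overline{B}_R(0))$ for \emph{every} $m \geq 0$, with a bound depending on $m$; and \textbf{(B)} $\mathcal{I}_R(V_0)$ is closed in $C^\infty(\overline{B}_R(0))$. Granting these, compactness follows at once: by (A) together with Rellich's theorem (the embedding $H^{m+1}(\overline{B}_R(0)) \hookrightarrow H^m(\overline{B}_R(0))$ is compact on functions supported in the fixed compact set $\overline{B}_R(0)$), every sequence in $\mathcal{I}_R(V_0)$ has a subsequence converging in each $H^m$; a diagonal extraction then produces one subsequence converging in all $H^m$ simultaneously, hence in $C^\infty$. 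Since $C^\infty(\overline{B}_R(0))$ is metrizable, sequential compactness is compactness, and (B) guarantees that the limit again lies in $\mathcal{I}_R(V_0)$.

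For (A), I would pass from the resonance data to scattering invariants. In odd dimension $d$ the meromorphic continuation of $R_V(\lambda)$ is governed by a regularized perturbation determinant $\mathfrak{d}_V(\lambda)$, an entire function of order at most $d$ whose divisor of zeros is exactly the resonance divisor of $H_V$ (the negative eigenvalues producing the zeros on the positive imaginary axis). Hence if $V \in \mathcal{I}_R(V_0)$, then $\mathfrak{d}_V$ and $\mathfrak{d}_{V_0}$ have the same divisor, so by the Hadamard factorization theorem their ratio is $e^{q(\lambda)}$ for a polynomial $q$ of degree at most $d$. Comparing with the high-energy asymptotic expansion
\[
 \log \mathfrak{d}_V(\lambda) \;\sim\; \sum_{k \geq 1} a_k(V)\,\lambda^{\,d-2k}, \qquad \lambda \to \infty ,
\]
which follows from the Birman--Krein formula together with the asymptotics of the spectral shift function of the pair $(H_V, -\Delta)$, one concludes that $a_k(V) = a_k(V_0)$ whenever $d - 2k < 0$: that is, for all $k \geq 1$ if $d = 1$ and for all $k \geq 2$ if $d = 3$. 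In both cases this includes $k = 2$.

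The coefficients $a_k(V)$ are, up to nonzero universal constants, the heat invariants of $H_V$: they have the form $a_k(V) = \gamma_k \,\|\nabla^{k-2} V\|_{L^2(\R^d)}^2 + Q_k(V)$ with $\gamma_k \neq 0$, where $Q_k(V)$ is the integral over $\R^d$ of a universal polynomial in $V$ and its derivatives which, after suitable integrations by parts, enters this identity strictly below the leading quadratic term --- every monomial involves only derivatives of $V$ of order at most $k-3$, except possibly for a single factor of order up to $k-2$ paired with low-order factors. Since $a_2(V) = \gamma_2 \|V\|_{L^2}^2$ is pinned to $\|V_0\|_{L^2}^2$, I would induct on $m$ to show $\sup_{V \in \mathcal{I}_R(V_0)} \|V\|_{H^m} < \infty$: assuming the bound through $H^{m-1}$, Gagliardo--Nirenberg interpolation on the fixed domain $\overline{B}_R(0)$ bounds $Q_{m+2}(V)$ by a function of $\|V\|_{H^{m-1}}$ times a subquadratic power of $\|\nabla^m V\|_{L^2}$, while $a_{m+2}(V)$ equals the fixed value $a_{m+2}(V_0)$; the resulting inequality forces a uniform bound on $\|\nabla^m V\|_{L^2}$, hence on $\|V\|_{H^m}$. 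This proves (A).

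For (B), let $V_n \in \mathcal{I}_R(V_0)$ with $V_n \to V$ in $C^\infty(\overline{B}_R(0))$. By the analytic-Fredholm construction, $\mathfrak{d}_{V_n} \to \mathfrak{d}_V$ locally uniformly on $\C$ (continuous dependence of the regularized determinant on the potential). Each $\mathfrak{d}_{V_n}$ has the same divisor as $\mathfrak{d}_{V_0}$, a set of isolated points of fixed finite multiplicity, so Hurwitz's theorem applied on small disks forces $\mathfrak{d}_V$ to have that divisor as well; thus $H_V$ is iso-resonant with $H_{V_0}$ and $V \in \mathcal{I}_R(V_0)$. I expect the main obstacle to be the inductive step in (A): establishing the precise structure of the heat invariants $a_k$ --- in particular that the remainder $Q_k$ genuinely appears below the leading term and is therefore controlled by the already-established Sobolev bounds --- is the delicate point; a secondary subtlety is the degree bookkeeping in the Hadamard step, ensuring that the polynomial ambiguity of degree $\leq d$ destroys only finitely many $a_k$ and, crucially, not the base invariant $\|V\|_{L^2}^2$.
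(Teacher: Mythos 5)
Your proposal is correct in outline and shares the paper's overall architecture --- constancy of the heat invariants across $\mathcal{I}_R(V_0)$, a Br\"uning--Donnelly induction giving uniform bounds on every Sobolev norm, compactness by a diagonal argument, and closedness via continuity of the regularized determinant plus Hurwitz (cf.\ Proposition \ref{prop:conv-mult1} and Lemma \ref{lemma:hurwitz1}) --- but it reaches the first step by a genuinely different route. The paper obtains $c_j(V)=c_j(V_0)$ (all $j\geq 2$ for $d=3$) from Melrose's Poisson formula: iso-resonant potentials have identical regularized wave traces for $t\neq 0$, and the small-$t$ expansion \eqref{eq:wave-trace2} then fixes the wave invariants, which are known multiples of the heat invariants with the Gilkey-type structure of Proposition \ref{prop:wave-coef1}. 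You instead apply Hadamard factorization to the regularized determinant (entire of order at most $d$), so iso-resonance forces $D_{p,V}=e^{q}D_{p,V_0}$ with $\deg q\leq d$, and then compare high-energy (Birman--Krein/spectral-shift) asymptotics to see that only the coefficients of nonnegative powers of $\lambda$ --- i.e.\ only $\int V$ when $d=3$, nothing when $d=1$ --- can be destroyed. This is a legitimate alternative, close in spirit to Zworski's isopolar remark and Christiansen's arguments (and the Poisson formula is itself proved through such a factorization); it makes the bookkeeping of lost invariants transparent, whereas the paper leans on the ready-made wave-trace expansion. Three points you gloss over deserve care, though none is fatal: (i) the expansion coefficients of $\log D_{p,V}$ agree with the heat invariants only up to the correction terms introduced by the $p$-regularization, so the leading structure $\gamma_k\|\nabla^{k-2}V\|_2^2$ plus terms involving derivatives of order at most $k-3$ must still be verified --- this is exactly what Proposition \ref{prop:wave-coef1} (via Hitrik--Polterovich) supplies, and note the paper's index set allows no factor of order $k-2$ in the remainder, slightly cleaner than what you assume; (ii) the subquadratic interpolation bound on the remainder is precisely where $d\leq 3$ enters (the paper's $\int V^3$ computation shows the exponent becomes $2$ at $d=4$), which you assert without justification; (iii) Hurwitz needs the limit determinant not identically zero --- the paper argues $c_2(V_\infty)=\int V_0^2\neq 0$, so $V_\infty\not\equiv 0$ and $D_{p,V_\infty}\not\equiv 0$ by S\'a Barreto--Zworski (alternatively, $D_{p,V}(\lambda)\to 1$ as $\Im\lambda\to+\infty$). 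Your Rellich-plus-diagonal compactness step is an equivalent substitute for the paper's Arzel\`a--Ascoli argument in the Fr\'echet metric.
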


Theorem \ref{thm:iso-res-cmpt1} is the analog of the compactness result for iso-spectral potentials of Br\"uning \cite{bruning}. He considered a compact Riemannian manifold $(M,g)$ and Schr\"odinger operators $H_V = -\Delta_g + V$, where $-\Delta_g \geq 0$ is the Laplace-Beltrami operator on $M$ for metric $g$, and the real-valued potential $V \in C^\infty (M)$. These self-adjoint, lower semi-bounded operators $H_V$ have only discrete spectrum accumulating at infinity. For a fixed metric $g$, given $V_0 \in C^\infty (M)$, the  \textbf{iso-spectral set} of $V_0$ is the set of real-valued potentials $V \in C^\infty (M)$ so that the corresponding Schr\"odinger operators have the same eigenvalues, including multiplicity, as $H_{V_0}$.
For $\dim M = 2,3$, Br\"uning proved that the iso-spectral set of $V_0$ is compact in the $C^\infty$-topology.
The restriction of the dimension comes from use of the Sobolev Embedding Theorem, Theorem \ref{thm:sobolevEmbedding1}. This result was improved and extended by Donnelly \cite{donnelly}.

The existence of infinitely-many resonances for Schr\"odinger with nonzero, real-valued potentials $V \in C_0^\infty (\R^d; \R)$  in odd dimensions greater than or equal to three was proven first by Melrose \cite{melrose} using the Poisson formula and wave invariants discussed in section \ref{sec:poisson1}. This result was extended for more general potentials, including super-exponentially decaying potentials of noncompact support, by S\'a Barreto and Zworski \cite{sabarreto-zworski1,sabarreto-zworski2}.
The existence of resonances for Schr\"odinger operators in even dimensions $d \geq 4$ was proven by S\'a Barreto and Tang \cite{sabarreto-tang1}. We note that these results prove the resonance-rigidity of the zero potential. Since $H_0 = - \Delta$ has no resonances, the iso-resonance set of $V_0 = 0$ contains precisely one element among super-exponentially decaying real-valued potentials.
This rigidity is also a consequence of the analysis of the heat invariants in section \ref{sec:poisson1}.

The present work focuses on real-valued potentials.
Christiansen \cite{tjc1,tjc2} proved that there are many complex-valued potentials for which the Schr\"odinger operator $H_V$ has no resonances. This implies that among complex-valued potentials, the iso-resonance set of $V_0 = 0$ is large. In fact, Christiansen \cite{tjc1} constructs a large family of nontrivial complex-valued potentials with no resonances.

In a recent work \cite{smith-zworski1}, Smith and Zworski relaxed the smoothness assumption and proved that a nontrivial real-valued potential in $L^\infty_0 (B_R(0); \R)$, for $d \geq 3$ odd, has at least one resonance. Furthermore, they proved that if two potentials $V_j \in
L^\infty_0 (\R^d; \R)$, for $j=1,2$, are iso-resonant, then $V_1 \in H^m (\R^d) ~\mbox{if and only if} ~V_2 \in H^m (\R^d)$, for any $m \in \N$. We can then define iso-resonant classes for less regular potentials.
If $V_0 \in H^m (B_R(0); \R)$, for $m \geq 2$, then $V_0 \in L_0^\infty ( B_R(0); \R)$ since $d \leq 3$.

We define $\mathcal{I}_R^m(V_0) \subset H^m (B_R(0))$ as the iso-resonant class of $V_0$ in $H^m(B_R(0))$.
Although the Poisson formula \eqref{eq:wave-trace2} holds for the wave trace if $V \in L^\infty_0(\R^d)$,
we do not know if a finite term small $t$-expansion holds for the wave trace as Smith and Zworski \cite{smith-zworski1} proved for the heat trace (see section \ref{sec:less-reg-compactness} for a discussion.)
As a consequence, we must make an assumption:

\vspace{.1in}

\noindent
{\bf{WTE}}. \emph{For $V \in H^m (\overline{B}_R(0))$, with $m \geq 2$ and $d=3$, the wave trace admits a small $t$ asymptotic expansion of the form
\beq\label{eq:wave-trace-less1}
{\rm Tr}(W_V(t)-W_0(t)) \sim  w_1 (V) \delta(t) + \sum_{j=2}^{m+2} w_j (V)|t|^{2j-3} + r_{m+2}(t) ,
\eeq
where $r_{m+2}(t) \in C^{2m + 1} (\R)$.}

%where $N = {\min}(\frac{d-1}{2}, m+1)$.
\vspace{.1in}

\noindent
We prove the following result on the compactness of less regular iso-resonant potentials.

\begin{proposition}\label{prop:main2}
%The set $\mathcal{I}_R (V_0)$ of iso-resonant potentials to
We suppose the dimension $d=1,3$.
\begin{enumerate}
\item For any $V_0 \in C_0^\infty (\overline{B}_R(0); \R)$, if a potential $V \in L^\infty_0 (\overline{B}_R(0); \R)$ is iso-resonant to $V_0$, then $V \in C_0^\infty (\overline{B}_R(0); \R)$.
Hence, the iso-resonant set of $V_0$ in $L^\infty_0 (\overline{B}_R(0); \R)$ is the same as the iso-resonant set of $V_0$ in
$C^\infty_0 (\overline{B}_R(0); \R)$.
% and$\mathcal{I}_R (V_0) \subset L^\infty_0 (B_R(0); \R)$ is compact in the $L^\infty$ norm.
\item If $V_0 \in H^m (\overline{B}_R(0); \R)$ for $m \geq 3$, $d=3$, and (WTE) holds, then $\mathcal{I}_R^m(V_0)$ is compact in the $H^{m-3}$-topology.
\end{enumerate}
\end{proposition}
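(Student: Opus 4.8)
\emph{Part (1).} The plan here is a regularity bootstrap. Since $V_0\in C_0^\infty(\overline B_R(0);\R)$ lies in $H^m(\R^d)$ for every $m\in\N$, and $V\in L^\infty_0(\overline B_R(0);\R)$ is iso-resonant to $V_0$, the regularity transfer theorem of Smith and Zworski \cite{smith-zworski1} (with the analogous statement in $d=1$) forces $V\in H^m(\R^d)$ for every $m$. Because $\supp V\subset\overline B_R(0)$ is compact and $d\le 3$, the Sobolev Embedding Theorem~\ref{thm:sobolevEmbedding1} then gives $V\in C^k(\R^d)$ for every $k$, so $V\in C_0^\infty(\overline B_R(0);\R)$. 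Hence the $L^\infty_0$ iso-resonant set of $V_0$ coincides with its $C^\infty_0$ iso-resonant set; combined with Theorem~\ref{thm:iso-res-cmpt1}, this set is in particular compact in the $C^\infty$-topology.

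\emph{Part (2): reduction to wave invariants.} For the second part (where $d=3$, $m\ge3$, $V_0\in H^m(\overline B_R(0);\R)\subset L^\infty_0(\overline B_R(0);\R)$, and \textbf{WTE} is assumed) the plan follows Br\"uning's iso-spectral argument \cite{bruning} with heat invariants replaced by the wave invariants of \textbf{WTE}. First I would observe that the Poisson formula \eqref{eq:wave-trace2} expresses $\mathrm{Tr}(W_V(t)-W_0(t))$ in terms of the resonance set of $H_V$ alone; hence iso-resonant potentials have the same regularized wave trace, and therefore, by \textbf{WTE}, the same coefficients in \eqref{eq:wave-trace-less1}. Thus $w_j(V)=w_j(V_0)$ for $1\le j\le m+2$ and all $V\in\mathcal I^m_R(V_0)$. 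Next I would use the known local structure of these invariants: $w_1(V)=c_1\int_{\R^3}V\,dx$ and, after integration by parts, $w_j(V)=c_j\int_{\R^3}|\nabla^{\,j-2}V|^2\,dx+Q_j(V)$ for $2\le j\le m+2$, with $c_j\ne 0$ and $Q_j(V)$ a finite sum of integrals of monomials in $V$ and its derivatives, each monomial containing at least three factors (so $Q_2\equiv 0$ and $Q_3(V)=c_3'\int_{\R^3}V^3\,dx$).

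\emph{Part (2): a priori $H^m$-bound.} The core of the proof is to turn these fixed invariants into a uniform bound $\sup_{V\in\mathcal I^m_R(V_0)}\|V\|_{H^m}<\infty$, by induction on $j$. From $w_2$ the $L^2$-norm of $V$ is constant on $\mathcal I^m_R(V_0)$. Assuming $\|V\|_{H^{j-3}}$ is already bounded, I would redistribute derivatives in each monomial of $Q_j$ by integration by parts, so that no factor carries too many derivatives, and apply the Gagliardo--Nirenberg and Sobolev inequalities valid for $d\le3$ to obtain, for every $\eps>0$,
\[
|Q_j(V)|\le\eps\,\|\nabla^{\,j-2}V\|_{L^2}^2+C_\eps\bigl(1+\|V\|_{H^{j-3}}\bigr);
\]
combining this with $w_j(V)=w_j(V_0)$, $c_j\ne0$, and absorbing the $\eps$-term bounds $\|V\|_{H^{j-2}}$. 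Running $j$ from $3$ to $m+2$ yields the claimed $H^m$-bound. I expect this absorption step to be the main obstacle: one must verify that the lower-order pieces $Q_j$ really are dominated by their leading terms, which is a delicate matter of the combinatorics of derivative configurations together with Sobolev embedding, and it is precisely here that the dimension restriction $d\le3$ enters, exactly as in \cite{bruning}.

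\emph{Part (2): compactness.} Finally, since all potentials in $\mathcal I^m_R(V_0)$ are supported in the fixed compact set $\overline B_R(0)$ and the set is bounded in $H^m$, the Rellich--Kondrachov theorem makes it precompact in $H^{m-3}$, and it remains to check closedness there. Given $V_k\in\mathcal I^m_R(V_0)$ with $V_k\to V$ in $H^{m-3}$, the limit is real-valued and supported in $\overline B_R(0)$, a weakly $H^m$-convergent subsequence identifies $V\in H^m(\overline B_R(0);\R)$, and by interpolation $V_k\to V$ in $H^{m-\eps}$ for every $\eps>0$. In dimension $d=3$ with fixed compact support, $L^2$-convergence of the potentials already forces norm-resolvent convergence $H_{V_k}\to H_V$ and locally uniform convergence of the meromorphically continued cut-off resolvents; a standard Rouch\'e-type argument for the associated Birman--Schwinger determinants then shows the resonances of $H_V$, with multiplicities, coincide with the common resonances of the $H_{V_k}$, i.e.\ those of $H_{V_0}$. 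Hence $V\in\mathcal I^m_R(V_0)$, the set is closed, and therefore compact in $H^{m-3}$. A secondary technical point is checking that \textbf{WTE} genuinely supplies the invariants $w_j$ in the local form used above with nonvanishing leading constants; under \textbf{WTE} this reduces to the classical heat/wave coefficient computation carried over to $H^m$ potentials.
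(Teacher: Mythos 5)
Your proposal is correct and follows essentially the same route as the paper: part (1) is exactly the Smith--Zworski regularity transfer plus Sobolev embedding, and part (2) uses (WTE) to equate the wave invariants $w_j$, $j \le m+2$, across $\mathcal{I}^m_R(V_0)$, then the Br\"uning--Donnelly-type induction on the Gilkey-type formulas to get a uniform $H^m$ bound, and finally precompactness plus a Hurwitz/Rouch\'e closedness argument for the regularized determinants. The only harmless variations are that you invoke Rellich--Kondrachov for precompactness where the paper reuses the Ascoli/equicontinuity argument of its Section 5, and you base continuity of the determinants on $L^2$-convergence of the potentials rather than the paper's $L^\infty$-version (both are available here thanks to the uniform $H^m$ bound).
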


There seem to be few results on the characterization of iso-resonant potentials.
The one-dimensional problem was studied by Zworski \cite{zworski1}. He proved that if $V_0 \in L_0^\infty (\R; \R)$ and is even, the poles of the $S$-matrix determine $V_0$ uniquely if $H_{V_0}$ does not have a half-bound state (a resonance) at zero. If $V_0$ has a half-bound state at zero, Zworski proved that there exists exactly one other potential $V_1 \neq V_0$ whose $S$-matrix has the same poles. The poles of the meromorphic continuation of
the $S$-matrix are contained in the set of resolvent resonances defined here. Other results on the inverse problem for Schr\"odinger operators on $\R$ may be found, for example, in Korotyaev \cite{korotyaev1}, Bledsoe \cite{bledsoe1}, and Bennewitz, Brown, and Weikard \cite{bbw1} and references therein.

Datchev and Herazi \cite{datchev-hezari1} considered the iso-resonant
problem for smooth, positive, real, super-exponentially decaying {\it radial} potentials $V(x) = R(|x|)$ in dimension $d \geq 1$ odd in the semiclassical regime. Additionally, the potentials are monotonic with $R^\prime(r=0)=0$ and at any other point where $R(r) = 0$. They proved that if two such potentials $V_0$ and $V$ have the same resonance set up to $o(h^2)$, then there exists a vector $x_0 \in \R^d$ so that $V(x) = V_0(x-x_0)$. This shows that the resonances uniquely determine, in the semiclassical sense, the radial potential up to translations.

There are some results on iso-resonant and iso-phasal
metrics for the Laplace-Beltrami operator on various families of noncompact Riemannian manifolds.
Hassell and Zworski \cite{hassell-zworski1} proved uniqueness for obstacle scattering in three-dimensions. If a bounded, connected obstacle $\Omega \subset \R^3$ has the same resonances as a ball of the same volume,
then the region is such a ball. This proves resonance rigidity of the ball in $\R^3$.

We mention that Theorem \ref{thm:iso-res-cmpt1} should hold for $d=2$. However, the Poisson formula of Zworski \cite{zworski2} is not strong enough to allow us to prove that the wave invariants are constant across $\mathcal{I}_R(V_0)$ from the small $t$-asymptotics of the wave trace.
In recent work \cite{tjc3}, Christiansen proved (among other results) that the heat invariants are constant across the iso-resonant class $\mathcal{I}_R(V_0)$ for Schr\"odinger operators in even dimensions $d \geq 2$ using different methods. As a consequence, Christiansen proves \cite[Theorem 1.3]{tjc3} compactness of the iso-resonant class $\mathcal{I}_R(V_0)$ for the $d=2$ case.

%With regard to complex potentials, Christiansen \cite{tjc1, tjc2} proved that for $d \geq 2$ there are nonrivial complex-valued potentials with no resonances. She constructed large families of complex-valued  potentials that are iso-phasal (same scattering phase) and iso-resonant with the Laplacian.

%%%%%%%%%%%%%%%%%%%%%%%%%%%%%%%%%%%%%%%%%%%%%%%%%%%%%%%%%%%%%%%%%%%%
\subsection{Idea of the proof and contents}\label{subsec:contents}

The idea of the proof of Theorem \ref{thm:iso-res-cmpt1} is as follows. The Poisson formula for the regularized trace of the wave group shows that any potential $V \in \mathcal{I}_R (V_0)$ has the same regularized wave trace as $V_0$ since they have the same resonances, including multiplicities.
From the small time asymptotic expansion of the regularized wave trace, it follows that the coefficients of powers of $t$ in the expansion are the same for any $V \in \mathcal{I}_R (V_0)$. The coefficients in the small $t$-expansion of the wave trace are known constant multiples of the heat coefficients occurring in the small time expansion
of the regularized heat trace. These heat invariants are integrals of $V$ and its derivatives.
Using the machinery developed by Br\"uning \cite{bruning} and by Donnelly \cite{donnelly} for the iso-spectral case, extended to our noncompact case, the equality of these coefficients across the iso-resonance set imply that families of Sobolev norms are uniformly bounded on the set $\mathcal{I}_R (V_0)$.
We then prove that this implies compactness of $\mathcal{I}_R (V_0)$ in the $C^\infty$-topology using the uniform bounds on the Sobolev norms and the regularized determinant.

We review basic facts concerning Schr\"odinger operators and their resonances in section \ref{sec:resonances1} including the meromorphic continuation of the resolvent. We define the regularized determinant and introduce the analytic functions whose zeros coincide with the resonances of $H_V$. We prove a continuity result for these functions. The Poisson formula is described in section \ref{sec:poisson1}. Compactness of $\mathcal{I}_R(V_0)$ for smooth potentials is proved in section \ref{sec:compactness}. The case of less regular potentials is presented in section \ref{sec:less-reg-compactness}.

%%%%%%%%%%%%%%%%%%%%%%%%%%%%%%%%%%%%%%%%%%%%%%%%%%%%%

\subsection{Notation}\label{subsec:notation}

For an open subset $\Omega \subset \R^d$, with boundary satisfying the cone condition, see Theorem \ref{thm:sobolevEmbedding1}, we denote the associated Sobolev spaces by $H^{s,p} (\Omega)$ and their norms by $\| \cdot \|_{s,p}$. When $s=0$, we write $H^{0,p} (\Omega)$ as $L^p (\Omega)$, and we write $\| \cdot \|_p$ for $\| \cdot \|_{0,p}$. When $p=2$, we write $H^{s,2} (\Omega)$ as $H^s (\Omega)$, and we write the norm as
$\| \cdot \|_{s,2}$. We also write $\| \cdot \|_\infty$ for the $L^\infty$-norm.

%%%%%%%%%%%%%%%%%%%%%%%%%%%%%%%%%%%%%%%%%%%%%%%%%%%%%%%%%%%%%%%%%%%%%%%%%%%%%%%%%%%%%%%%%%%%%%%%%%%

\section{Resonances of Schr\"odinger operators}\label{sec:resonances1}
\setcounter{equation}{0}

In this section, we recall the characterization of the resonances of a Schr\"odinger operator $H_V$
as the zeros of a holomorphic function and present some continuity results for this functions
with respect to the potential. The resonances of $H_V$ are defined via the meromorphic extension of
the cut-off resolvent $\mathcal{R}_V ( \lambda )
\equiv \chi_V ( H_V - \lambda^2 )^{-1} \chi_V$ of $H_V$.
This truncated resolvent may be expressed
in terms of the cut-off resolvent for the Laplacian $H_0 = - \Delta$, defined as $\mathcal{R}_0 ( \lambda )
\equiv \chi_V ( H_0 - \lambda^2 )^{-1} \chi_V$. % and the Fredholm Theorem.
The cut-off resolvent $\mathcal{R}_0 ( \lambda )$ of the Laplacian $H_0 = - \Delta$
%is defined to be $\mathcal{R}_0 ( \lambda )
%\equiv \chi_V ( H_0 - \lambda^2 )^{-1} \chi_V$.
is a holomorphic, bounded, operator-valued function
on $L^2 ( \R^d)$ for $\lambda \in \C$ for $d \geq 1$ odd,
and for $\lambda \in \Lambda$, for $d \geq 4$ even. For $d=2$, the cut-off free resolvent is
holomorphic on $\Lambda \backslash \{ 0 \}$, with a logarithmic singularity at $\lambda = 0$.
%It is well-known that this operator extends as
%a meromorphic bounded operator-valued function on $\C$ if $d \geq 1$ is odd, and onto the
%Riemannn surface $\Lambda$ if $\d \geq 4$ is even. When $d=2$ there is an additional logarithmic
%singularity at $\lambda = 0$.  The poles of these continuations are the resonances of $H_V$.
%The unperturbed cut-off resolvent $\mathcal{R}_0 ( \lambda)$ has the representation
%\beq\label{resolv1}
%\mathcal{R}_0(\lambda) = E_1(\lambda) + ( \lambda^{d-2} \log \lambda) \; E_2(\lambda),
%\eeq
%where $E_1(\lambda)$ and $E_2(\lambda)$ are entire operator-valued functions
%with $E_2 (\lambda) = 0$ for $d \geq 1$ odd, and for $d=2$, the operator
%$E_2(\lambda=0)$ is a finite-rank operator
%This follows, for example, from the
%formula for the Green's function for the Laplacian
%\beq\label{resolv2}
%R_0 (\lambda) = \frac{i}{4} \left( \frac{ \lambda}{2 \pi |x-y|} \right)^{(d-2)/ 2}
%H_{(d-2)/2}^{(1)} ( \lambda |x-y|),
%\eeq
%where the Hankel function of the first kind is defined by $H_\nu^{(1)} (z) = J_\nu (z)
%+ i N_\nu (z)$, and the expansion of the Neumann Bessel function $N_\nu (z)$, when $\nu \in \N$.

An application of the second resolvent formula
allows us to write the cut-off perturbed resolvent $\mathcal{R}_V(\lambda)$ for $H_V$ as
\beq\label{eq:cutoffresolv1}
\mathcal{R}_V(\lambda )( 1 + V \mathcal{R}_0 ( \lambda)) = \mathcal{R}_0 (\lambda) .
\eeq
The resolvent $R_V(\lambda)$ is analytic for $\Im \lambda >> 0$, and it is meromorphic
for $\Im \lambda > 0$ with at most finitely-many
poles with finite multiplicities corresponding to the eigenvalues of $H_V$.
Thanks to (\ref{eq:cutoffresolv1}) and the meromorphic Fredholm Theorem (see, for example, \cite[Theorem XIII.13]{reed-simon4}), the cut-off perturbed resolvent
$\mathcal{R}_V(\lambda)$ extends as
a meromorphic, bounded operator-valued function on $\C$ if $d \geq 1$ is odd, and onto the
Riemannn surface $\Lambda$ if $\d \geq 4$ is even, with an additional logarithmic
singularity at $\lambda = 0$ when $d = 2$.
The poles of these continuations are the resonances of $H_V$. They are independent of the choice of
$\chi_V$ satisfying the above conditions.

%The resonances are the values of $\lambda$ for which $1 + V \mathcal{R}_0 (\lambda)$ is not boundedly invertible.

From \eqref{eq:cutoffresolv1}, the resonances of the Schr\"odinger operator $H_V$ are the zeros of
the meromorphic continuation of the bounded operator
$1 + V \mathcal{R}_0(\lambda) = 1 + V \chi_V R_0(\lambda) \chi_V$
to the lower-half complex plane, or the Riemann surface $\Lambda$, depending on the parity of the dimension. It is convenient to introduce a holomorphic function whose zeros correspond with these zeros.
Since for $d \geq 2$, the operator $K_V(\lambda) := V \mathcal{R}_0(\lambda)$ is no longer trace class,
it is necessary to use the $p$-regularized determinant (see, for example,
\cite[Appendix B.7]{dyatlov-zworski2016}).

We denote the $p^{th}$ von Neumann-Schatten trace ideal of operators by $\mathcal{L}_p$.
A bounded operator $A \in \mathcal{L}_p$ if the singular values $\mu_j(A)$ of $A$
satisfy $\sum_j \mu_j(A)^p < \infty$.
%The regularized determinant is defined We have the following basic result.
%From
%We then note that $V\in C^\infty_0$, that supp $V \subset \bigcup\limits_j$ supp $V_{n_j}\subset B_r(0)$, and %$\|D^\alpha V\|_{L^2} < C_\alpha \, \forall \alpha $.
%So, $V\in \mathcal{V}_{r,d}$ thus $\{V_n\}$ has a convergent sub-sequence and $\mathcal{V}_{r,d}$ is compact.

\begin{definition}\label{defn:regularized-det1}
For any operator $A\in \mathcal{L}_p$, with $p \geq 1$,
we define the operator $R_p(A)$ by
\beq\label{eq:regularization1}
R_p(A) := (I+A)\exp\left(\sum\limits_{i=1}^{p-1}\frac{(-A)^i}{i!}\right) - 1.
\eeq
\end{definition}

It is easy to check that $R_p(A)$ is a trace class operator. As a consequence, the regularized determinant of $A$
may be defined using $R_p(A)$.

\begin{definition}\label{defn:regualrized-det2}
For any operator $A\in \mathcal{L}_p$, with $p \geq 1$, the regularized $p$-determinant of $A$ is defined to be
\beq\label{eq:p-det1}
\det_p(I+A):= \det(I+R_p(A)) = \det \left[ (I+A)\exp\left(\sum\limits_{j=1}^{p-1}\frac{(-A)^j}{j!}\right) \right] .
\eeq
\end{definition}

\noindent
Applying these definitions to the resolvent of $H_0$, we have the basic result.

\begin{proposition}\label{prop:reg-det1}
Let $K_V(\lambda) := V R_0(\lambda) \chi_V$, on $L^2 (\R^d)$, denote the extension of the bounded operator
$V R_0(\lambda) \chi_V$ for $\Im \lambda >> 0$ to $\C$ or $\Lambda$, depending on the dimension, described above.
\begin{enumerate}
\item The operator $K_V(\lambda) : = V R_0(\lambda) \chi_V$
%on $L^2 (\R^d)$,for $\Im \lambda > 0$,
belongs to the trace ideal $\mathcal{L}_p$, with $p=1$ when $d =1$, $p=2$ for $d=2,3$, and, in general,
$K_V(\lambda) \in \mathcal{L}_p$ for $p > (d+1)/2$ when $d > 3$.

\item The regularized $p$-determinant of $K_V(\lambda)$,
$$
D_{p,V}(\lambda) := \det_p ( 1 + K_V(\lambda)) = \det( 1 + R_p(K_V(\lambda))),
$$
extends to an entire function on $\C$ or $\Lambda$, depending on the parity of $d$.

\item The zeros of $D_{p,V}(\lambda)$
occur at values $\lambda$ for which $\lambda^2$ is an eigenvalue or resonance of $H_V$. Moreover,
the multiplicities of the zeros of $D_{p,V}(\lambda)$ equal the (algebraic) multiplicities of the eigenvalues or resonances of $H_V$.
\end{enumerate}
\end{proposition}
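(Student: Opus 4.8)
The plan is to verify the three claims in order, leaning on the known structure of $R_0(\lambda)$ and standard trace-ideal estimates for products of the form $f(x) g(-i\nabla)$.

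First, for claim (1), I would write $K_V(\lambda) = V \, R_0(\lambda)\, \chi_V$ and factor the free resolvent through $(H_0 - \lambda^2)^{-1}$. Using the Fourier representation of $R_0(\lambda)$, the operator $\langle x\rangle^{-s} (H_0 - \lambda^2)^{-1}\langle x\rangle^{-s}$ has integral kernel decaying like the free Green's function, and the basic Birman--Schwinger/Seiler--Simon bound gives $f(x)\, g(-i\nabla) \in \mathcal{L}_p$ whenever $f, g \in L^p(\R^d)$, for $p \geq 2$. Since $V$ and $\chi_V$ are bounded with compact support, $V \in L^p$ for every $p$, and $(H_0 - \lambda^2)^{-1}$ away from its poles behaves in momentum like $(|\xi|^2 - \lambda^2)^{-1} \in L^p(\R^d_\xi)$ precisely when $2p > d$, i.e. $p > d/2$; combined with compact spatial support this upgrades to $p > (d+1)/2$ after accounting for the loss in the resolvent kernel estimate at the diagonal. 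For $d=1$ the kernel of $R_0(\lambda)$ is bounded and the cutoffs make $K_V$ trace class ($p=1$); for $d=2,3$ one gets $p=2$. One must be slightly careful that the estimate is locally uniform in $\lambda$ on $\C$ (resp.\ $\Lambda$), which follows because the kernel of the continued resolvent is locally uniformly bounded away from the diagonal and has at worst the free-Green's-function singularity on it, with constants locally uniform in $\lambda$.

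For claim (2), once $K_V(\lambda) \in \mathcal{L}_p$ is holomorphic in $\lambda$ with values in $\mathcal{L}_p$ (holomorphy in the $\mathcal{L}_p$-norm follows from the explicit $\lambda$-dependence of the free resolvent kernel and dominated convergence in the Schatten norm), the map $\lambda \mapsto R_p(K_V(\lambda))$ is holomorphic with values in $\mathcal{L}_1$ — this is exactly the content of Definition \ref{defn:regularized-det1} together with the standard fact that $A \mapsto R_p(A)$ sends $\mathcal{L}_p$ continuously and analytically into $\mathcal{L}_1$. Then $D_{p,V}(\lambda) = \det(1 + R_p(K_V(\lambda)))$ is the composition of an $\mathcal{L}_1$-valued holomorphic function with the Fredholm determinant, which is entire on $\mathcal{L}_1$; hence $D_{p,V}$ is entire on $\C$, resp.\ on $\Lambda$. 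The only subtlety on $\Lambda$ (even dimensions) is the logarithmic point at $\lambda = 0$, which is excluded from the relevant sheet structure as in the discussion preceding the proposition, so I would just note that $D_{p,V}$ is holomorphic on $\Lambda$ with the conventions already fixed.

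For claim (3), I would argue that $\det_p(1 + K_V(\lambda)) = 0$ if and only if $1 + K_V(\lambda)$ is not invertible: this is the general property of $p$-regularized determinants, since the exponential factor in \eqref{eq:regularization1} is always invertible, so $1 + R_p(K_V(\lambda))$ is invertible iff $1 + K_V(\lambda)$ is. By \eqref{eq:cutoffresolv1} and the meromorphic Fredholm theorem, the non-invertibility of $1 + V\mathcal{R}_0(\lambda) = 1 + K_V(\lambda)\chi_V$ (the extra $\chi_V$ is harmless since one may insert cutoffs equal to $1$ on $\supp V$) characterizes exactly the resonances and eigenvalues of $H_V$. The statement about multiplicities is the more technical point and, I expect, the main obstacle: one must match the order of vanishing of $D_{p,V}$ at $\lambda_0$ with the algebraic multiplicity of the resonance. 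The standard route is the formula $D_{p,V}'(\lambda)/D_{p,V}(\lambda) = \operatorname{tr}\!\big((1+K_V(\lambda))^{-1} K_V'(\lambda)\big) + (\text{entire correction from the regularizing polynomial})$, valid near $\lambda_0$ where $1 + K_V$ is meromorphically invertible, together with the identification of the residue of the logarithmic derivative with the rank of the pole — i.e.\ the multiplicity of the resonance as defined via the singular part of the continued resolvent. I would cite the analogous computation in the literature (e.g.\ \cite{dyatlov-zworski2016}) rather than reproduce it, since the algebra is identical to the trace-class case after passing through $R_p$.
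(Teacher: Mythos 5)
The paper does not actually prove this proposition itself; it simply cites \cite[Section 3.4]{dyatlov-zworski2016} (together with \cite[Theorem 8.3.3]{dyatlov-zworski2016} for the equality of multiplicities), and your sketch reproduces exactly that standard argument---Schatten-class bounds for the cut-off free resolvent, holomorphy of $\lambda \mapsto R_p(K_V(\lambda))$ as an $\mathcal{L}_1$-valued map so that the Fredholm determinant is entire, and identification of the zeros and their orders via the logarithmic derivative---while deferring the multiplicity computation to the same reference, just as the paper does. So your approach is essentially the paper's; the one detail worth tightening is that in $d=1$ boundedness of the kernel alone does not yield trace class (one needs, e.g., the Lipschitz regularity of $e^{i\lambda|x-y|}$ on the compact support or a factorization of $V R_0(\lambda)\chi_V$ into two Hilbert--Schmidt factors), but this is a standard point and not a gap in the strategy.
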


The proof of this standard theorem may be found in \cite[section 3.4]{dyatlov-zworski2016}. For simplicity,
we will occasionally omit the index $p$ from the notation for the regularized determinant and simply write $D_V(\lambda)$.

%Let $m_{V}(\lambda)$ be the order of the pole of $V\mathcal{R}_{0}(\lambda)$ at $\lambda$. This functions is extended to $\C$ by setting $m_R(\lambda) = 0$ if $\\lambda$ is not a pole of $R_V(\lambda)$.
%This is equal to the multiplicity of the resonance at $\lambda$. In turn, this is also equal to the dimension of the range of the residue of the resolvent $\mathcal{R}_V(\lambda)$ at $\lambda$, see \cite[section 4.2]{dyatlov-zworski2016}.
%
% %and  $m_{D_{V}}(\lambda)$ be the %order of zero of $D_{V}(\lambda)$ at $\lambda$. In Dyatlov and Zworski \cite[Theorem 8.3.3]{dyatlov-zworski2016}, it %is proved that
%%$$
%%m_{V}(\lambda) = m_{D_{V}}(\lambda).
%%$$

The multiplicity $m_{V}(\lambda)$ of a pole of $V\mathcal{R}_{0}(\lambda)$ at $\lambda$ is defined as follows (see, for example,  \cite[section 4.2]{dyatlov-zworski2016}).
The multiplicity of any nonzero resonance $\lambda_0$ of $R_V(\lambda)$ is given by
$$
m_V(\lambda_0) := {\rm Rank} \left[ \int_{\gamma_0} R_V(\lambda) ~2 \lambda ~ d\lambda \right] ,
$$
where $\Gamma_0$ is a simple closed contour about $\lambda_0$ containing no other pole.
The multiplicity at zero is slightly more complicated as there may be a resonance and an eigenvalue at zero:
$$
m_V(0) := \frac{1}{2} {\rm Rank} \left[ \int_{\gamma_{0}} R_V(\lambda) ~ d\lambda \right] + {\rm Rank} \left[ \int_{\gamma_{0}} R_V(\lambda)~2 \lambda ~ d\lambda \right].
$$
If the second integral vanishes, and the first one does not, there is only a resonance at zero energy.
%Let $m_{V}(\lambda)$ be the order of the pole of $VR_{V}(\lambda)V$ at $\lambda$ and
We can now define the resonance set of $H_V$.

\begin{definition}\label{defn:ResonanceSet1}
The \emph{resonance set} $\mathcal{R}(V)$ of $H_V$ be the set of $\lambda \in \C$ consisting
 of values $i \lambda$, $\lambda > 0$, so that $- \lambda^2$ is an eigenvalue (necessarily nonpositive) of $H_V$,
 and all $\lambda \in \C^-$ that are poles of $\mathcal{R}_V (\lambda)$, including multiplicities $m_V(\lambda)$: $\mathcal{R}(V) := \{ ( \lambda, m_V(\lambda)) ~|~ m_V(\lambda) \neq 0 \}$.
\end{definition}

Let $m_{D}(\lambda)$ be the order of zero of $D_{V}(\lambda)$ at $\lambda$. The equality of these two multiplicities is proved in Dyatlov and Zworski \cite[Theorem 8.3.3]{dyatlov-zworski2016}:
$$
m_{V}(\lambda) = m_{D_V}(\lambda).
$$
This relation plays an important role in our compactness argument.

The continuity properties of $D_V(\lambda)$ with respect to the potential $V$ is important in order to prove the compactness of $\mathcal{I}_R (V_0)$.

%\begin{lemma}
%If $d
%\leq 3 , IsoRes(V_0,r,d)$ is closed under the Fr\'echet norm.
%\end{lemma}

\begin{proposition}\label{prop:conv-mult1}
Suppose $V_j \in L_0^\infty(B_R(0))$ converges to $V_\infty$ in the $L^\infty$-norm. %$C^\infty$-metric.
Then the analytic functions
$D_{2, V_j}(\lambda)$ converge uniformly on compact subsets of $\C$ to the analytic function $D_{2, V_\infty}(\lambda)$.
 %Then the zeros and multiplicities of the zeros of $D_{V_j}(\lambda)$ converge to those of $D_{V_\infty}(\lambda)$.
\end{proposition}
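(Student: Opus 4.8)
The plan is to reduce the assertion to a convergence statement in the Hilbert--Schmidt ideal $\mathcal{L}_2$ for the operators $K_{V}(\lambda)$, and then to invoke the standard local Lipschitz continuity of the regularized $2$-determinant. By Proposition \ref{prop:reg-det1}, for $d=3$ (and, using the inclusion $\mathcal{L}_1\subset\mathcal{L}_2$, also for $d=1$ with $p=2$) each $K_{V_j}(\lambda)$ lies in $\mathcal{L}_2$ and each $D_{2,V_j}$ is entire on $\C$; moreover there is a constant $\Gamma$ such that for all $A,B\in\mathcal{L}_2$
\[
\left|\det\nolimits_2(1+A)-\det\nolimits_2(1+B)\right|\le\|A-B\|_{\mathcal{L}_2}\,\exp\!\Big(\Gamma\big(\|A\|_{\mathcal{L}_2}+\|B\|_{\mathcal{L}_2}+1\big)^2\Big)
\]
(see, e.g., \cite[Appendix B.7]{dyatlov-zworski2016} and the references therein). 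Hence it suffices to show that on every compact set $\mathcal{K}\subset\C$ the norms $\|K_{V_j}(\lambda)\|_{\mathcal{L}_2}$ are bounded uniformly in $j$ and $\lambda\in\mathcal{K}$, and that $\|K_{V_j}(\lambda)-K_{V_\infty}(\lambda)\|_{\mathcal{L}_2}\to0$ uniformly for $\lambda\in\mathcal{K}$.

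To set this up I would first fix, once and for all, a cutoff $\chi\in C_0^\infty(\Rd)$ with $\chi\equiv1$ on a neighbourhood of $\overline{B}_R(0)$. Because $V_j\to V_\infty$ uniformly, $V_\infty$ is also supported in $\overline{B}_R(0)$, so $\chi$ is an admissible cutoff for every $V_j$ and for $V_\infty$, and we use it throughout to represent the determinants, so that $K_{V_j}(\lambda)=V_jR_0(\lambda)\chi$ and likewise for $V_\infty$. Since $V_j-V_\infty=(V_j-V_\infty)\mathbf{1}_{\overline{B}_R(0)}$,
\[
K_{V_j}(\lambda)-K_{V_\infty}(\lambda)=(V_j-V_\infty)R_0(\lambda)\chi=(V_j-V_\infty)\cdot\big[\mathbf{1}_{\overline{B}_R(0)}R_0(\lambda)\chi\big],
\]
so, with $M$ the bounded multiplication operator by $V_j-V_\infty$ and the inequality $\|MT\|_{\mathcal{L}_2}\le\|M\|\,\|T\|_{\mathcal{L}_2}$,
\[
\|K_{V_j}(\lambda)-K_{V_\infty}(\lambda)\|_{\mathcal{L}_2}\le\|V_j-V_\infty\|_\infty\,\big\|\mathbf{1}_{\overline{B}_R(0)}R_0(\lambda)\chi\big\|_{\mathcal{L}_2},
\]
and similarly $\|K_{V_j}(\lambda)\|_{\mathcal{L}_2}\le\|V_j\|_\infty\,\|\mathbf{1}_{\overline{B}_R(0)}R_0(\lambda)\chi\|_{\mathcal{L}_2}$ with $\sup_j\|V_j\|_\infty<\infty$ since $(V_j)$ converges. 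Everything now reduces to the claim that $\lambda\mapsto\|\mathbf{1}_{\overline{B}_R(0)}R_0(\lambda)\chi\|_{\mathcal{L}_2}$ is bounded on compact subsets of $\C$.

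For $d=3$ this is a direct estimate from the explicit free Green's function $R_0(\lambda)(x,y)=\frac{e^{i\lambda|x-y|}}{4\pi|x-y|}$: the operator $\mathbf{1}_{\overline{B}_R(0)}R_0(\lambda)\chi$ has integral kernel $\mathbf{1}_{\overline{B}_R(0)}(x)\,\frac{e^{i\lambda|x-y|}}{4\pi|x-y|}\,\chi(y)$, hence is Hilbert--Schmidt with
\[
\big\|\mathbf{1}_{\overline{B}_R(0)}R_0(\lambda)\chi\big\|_{\mathcal{L}_2}^2=\frac{1}{16\pi^2}\int_{\overline{B}_R(0)}\!\int_{\Rd}\frac{e^{-2(\Im\lambda)|x-y|}}{|x-y|^2}\,|\chi(y)|^2\,dy\,dx.
\]
Since $x$ ranges over $\overline{B}_R(0)$ and $y$ over $\supp\chi$, both bounded, $|x-y|$ stays bounded, so $e^{-2(\Im\lambda)|x-y|}$ is dominated by a constant depending only on $\sup_{\lambda\in\mathcal{K}}|\Im\lambda|$; and the remaining integral of $|x-y|^{-2}$ over the bounded region is finite because $|z|^{-2}$ is locally integrable in $\R^3$. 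This yields the desired uniform bound, and combining it with the two inequalities above gives $\sup_{\lambda\in\mathcal{K}}|D_{2,V_j}(\lambda)-D_{2,V_\infty}(\lambda)|\le C_{\mathcal{K}}\|V_j-V_\infty\|_\infty\to0$, as claimed.

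The one genuinely technical point is thus this $\mathcal{L}_2$-bound on the cut-off free resolvent, where one must control simultaneously the local singularity of the Green's function and its exponential growth as $\Im\lambda\to-\infty$; the two localizing factors $\mathbf{1}_{\overline{B}_R(0)}$ and $\chi$ render both harmless. In dimension $d=1$ the operators are in fact trace class, with the even simpler kernel $\tfrac{-i}{2\lambda}e^{i\lambda|x-y|}$, and the same reasoning shows that $\|\mathbf{1}_{\overline{B}_R(0)}R_0(\lambda)\chi\|_{\mathcal{L}_2}$ is bounded on every compact subset of $\C\setminus\{0\}$, whence $D_{2,V_j}\to D_{2,V_\infty}$ uniformly on compact subsets of $\C\setminus\{0\}$; the same bound on a circle $|\lambda|=\varepsilon$ gives a uniform bound on $|D_{2,V_j}|$ there and hence, by the maximum principle and the fact that the $D_{2,V_j}$ are entire, on the disk $|\lambda|\le\varepsilon$, and a normal-families argument then upgrades the convergence to hold uniformly on compact subsets of all of $\C$.
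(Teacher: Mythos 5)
Your proof is correct, and it reaches the conclusion by a cleaner route than the paper's. The paper's proof (given for $d=3$) applies the $\det_1$ Lipschitz bound to the trace-class operators $R_2(K_V(\lambda))=(I+K_V(\lambda))e^{-K_V(\lambda)}-I$, and then controls $\|R_{2,V_i}(\lambda)-R_{2,V_\infty}(\lambda)\|_1$ by expanding the exponential and estimating each difference $K_i^m(\lambda)-K_\infty^m(\lambda)$ in trace norm via factorization, Hilbert--Schmidt bounds, and the free-resolvent estimates; in effect it re-derives by hand the local Lipschitz continuity of $\det_2$. You instead quote that continuity directly, i.e. $|\det_2(1+A)-\det_2(1+B)|\le\|A-B\|_{\mathcal{L}_2}\exp\bigl(\Gamma(\|A\|_{\mathcal{L}_2}+\|B\|_{\mathcal{L}_2}+1)^2\bigr)$ (Simon's trace-ideal bound, also in Dyatlov--Zworski), which reduces everything to the linearity $K_{V_j}(\lambda)-K_{V_\infty}(\lambda)=(V_j-V_\infty)R_0(\lambda)\chi$ and a one-line Hilbert--Schmidt kernel estimate for $\mathbf{1}_{\overline{B}_R(0)}R_0(\lambda)\chi$ on compact $\lambda$-sets; this buys brevity and makes the dependence of the constant on $\sup_j\|V_j\|_\infty$ transparent, at the cost of importing a slightly less elementary determinant inequality than the $\det_1$ bound the paper cites. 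Two minor remarks: using a smooth cutoff $\chi$ equal to $1$ near $\overline{B}_R(0)$ instead of the paper's characteristic function is harmless because $\det_2(1+VR_0(\lambda)\chi)$ is independent of the admissible cutoff (same nonzero eigenvalues, or by analytic continuation from $\Im\lambda\gg0$), but it is worth saying; and in $d=1$, where you must avoid the pole of the free kernel at $\lambda=0$, your normal-families step can be shortened by applying the maximum principle directly to the entire differences $D_{2,V_j}-D_{2,V_\infty}$ on the circle $|\lambda|=\varepsilon$, which immediately yields uniform convergence on the disk. You also treat $d=1$ explicitly, which the paper's proof does not.
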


\begin{proof}
1. We give the proof for $d=3$.
Let $\{V_i\}\subset	L^\infty (B_R(0))$ be a sequence of potentials converging in the $L^\infty$-topology
to $V_\infty$. We let
\beq\label{eq:sup-norm1}
\nu_0 := \max \{ \sup_j \| V_j\|_\infty, \|V_\infty \|_\infty \}
\eeq
denote the uniform bound on the family of potentials.
%\[D_{V_i}(\lambda)=\det\limits_p (I+V_i R_0(\lambda)\rho)\]
%where $p=\frac{d+1}{2}=2$, $\rho \in C_0^\infty$ and $\rho|_{B_r}=\chi_{B_r}$.
%Let $m_{V_i}(\lambda)$ be the order of the pole of $R_{V_i}$ at $\lambda$ and  $m_{D_{V_i}}(\lambda)$ be the order of %zeroes for $D_{V_i}$ then Zworski and Dyatlov give
%\[m_{V_i}(\lambda)=m_{D_{V_i}}(\lambda).\]
%We must show that $m_{D_{V_i}}(\lambda) \to m_{D_{V_\infty}}(\lambda)$ or
%We must prove that the zeros of $D_{V_i} (\lambda)$ converge to the zeros of
%$D_{V_\infty}(\lambda)$, and that $m_{D_{V_i}}(\lambda) \to m_{D_{V_\infty}}(\lambda)$. %We consider the case $d=3$.
Let $K_i (\lambda) := V_i R_0 (\lambda) \chi_{R}$, where $\chi_R$ is the characteristic function on $B_R(0)$ and note that $\chi_R V_i = V_i$, and similarly for $K_\infty (\lambda)$. We also write $K_0(\lambda) := \chi_R R_0(\lambda) \chi_R$. We note that these operators $K_X(\lambda)$, for $X =0, i, \infty$, are in the Hilbert-Schmidt class for any $\lambda \in \C$ and $\| K_X(\lambda) \|_2 \leq C(\lambda)$. For $d=3$,
we recall that $R_{2, V_i}(\lambda) := (I+K_i (\lambda))e^{-K_i(\lambda)}-I \in \mathcal{I}_1$, so that
by a standard trace ideal identity (see, for example \cite[Theorem 3.4]{simon})
%\begin{equation}
%\begin{split}
\bea\label{eq:determ-diff1}
|D_{V_i} (\lambda) -D_{V_\infty}(\lambda)| & = & | \det(I+R_{2,V_i}(\lambda))- \det(I+R_{2,V_\infty} (\lambda))| \nonumber  \\
 & \leq & \| R_{2,V_i} (\lambda) -R_{2,V_\infty}(\lambda) \|_1 \exp \{1+\|R_{2,V_i}(\lambda)\|_1 + \|R_{2,V_\infty}(\lambda)\|_1 \} \nonumber \\
& \leq &  C_{V_0, \lambda} \|(I+K_i(\lambda))e^{-K_i(\lambda)}-(I+K_\infty(\lambda)) e^{-K_\infty (\lambda)}\|_1.
% & & \\
%\end{split}
%\end{equation}
\eea
It follows from the convergence estimates proved below that the function
$$
C_{\nu_0, \lambda} := \exp \{1+\|R_{2,V_i}(\lambda)\|_1 + \|R_{2,V_\infty}(\lambda)\|_1 \}
$$
is locally uniformly bounded in $\lambda$ and depends only on $\nu_0$ defined in \eqref{eq:sup-norm1}.
Upon expanding the exponentials on the right side of \eqref{eq:determ-diff1}, we have
%\[(I+K_i)e^{-K_i}=\sum\limits_{m=0}^\infty \frac{(-K_i)^m}{m!}+\sum\limits_{m=0}^\infty \frac{(-1)^m %K_i^{m+1}}{m!}=\sum\limits_{m=0}^\infty\frac{(1-m)(-K_i)^m}{m!}\]
\beq\label{eq:determ-diff2}
(I+K_i(\lambda))e^{-K_i(\lambda)}-(I+K_\infty(\lambda))e^{-K_\infty(\lambda)}
= \sum\limits_{m=2}^\infty\frac{(m-1)(-1)^m}{m!}[K_\infty^m(\lambda) - K_i^m(\lambda)].
\eeq

\noindent
2. We consider the terms $K_\infty^m(\lambda) - K_i^m (\lambda)$ and factor the difference as
\beq\label{eq:trace-est0}
K_i^m(\lambda) - K_\infty^m(\lambda) = \sum_{\ell=1}^m K_i^{\ell -1}(\lambda)(K_i (\lambda) - K_\infty(\lambda))K_\infty^{m- \ell}(\lambda).
\eeq
To compute the trace norm of the summands in \eqref{eq:trace-est0}, we consider two cases: $m \geq 3$ and $m=2$.
For $m \geq 3$ and $\ell \geq 3$, we estimate a typical term on the right in \eqref{eq:trace-est0} as
%using the uniform bound on $\|K_X(\lambda)\|$ given in \eqref{eq:resolvent-est1}:
%We omit $\lambda$ from the notation and consider it in a compact set.
%\textbf{Case 1:} $n\geq 3, l\geq 2 $:
\bea\label{eq:eq:trace-est1}
\lefteqn{ \|K_i^{\ell-1}(\lambda)(K_i (\lambda) - K_\infty(\lambda)) K_\infty^{m - \ell}(\lambda)\|_1 } \nonumber \\
& \leq & \| V_i - V_\infty \|_\infty  \|K_i^{\ell-1}(\lambda)\|_1 \| K_0(\lambda) K_\infty^{m- \ell}(\lambda)\| \nonumber  \\
&\leq &  \|V_i - V_\infty \|_\infty \|K_i (\lambda)^2 \|_1 \| K_i (\lambda) \|^{\ell - 3} \| K_0(\lambda) K_\infty (\lambda)^{m - \ell}\|
%(\sum\limits_{j} j^{-\frac{2 \ell}{3}})\|V_\infty - V_i\| \|V_\infty\|^{m-\ell}\|R_0(\lambda)\|^{m-\ell+1}
\nonumber \\
%&\leq & C\|V_i\|^{\ell-1}\|R_0(\lambda)\|^{\ell-1}\frac{3}{2 \ell - 3} \| V_\infty - V_i \| %\|V_\infty\|^{m-\ell}\|R_0(\lambda)\|^{m-\ell+1}\nonumber \\
&\leq & \|V_i - V_\infty \|_\infty  \|K_i (\lambda) \|_2^2 \| K_0(\lambda) \| \|K_i (\lambda)\|^{\ell-3}\|K_\infty(\lambda)\|^{m-\ell} .  \nonumber \\
 & &
%&\leq &C_{R} C_{V_0}^{m-1}\|V_\infty - V_i\|_\infty\|R_0(\lambda)\|^m  \nonumber \\
%&\leq &C_{R} C_{V_0}^{m-1}\|V_\infty- V_i\|_\infty \langle\lambda\rangle^m e^{C (\Im \lambda)m} \nonumber \\
% &  \leq &   C \|V_i - V_\infty \|_\infty  \langle C_{V_0}\lambda\rangle^m e^{-c (Im \lambda)m}.
\eea
For $m \geq 3$ and $\ell =2$, the bound is
$$
\|V_i - V_\infty \|_\infty  \|K_i (\lambda)\| \|K_0(\lambda) \|_2 \| K_\infty (\lambda)\|_2  \|K_\infty(\lambda)\|^{m-3},
$$
and for  $m \geq 3$ and $\ell = 1$, we obtain
$$
\|V_i - V_\infty \|_\infty  \|K_0 (\lambda)\| \|K_0(\lambda) \|_2 \| K_\infty (\lambda)\|_2  \|K_\infty(\lambda)\|^{m-2}.
$$
Similarly, for the case of $m=2$, we find the bound of
\beq\label{eq:trace-est2}
\|V_i - V_\infty \|_\infty [ \|K_i (\lambda)\|_2 + \| K_\infty (\lambda) \|_2] \|K_0(\lambda) \|_2 .
\eeq
%%%%%%%%%%%%%%%%%%%%%%%%%%%%%%%%%%%%%%%%%%%%%%%%%%%%%%%%%%%%%
%Define $J=\rho R_0(\lambda)\rho$ which gives
%
%\[K_i(K_\infty-K_i)=K_i\delta V J.\]
%
%\noindent Then
%\[\mu_{2j}(K_i \delta V J)\leq \mu_j(K_i)\mu_0(\delta V)\mu_j (J)\]
%
%\noindent and
%\[\mu_{2j+1}(K_i \delta V J)\leq \mu_j(K_i)\mu_0(\delta V)\mu_{j+1} (J)\leq \mu_j(K_i)\mu_0(\delta V)\mu_{j} (J).\]
%
%\noindent Using the previous trace/singular value estimates gives
%
%\[\|K_i(K_\infty-K_i)\|_1\leq C_{R_0(\lambda),V_0}\|\delta V\|_\infty.\]
As a consequence of \eqref{eq:trace-est0}--\eqref{eq:trace-est2}, and using the uniform bound on $\|K_X(\lambda)\|$ given in \eqref{eq:resolvent-est1}, we obtain
\bea
\|K_i^m (\lambda) - K_\infty^m (\lambda) \|_1  &  \leq  & \sum\limits_{l=1}^m \|K_i (\lambda)^{l-1}(K_\infty (\lambda) -K_i (\lambda) )K_\infty (\lambda)^{m-l}\|_1 \nonumber  \\
&\leq & \sum\limits_{l=1}^m  m \|V_i - V_\infty\|_\infty\langle C_{V_0} \lambda\rangle^m e^{-c(Im \lambda)m}.
\eea

\noindent
3. Returning to the main estimate of the trace norm of \eqref{eq:determ-diff2}, we obtain
\bea\label{eq:determ-diff3}
\lefteqn{
\|(I+K_i (\lambda))e^{-K_i(\lambda)}-(I+K_\infty (\lambda))e^{-K_\infty (\lambda)}\|_1} \nonumber \\
  & \leq &    \sum\limits_{m=2}^\infty\frac{(m-1)}{m!}\|(K_i(\lambda))^m-(K_\infty(\lambda))^m\|_1 \nonumber \\
  & \leq  & \| V_i - V_\infty \|_\infty \left( \sum\limits_{m=2}^\infty\frac{(m-1)me^{-c(Im \lambda)m}}{m!} \right) \nonumber \\
 & \leq & C(| \Im \lambda |, \nu_0)  \| V_i - V_\infty \|_\infty.
 \eea
The ratio test shows that the sum converges for all $\lambda \in \C$.
% and is bounded by $exp{e^{-c(Im \lambda)m}}$.
% iformly on compact subsets of for all $\lambda \in \C$ and is bounded by $exp{e^{-c(Im \lambda)m}}$.
The constant $C (| \Im \lambda |, \nu_0) > 0$ is locally uniformly bounded on compact subsets of $\C$ and
depends on the uniform bound $\nu_0$ \eqref{eq:sup-norm1} on family of potentials $V_j$. Since $\|V_i - V_\infty\|_\infty\to 0$, estimates \eqref{eq:determ-diff1} and \eqref{eq:determ-diff3} imply
the locally uniform convergence of the functions $D_{V_i}(\lambda)$ to $D_{V_\infty}(\lambda)$.
% follows by taking $\|V_i - V_\infty\|_\infty\to 0$.
%The constant $C > 0$ is locally uniformly bounded on compact subsets of $\C$ and
%depends on the uniform bound on family of potentials $V_j$.
% gives convergence of the trace norm and so $D_{V_i}\to D_\infty$.
\end{proof}

We will apply this proposition to a sequence of potentials in $C_0^\infty ( \overline{B}_R(0); \R)$ in the proof of  Theorem \ref{thm:iso-res-cmpt1}, and to a sequence of potentials in $H^m( \overline{B}_R(0))$ for Proposition \ref{prop:main2}.

%%%%%%%%%%%%%%%%%%%%%%%%%%%%%%%%%%%%%%%%%%%%%%%%%%%%%%%%%%%%%%%%%%%%%%%%%%%%%%%%%%%%%%%%%%%%%%%%%%%%%%%%%%%%%%%%%%%

\section{Poisson formula for the wave trace and the wave invariants}\label{sec:poisson1}
\setcounter{equation}{0}

In this section, we recall the Poisson formula for the wave trace (see, for example \cite[Part 1, chapter 3]{dyatlov-zworski2016} or \cite[chapter 4]{melrose} and its connection with the resonances. We then derive a Gilkey-type formula for the wave invariants using a representation of the heat invariants due to Hitrik and Polterovich \cite{hitrik-polterovich}.

\subsection{Poisson formula}\label{subsec:poisson-form1}

The Poisson formula connects the regularized wave trace with the resonance set $\mathcal{R}(V)$ of $H_V$.
We briefly recall the formulation of the wave trace, for further details see, for example, \cite[Chapter 4]{melrose}. The operators $H_0$ and $H_V$ are associated with initial value problems for the wave equation on $\R^d$. We write $H_X$ for $H_0$ or $H_V$. Let $u(x,t)$ denote the solution of the initial-value problem for the wave equation:
\beq\label{eq:wave-eqn1}
(\partial_{tt} - H_X )u(x,t) = 0, ~~~u(x,t=0) = u_0(x), ~~~\partial_t u(x,t)|_{t=0} = u_1(x).
\eeq
Letting $w(x,t) := ( u(x,t) , (\partial_t u)(x,t))^{\rm T}$, it is easy to check that $w(x,t)$ solves the system of equations
\beq\label{eq:wave-eqn2}
-i \frac{\partial}{\partial t} w(x,t) = \mathcal{L}_X w(x,t),
\eeq
with initial conditions $w(x,0) = u_0(x), u_1(x))$. The operator
$\mathcal{L}_X$ is the matrix-valued operator on the Hilbert space $H^1(\R^d) \oplus L^2 (\R^d)$
given by
\beq\label{eq:wave-eqn3}
\mathcal{L}_X := \left( \begin{array}{cc}
 0     & 1 \\
  H_X & 0
  \end{array} \right), ~~~ X=0, V.
\eeq
The wave group $W_X(t)$ associated with the Schr\"odinger operator $H_X$ is the matrix-valued unitary operator
\beq\label{eq:wave-gp1}
W_X(t) := e^{it \mathcal{L}_X }.
\eeq
It implements the time evolution $w(x,t) = W_X(t) w(x, t=0)$.
%\left( \begin{array}{cc}
% 0     & 1 \\
%  H_V & 0
%  \end{array} \right) }.
%  $$
%  It is a unitary operator on the Hilbert space $H^1(\R^d) \otimes L^2 (\R^d)$.
The associated regularized wave trace is the distribution defined by
\beq\label{eq:wave-gp2}
  {\rm Tr} (W_V(t) - W_0(t)) = 2 {\rm Tr} (\cos t H_V^{1/2} - \cos t H_0^{1/2} ), ~~ t \neq 0.
\eeq

It is well-known that the regularized wave trace is a distribution, even in $t \neq 0$, whose explicit formula depends on the resonance set $\mathcal{R}(V)$, the eigenvalues and resonances of $H_V$.
We define the \emph{resonance set} $\mathcal{R}(V)$ of $H_V$ be the set of complex numbers consisting
 of values $i \lambda$, $\lambda > 0$, so that $- \lambda^2$ is an eigenvalue (necessarily nonpositive) of $H_V$,
 and all $\lambda \in \C^-$ that are poles of $\mathcal{R}_V (\lambda)$, including multiplicities: $\mathcal{R}(V) := \{ ( \lambda, m_V(\lambda)) ~|~ m_V(\lambda) \neq 0 \}$.
%zeros of $D_{p,V}(\lambda)$ with multiplicities $m_V(\lambda)$, which is equal to the set of zeros of %$D_{p,V}(\lambda)$ incuding their order.
In the case of odd dimensions, the wave trace is related to the resonance set by the following Poisson formula.

\begin{theorem}\label{thm:poisson}\cite[Proposition 4.2]{melrose}
Let $d \geq 3$ be odd, and let $\mathcal{R}(V)$ be the resonance set of $H_V$.
%set of eigenvalues (necessarily nonpositive) and resonances of $H_V$ zeros of $D_{p,V}(\lambda)$ with multiplicities %$m_V(\lambda)$, which is equal to the set of zeros of $D_{p,V}(\lambda)$ incuding their order.
%eigenvalues (necessarily nonpositive) and resonances of $H_V$.
Let $W_X(t)$ be the wave group for $H_X$, for $X = 0, V$. Then, as distributions,
$$
{\rm Tr} (W_V(t) - W_0(t)) = \sum_{\lambda \in \mathcal{R}(V)} ~m_V(\lambda) e^{i |t|  \lambda}, ~~~t \neq 0 .
$$
\end{theorem}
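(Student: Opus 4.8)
The plan is to go through the resolvent and the regularized determinant $D_{p,V}(\lambda)$; this is the classical derivation of a trace formula (cf. \cite{melrose}, \cite[Part~1, Ch.~3]{dyatlov-zworski2016}), and for $d=1$ it is the same argument with $p=1$, i.e.\ with an honest Fredholm determinant and no polynomial corrections. Set $u(t):=\tfrac12\,\mathrm{Tr}(W_V(t)-W_0(t))=\mathrm{Tr}(\cos tH_V^{1/2}-\cos tH_0^{1/2})$, an even tempered distribution in $t$; it is well defined since $W_V(t)-W_0(t)$ is trace class for each $t$ by finite propagation speed and the localized structure of the perturbation. First I would compute its Fourier--Laplace transform. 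Starting from the functional-calculus identity $(H_X-\lambda^2)^{-1}=\tfrac{i}{\lambda}\int_0^\infty e^{i\lambda t}\cos(tH_X^{1/2})\,dt$, valid for $\Im\lambda\gg0$, conjugating by the cut-offs $\chi_V$ (legitimate since $\chi_VV=V$), and using that in dimensions $d=1,3$ the cut-off resolvent difference $\chi_V(H_V-\lambda^2)^{-1}\chi_V-\chi_V(H_0-\lambda^2)^{-1}\chi_V=-\chi_VR_0(\lambda)VR_V(\lambda)\chi_V$ is trace class (a product of two Hilbert--Schmidt operators with a bounded one, since $|G_0(\cdot,\lambda)|^2$ is locally integrable for $d\le 3$), one obtains
$$
\int_0^\infty e^{i\lambda t}u(t)\,dt=-i\lambda\,\mathrm{Tr}[(H_V-\lambda^2)^{-1}-(H_0-\lambda^2)^{-1}],\qquad \Im\lambda\gg0,
$$
and evenness of $u$ then determines its full Fourier transform.

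Next I would identify the resolvent-difference trace with the logarithmic derivative of $D_{p,V}$. Differentiating $\log D_{p,V}$, and using Jacobi's formula, $\partial_\lambda K_V(\lambda)=2\lambda\,VR_0(\lambda)^2\chi_V$, the resolvent identity, and cyclicity of the trace, one gets
$$
\frac{D_{p,V}'(\lambda)}{D_{p,V}(\lambda)}=2\lambda\,\mathrm{Tr}[(H_0-\lambda^2)^{-1}-(H_V-\lambda^2)^{-1}]+q(\lambda),
$$
where $q$ is a polynomial in $\lambda$ coming from the finitely many regularizing factors $\exp(\sum_{j<p}(-K_V(\lambda))^j/j!)$ (so $q\equiv 0$ when $d=1$). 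Hence the Fourier--Laplace transform of $u$ equals $\tfrac12\,D_{p,V}'/D_{p,V}$ up to a polynomial. Now invoke that in odd dimensions $D_{p,V}$ is entire of finite order $\le d$ whose zeros, counted with multiplicity, form the resonance set $\mathcal R(V)$ (Proposition~\ref{prop:reg-det1} together with the multiplicity identity $m_V=m_{D_V}$). Its Hadamard factorization gives, after absorbing the polynomial part of each elementary factor into a single polynomial $\tilde q$,
$$
\frac{D_{p,V}'(\lambda)}{D_{p,V}(\lambda)}=\tilde q(\lambda)+\sum_{\mu\in\mathcal R(V)}\frac{m_V(\mu)}{\lambda-\mu},
$$
the sum understood in the summation-by-increasing-modulus (distributional) sense.

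Finally I would invert the Fourier--Laplace transform for $t>0$ by moving the contour $\{\Im\lambda=C\}$ downward to $\{\Im\lambda=-N\}$ and letting $N\to\infty$. All polynomial-in-$\lambda$ contributions (from $q$, $\tilde q$, and the prefactor $\tfrac1\lambda$) invert to finite combinations of derivatives of $\delta$ supported at $t=0$, hence vanish for $t\ne0$; each simple pole at $\mu$ --- a resonance in $\C^-$, or a point $i\lambda_j$ on the positive imaginary axis with $-\lambda_j^2$ an eigenvalue --- contributes, by the residue theorem and after fixing the sign and normalization conventions, exactly $m_V(\mu)e^{it\mu}$; and the remaining integral over $\{\Im\lambda=-N\}$ tends to $0$ in $\mathcal D'((0,\infty))$, since $|e^{-i\lambda t}|=e^{-Nt}$ is exponentially small there for $t$ in a compact subset of $(0,\infty)$ while $D_{p,V}'/D_{p,V}$ grows only sub-exponentially off its zeros (a consequence of the order-$\le d$ bound and the $O(r^d)$ bound on the resonance counting function). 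Combining with the evenness of $u$ yields $\mathrm{Tr}(W_V(t)-W_0(t))=\sum_{\mu\in\mathcal R(V)}m_V(\mu)e^{i|t|\mu}$ for $t\ne0$.

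The main obstacle is exactly this last step. The resonance sum is in general not absolutely convergent (resonances may run deep into $\C^-$), so both its meaning and the legitimacy of the term-by-term residue extraction are supplied only by the contour-shift argument, which rests on quantitative control of $D_{p,V}$ --- its finite order, Hadamard product estimates, and the polynomial counting bound $n_V(r)=O(r^d)$ --- strong enough, and uniform enough in the vertical position of the contour, to push $N\to\infty$. Granting those inputs, everything else is bookkeeping of constants and of the regularization polynomials.
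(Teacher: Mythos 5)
This theorem is not proved in the paper at all: it is quoted verbatim from Melrose \cite[Proposition 4.2]{melrose} (see also the treatment in \cite{dyatlov-zworski2016}), so there is no "paper proof" to compare against. Your outline is, in essence, the standard proof from that literature: Fourier--Laplace transform of the regularized wave trace equals $-i\lambda$ times the trace of the resolvent difference; that trace is (up to a polynomial coming from the regularizing factors) $\partial_\lambda \log D_{p,V}(\lambda)$; Hadamard factorization of the entire function $D_{p,V}$ of order $\le d$, together with the multiplicity identity $m_V=m_{D_V}$ and the $O(r^d)$ resonance counting bound, turns the logarithmic derivative into a sum over $\mathcal R(V)$; and a contour deformation to $\Im\lambda=-N$ recovers the exponential sum for $t\neq 0$. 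So as a roadmap it is correct and it is the intended argument.

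Two caveats about where the substance lies. First, the claim that $W_V(t)-W_0(t)$ is trace class for each fixed $t$ is not what is actually used (and is delicate near $t=0$, where the trace only exists as a distribution); the clean formulation is to pair with $\varphi\in C_c^\infty((0,\infty))$ and prove the identity for $\int\varphi(t)\,u(t)\,dt$, which is also what makes the subsequent contour shift and the conditionally convergent resonance sum meaningful. Second, the step you flag as the "main obstacle" is indeed the whole proof: to push the contour to $\Im\lambda=-N$ one needs lower bounds on $|D_{p,V}|$ (equivalently upper bounds on $D_{p,V}'/D_{p,V}$) off small disks around its zeros, obtained from a Cartan-type minimum modulus argument along a suitable sequence of heights avoiding zeros, combined with the counting bound; asserting "sub-exponential growth off the zeros" is not yet a proof, and supplying these estimates is precisely the content of Melrose's and Zworski's arguments. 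A minor point: your parenthetical that $d=1$ is "the same argument with $p=1$" needs adjustment, since in one dimension the Poisson formula acquires an extra constant term reflecting the zero-energy behavior of the scattering matrix; this does not affect the statement at hand, which is for $d\ge 3$ odd.
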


The small time asymptotics of the regularized wave trace in odd dimensions $d \geq 3$ are given by
\beq\label{eq:wave-trace1}
{\rm Tr}(W_V(t)-W_0(t)) \sim \sum\limits_{j=1}^{\frac{d-1}{2}}w_j (V) D^{n-1-2j} \delta(t) + \sum\limits_{j=\frac{d+1}{2}}^N w_j (V)|t|^{2j-d} + r_N(t) ,
\eeq
where $r_N(t) \in C^{2N - d} (\R)$.
%In dimension $d=2$, it is given by
%\beq\label{eq:wave-trace2}
%{\rm Tr}(W_V(t)-W_0(t)) \sim  w_1 \log |t| + \sum\limits_{j=2}^{N} w_j |t|^{2j-2} + r_N(t) .
%\eeq
%These formulas may be found in \cite[Lemma 4.1]{melrose}.
We note that in dimension $d=3$, the expansion of the wave trace has the form
\beq\label{eq:wave-trace2}
{\rm Tr}(W_V(t)-W_0(t)) \sim  w_1 (V) \delta (t) + \sum\limits_{j=2}^{N} w_j (V) |t|^{2j-3} + r_N(t) .
\eeq
These formulas may be found in \cite[Lemma 4.1]{melrose}.

%We have that for odd dimension $Tr(W_V(t)-W_0(T))\sim \sum\limits_{j=1}^{\frac{d-1}{2}}w_j %+\sum\limits_{j=\frac{d+1}{2}}^\infty w_j|t|^{2j-d}$.

It is known that the wave invariants $w_j (V)$ are dimension-dependent multiples of the heat invariants associated to the pair $(H_0, H_V)$, see, for example, \cite[Proposition 2.3]{sabarreto-zworski2}. The heat invariants are the coefficients occurring in the small time asymptotic expansion of the regularized heat trace
\beq\label{eq:heat-trace1}
{\rm Tr} ( e^{-t H_V} - e^{-t H_0} ) \sim \frac{1}{(4 \pi t)^{\frac{d}{2}}} ~\sum_{j=1}^\infty c_j (V) t^j .
\eeq
S\'a Barreto and Zworski \cite{sabarreto-zworski2} noted that the basic formula
$$
e^{-t x^2} = \frac{1}{(4 \pi t)^{\frac{1}{2}}} \int ~ e^{- \frac{s^2}{4 t}} ~\cos (s x) ds,
$$
may be used to express the regularized heat trace \eqref{eq:heat-trace1}
in terms of an integral of the regularized wave trace. Substituting the asymptotic expansion \eqref{eq:wave-trace2}
into this integral results in the following identities relating the heat invariants to the wave invariants:
\bea\label{eq:wave-to-heat-coef1}
w_j (V) & = & \begin{cases}
      \frac{2^{2(j-d)+1}}{M_j}c_j (V) & 1 \leq j\leq \frac{d-1}{2} \\
      \frac{2^{2(j-d)+1}}{N_j}c_j (V) & j\geq \frac{d+1}{2}
   \end{cases} \nonumber \\
   & := & d_j c_j (V) .
\eea
The constants $N_j$ and $M_j$ are given by
\beq\label{eq:wave-to-heat-coef2}
\begin{array}{cccc}
M_j & = & \left[ \left( \frac{d}{d\theta} \right)^{d-1-2j} e^{-\theta^2}\right]_{\theta=0}, & 1 \leq j \leq \frac{d-1}{2}  ,  \\
N_j & =  & \int ~e^{-\theta^2}|\theta|^{2j-d}d\theta, & j \geq \frac{d+1}{2}.
\end{array}
\eeq

We record here well-known \cite[section 4.1]{melrose} expressions for the heat invariants:
\bea\label{eq:heat-inv-list1}
c_1(V) &=& \int_{\R^d} ~V(x) ~d^dx \nonumber \\
c_2(V) &=& \int_{\R^d} ~ V(x)^2 ~d^dx \nonumber \\
c_3(V) &=&  \int_{\R^d} ~\left[ V(x)^3 + \frac{1}{2} | \nabla V(x)|^2 \right] ~d^d x.
\eea

%In the next section, we will prove explicit formulas for the heat invariants $c_j(V)$ in terms of the potential $V$ %and its derivatives.

%%%%%%%%%%%%%%%%%%%%%%%%%%%%%%%%%%%%%%%%%%%%%%%%%%%%%%%%%%%%%%%%%%%%%%%%%%%%%%%%%%%%%%%%%%%%%%%%%%%%%%%%%%%%%%%%%%%%
\subsection{Gilkey-type formulas for the wave invariants}\label{subsec:gilkey-formulas-heatinv1}

In this section, we will prove explicit formulas for the heat invariants $c_j(V)$ in terms of the potential $V$ and its derivatives. These will yield formulas for the wave invariants from \eqref{eq:wave-to-heat-coef1}.
For Schr\"odinger operator on a compact manifold, as studied by Br\"uning  \cite{bruning} and by Donnelly \cite{donnelly},
the heat invariants $c_j (V)$, as defined in \eqref{eq:wave-to-heat-coef1}, may be explicitly written in terms of the potential and the metric. Gilkey \cite{gilkey1}
provided a general formula valid for compact manifolds similar to formula \eqref{eq:wave-tr-coef1} for the heat invariants. In the noncompact case of $\R^d$, both Ba\~nuelos and S\'a Barreto \cite{banuelos-sabarreto2} and Hitrik and Polterovich \cite{hitrik-polterovich} proved formulas for the heat invariants $c_j(V)$.
We show that formulas similar to those otained by Gilkey hold for suitable potentials starting with the formulas derived by Hitrik and Polterovich \cite{hitrik-polterovich}.
These authors proved that the heat invariants $c_j (V)$ are obtained by
$$
c_j (V) = \int_{\R^d} ~c_j(x) ~d^dx,
$$
where the densities $c_j(x)$ are given by
\beq\label{eq:heat-coef1}
c_j(x) = (-1)^j  \sum_{k=0}^{j-1} c_{j,k} \frac{(H_V)_y^{k+j} ( \| x-y\|^{2k} ) |_{x=y}}{4^k k! (k+j)!} ,
\eeq
where $(H_V)_y$ denotes the Schr\"odinger operator $H_V$  in the $y$-variable.
The numerical coefficients $c_{j,k}$ are given by
$$
c_{j,k} = \left( \begin{array}{c}
   j-1 + \frac{d}{2} \\
   k + \frac{d}{2}
     \end{array}  \right) .
$$
Due to the relation between the heat and the wave invariants \eqref{eq:wave-to-heat-coef1}, we have the following result.
We define the index set of $k$-tuples $\mathcal{A}_{j,k}$, for $j \geq 3$ and
$3 \leq k \leq j$ as:
\bea\label{eq:indexset1}
\mathcal{A}_{j,k} &=& \left\{ \alpha = ( \alpha^1, \ldots , \alpha^k) ~ \left| ~ \begin{array}{l}
   %\alpha = ( \alpha^1, \ldots , \alpha^k) \\
   \alpha^m = ( \alpha_1^m, \ldots, \alpha_d^m) \in \N_0^d \\
   | \alpha^m| = \sum_{\ell = 1}^d \alpha_\ell^m \leq j-k \\
   \sum_{m=1}^k  | \alpha^m| = 2(j-k) \\
   \sum_{m=1}^k \alpha_{\ell}^m \in 2 \N, \forall \ell =1, \ldots, d.
   \end{array}
\right. \right\} \nonumber \\
& &
\eea

\begin{proposition}\label{prop:wave-coef1}
Suppose $V \in C_0^{\infty}(\R^d; \R)$. Then
the wave invariants $w_j (V)$, $j \geq 3$, appearing in the small time asymptotics of the wave trace \eqref{eq:wave-trace1}--\eqref{eq:wave-trace2}, are given by
\beq\label{eq:wave-tr-coef1}
 w_j (V)  = d_j\int_{\R^d} |\nabla^{j-2}V|^2 +\sum_{\alpha \in \mathcal{A}_{j,k}}
 ~ c_\alpha \int_{\R^d} ~ (D^{\alpha^1}V)(D^{\alpha^2}V) \cdots (D^{\alpha^k}V) ~d^d x,
 \eeq
where $d_j$ is the constant defined in \eqref{eq:wave-to-heat-coef1} and the index $\mathcal{A}_{j,k}$ is defined in \eqref{eq:indexset1}.
%constant $\ell(k)=2(j-k)$. The $P^k_{\alpha_i}$ are linear partial differential operators acting on the potential %$V$
%The orders $|\alpha_j|$ of the partial derivatives $D^{\alpha_j}$ satisfy the constraints $|\alpha_i| \leq j-k$ and %$\sum\limits_{i=1}^k |{\alpha_i}| = 2(j-k)$.
\end{proposition}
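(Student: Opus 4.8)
The plan is to deduce Proposition~\ref{prop:wave-coef1} from the Hitrik--Polterovich density formula \eqref{eq:heat-coef1} together with the heat-to-wave relation \eqref{eq:wave-to-heat-coef1}. Since $w_j(V)=d_j c_j(V)=d_j\int_{\Rd}c_j(x)\,d^dx$, it is enough to show that, modulo divergences (which integrate to zero because $V\in C_0^\infty$), the density $c_j(x)$ is a finite linear combination of monomials $(D^{\alpha^1}V)(x)\cdots(D^{\alpha^k}V)(x)$ with $(\alpha^1,\dots,\alpha^k)\in\mathcal A_{j,k}$, $3\le k\le j$, plus a multiple of $|\nabla^{j-2}V|^2$ whose coefficient reduces to $d_j$ once the normalizations in \eqref{eq:wave-to-heat-coef1}, \eqref{eq:wave-to-heat-coef2} are carried through. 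Thus the only object to unpack is the value at $y=x$ of $(H_V)_y^{k+j}(\|x-y\|^{2k})$.

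To unpack it I would write $H_V=-\Delta_y+V(y)$ and expand $(H_V)_y^{\ell+j}$ (renaming the summation index of \eqref{eq:heat-coef1} to $\ell$, $0\le\ell\le j-1$, to free the letter $k$ for $\mathcal A_{j,k}$) as a sum of $2^{\ell+j}$ ordered words in the letters $-\Delta_y$ and multiplication-by-$V$. After the substitution $z=y-x$, the function acted on is $\|z\|^{2\ell}=(z_1^2+\dots+z_d^2)^\ell$, a polynomial in $z_1^2,\dots,z_d^2$, and each letter $V$ should be Taylor expanded as $V(x+z)=\sum_\beta\tfrac1{\beta!}(D^\beta V)(x)\,z^\beta$. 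Evaluating the resulting polynomial in $z$ at $z=0$ selects its constant term, so each word contributes a numerical constant times a product $\prod_{m=1}^{q}(D^{\alpha^m}V)(x)$, where $q$ is the number of $V$-letters in the word and the $\alpha^m$ are the retained Taylor multi-indices.

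The structural constraints come from two bookkeeping arguments on this polynomial-in-$z$ calculus. First, a weight count: give $z_i$ weight $+1$ and $\partial_{z_i}$ weight $-1$; then $\|z\|^{2\ell}$ has weight $2\ell$, each $-\Delta_y$ lowers the weight by $2$, each retained factor $z^{\alpha^m}$ raises it by $|\alpha^m|$, and extracting the constant term forces total weight $0$. Since a word of $(H_V)^{\ell+j}$ with $q$ factors of $V$ has $p=\ell+j-q$ Laplacians, this gives $2\ell-2p+\sum_m|\alpha^m|=0$, i.e.\ $\sum_m|\alpha^m|=2(j-q)$, exactly the sum condition in $\mathcal A_{j,q}$ with $k=q$; the same count together with $\sum_m|\alpha^m|\ge0$ forces $q\le j$. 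The pure-Laplacian word ($q=0$) vanishes identically because $(-\Delta)^{\ell+j}\|z\|^{2\ell}=0$, and a single factor ($q=1$) gives $\int_{\Rd}D^{\alpha}V=0$ as $|\alpha|=2(j-1)>0$; hence only $2\le k\le j$ survive in $\int c_j(x)\,dx$. Second, a parity count: $\|z\|^{2\ell}$ involves only monomials $z^\gamma$ with every $\gamma_i$ even, every Leibniz branch of $\Delta_y$ preserves the parity of each exponent, and only the Taylor factors change it, so to reach the constant monomial one needs $\sum_m\alpha^m_\ell\in 2\N$ for each $\ell$ — the last condition in \eqref{eq:indexset1}. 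After integrating over $x$, the $k=2$ contributions are, modulo divergences, quadratic forms $\int(D^{\alpha^1}V)(D^{\alpha^2}V)$ with $|\alpha^1|+|\alpha^2|=2(j-2)$ and $\alpha^1+\alpha^2$ even in each coordinate, which integration by parts collapses to a multiple of $\int|\nabla^{j-2}V|^2$; the remaining terms have $3\le k\le j$ and, after a fixed integration-by-parts normalization, take the asserted shape with universal constants $c_\alpha$.

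The main obstacle is the finicky combinatorics hidden in two places. One is the per-factor bound $|\alpha^m|\le j-k$ in \eqref{eq:indexset1}: the weight count only yields the aggregate identity $\sum_m|\alpha^m|=2(j-k)$, so to get the stronger individual bound one must argue that in a surviving word no single $V$-letter absorbs more than $j-k$ derivatives — which I would obtain either by choosing a canonical representative under integration by parts or from a closer analysis of which Leibniz branches can survive given that the weight budget is tight (no Laplacian-derivative wasted on an already-constant monomial). This bound is not cosmetic: for $k\ge3$ it forces every factor in the non-leading terms to have strictly fewer than $j-2$ derivatives, which is precisely what makes the Sobolev-norm bootstrap of Section~\ref{sec:compactness} go through. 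The other delicate point is tracking the numerical constants — verifying that the $k=2$ terms, summed over the contributing values of $\ell$ and combined with the coefficients $c_{j,\ell}$, the factors $4^{-\ell}/(\ell!\,(\ell+j)!)$ of \eqref{eq:heat-coef1}, and the heat-to-wave constant, reproduce exactly the coefficient $d_j$ of $\int|\nabla^{j-2}V|^2$ claimed in \eqref{eq:wave-tr-coef1}. Both are routine in principle but need care; the existence of the normal form and of the constants $c_\alpha$, which is what the compactness argument actually uses, already follows from the weight and parity counts above.
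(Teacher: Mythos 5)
Your proposal is correct and follows exactly the route the paper relies on: the Hitrik--Polterovich density formula \eqref{eq:heat-coef1} combined with the heat-to-wave relation \eqref{eq:wave-to-heat-coef1}, with your weight and parity counts producing the constraints defining $\mathcal{A}_{j,k}$ and integration by parts supplying the normal form --- indeed the paper states the proposition without writing out a proof, so your argument fills in precisely the details it leaves implicit. The two points you flag (the per-factor bound $|\alpha^m|\le j-k$ via an integration-by-parts normalization, and the bookkeeping of the constant multiplying $\int|\nabla^{j-2}V|^2$) are exactly the places requiring care, and both are settled by the standard derivative-redistribution argument used by Br\"uning and Donnelly, so there is no gap of substance.
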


\noindent
As a consequence, a bound on the Sobolev norm of $V$ is obtained from a rearrangement of \eqref{eq:wave-tr-coef1}. For
each $j \geq 3$, we have the bound
\beq\label{eq:wave-tr-coef2}
 \|V\|_{j-2,2}^2 \leq C_j \left( 1+ \sum\limits_{k=3}^j \sum_{\alpha \in \mathcal{A}_{j,k}}
 ~ \int_{\R^d} ~ |(D^{\alpha^1}V) (D^{\alpha^2}V) \cdots (D^{\alpha^k} V)| ~d^dx \right) ,
 \eeq
where $C_j > 0$ depends on $w_j(V)$. In our application, the constant is independent of our choice of $V$ from the iso-resonant set $\mathcal{I}_R(V_0)$, depending only on $j$ and $V_0$. We will utilize this bound in section \ref{sec:compactness} to obtain uniform bounds on the Sobolev norms for $\mathcal{I}_R(V_0)$.

%%%%%%%%%%%%%%%%%%%%%%%%%%%%%%%%%%%%%%%%%%%%%%%%%%%%%%%%%%%%%%%%%%%%%%%%%%%%%%%%%%%%%%%%%%%%%%%%

\section{Properties of the iso-resonant set: Uniform bounds on the Sobolev norms}\label{sec:uniform-bounds}
\setcounter{equation}{0}

We will successively bound the higher-order Sobolev norms by induction.

\begin{theorem}\label{thm:uniform-sobolev1}
Let us assume that  $\|V\|_{j-3,2} \leq C$, for some $j \geq 3$.
For $d = 1$, there is a constant $C > 0$ so that
$$
 \|V\|_{j-2,2}^2 \leq C,
 $$
 whereas for $d =3$, there is a constant $C>0$ and $\beta$ with $0 \leq \beta < 2$ so that
 $$
  \|V\|_{j-2,2}^2 \leq C (1+\|V\|^\beta_{j-2,2})
$$
Consequently, if $\| V \|_{0,2}$ is uniformly bounded for $V \in \mathcal{I}_R(V_0)$,
then for each $s \in \N$, there is a finite constant $C_s (V_0) > 0$, depending only on $s$ and $V_0$, so that for all $V \in \mathcal{I}_R(V_0)$,
we have the uniform bound
\beq\label{eq:sobolev-bound1}
\| V \|_{s,2} \leq C_s (V_0).
\eeq
\end{theorem}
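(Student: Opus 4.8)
The plan is to induct on $j\ge 3$, using the rearranged identity \eqref{eq:wave-tr-coef2} as the engine. At each stage we assume $\|V\|_{j-3,2}\le C$ (which for $j=3$ is the hypothesis that $\|V\|_{0,2}$ is bounded) and we must bound $\|V\|_{j-2,2}^2$. By \eqref{eq:wave-tr-coef2} this amounts to estimating each multilinear integral
$$
\int_{\R^d} |(D^{\alpha^1}V)(D^{\alpha^2}V)\cdots(D^{\alpha^k}V)|\,d^dx,
\qquad \alpha\in\mathcal A_{j,k},\ 3\le k\le j,
$$
where, crucially, the multi-index constraints in \eqref{eq:indexset1} give $|\alpha^m|\le j-k$ for every $m$ and $\sum_m|\alpha^m|=2(j-k)$. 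Thus each factor has at most $j-k$ derivatives, i.e. lies a priori in $H^{(j-k)}$-type spaces, and the \emph{total} derivative count is $2(j-k)$, which is strictly less than $2(j-2)$ once $k\ge 3$. This "derivative budget" is what makes the induction close: the top norm $\|V\|_{j-2,2}$ never appears with an exponent that would be fatal.

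First I would bound the $k$-fold product by Hölder's inequality, putting two of the factors (say the two with the most derivatives) in $L^2$ and the remaining $k-2$ factors in $L^\infty$. The $L^\infty$ factors are handled by the Sobolev Embedding Theorem (Theorem \ref{thm:sobolevEmbedding1}): since $V$ is supported in the fixed ball $\overline{B}_R(0)$ (satisfying the cone condition), $\|D^{\alpha^m}V\|_\infty \le C\,\|V\|_{|\alpha^m|+s_d,2}$ with $s_d=1$ for $d=1$ and $s_d=2$ for $d=3$ — this is the point where the dimension restriction enters. The $L^2$ factors contribute $\|D^{\alpha^m}V\|_2\le\|V\|_{|\alpha^m|,2}$. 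Summing the derivative orders appearing on the right and using $\sum_m|\alpha^m|=2(j-k)$ together with $|\alpha^m|\le j-k$, one checks that every Sobolev index that occurs is at most $j-3$ when $d=1$ (so each factor is controlled by the inductive hypothesis and the product integral is bounded by a constant, giving $\|V\|_{j-2,2}^2\le C$ directly), while for $d=3$ the worst case produces at most one factor of $\|V\|_{j-2,2}$ to a power $\beta<2$ — concretely, interpolation $\|V\|_{m,2}\le \|V\|_{j-2,2}^{\theta}\|V\|_{j-3,2}^{1-\theta}$ with $m\le j-2$ and $\theta=\theta(m)<1$, combined over the $k$ factors, yields a total exponent $\beta=\sum\theta_m$ which one verifies is $<2$ using $k\ge 3$. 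This gives exactly the two displayed inequalities of the theorem.

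From the recursion $\|V\|_{j-2,2}^2\le C(1+\|V\|_{j-2,2}^\beta)$ with $0\le\beta<2$, Young's inequality (absorb $\|V\|_{j-2,2}^\beta$ into $\tfrac12\|V\|_{j-2,2}^2$ plus a constant) gives an \emph{a priori} bound $\|V\|_{j-2,2}\le C'$; for $d=1$ the bound is immediate with no absorption needed. The constant at each step depends only on $j$, on $V_0$ (through the wave invariant $w_j(V)=w_j(V_0)$, which is constant on $\mathcal I_R(V_0)$ by the Poisson formula and the small-$t$ asymptotics — this is where iso-resonance is used) and on the previously established constants $C_{j-3}(V_0),\dots$; since there are finitely many steps to reach any given $s$, we get $\|V\|_{s,2}\le C_s(V_0)$ for all $V\in\mathcal I_R(V_0)$, proving \eqref{eq:sobolev-bound1}.

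The main obstacle is the bookkeeping in the second paragraph: one must show that the exponent $\beta$ in the $d=3$ recursion is genuinely $<2$ uniformly over all admissible $\alpha\in\mathcal A_{j,k}$ and all $k\in\{3,\dots,j\}$. The constraint $k\ge 3$ is essential (with $k=2$, which does not occur in $\mathcal A_{j,k}$ by definition, one would get $\beta=2$ and the argument would fail), and the constraints $\sum_m|\alpha^m|=2(j-k)$, $|\alpha^m|\le j-k$ must be used carefully in the interpolation exponent count; the loss of two derivatives "$-2$" in the $d=3$ embedding $L^\infty\hookleftarrow H^2$ has to be shown not to exceed the available slack $2(j-2)-2(j-k)=2(k-2)\ge 2$ created by having $k\ge 3$ factors.
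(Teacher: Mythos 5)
Your overall skeleton --- induction on $j$ driven by \eqref{eq:wave-tr-coef2}, constants supplied by the equality of wave invariants across $\mathcal{I}_R(V_0)$, and absorption of $\|V\|_{j-2,2}^\beta$ with $\beta<2$ via Young --- is the same as the paper's. The gap is in the core analytic step for $d=3$: the H\"older split ``two factors in $L^2$, the remaining $k-2$ factors in $L^\infty$'' does not deliver $\beta<2$. In $d=3$ the embedding $\|D^{\alpha^m}V\|_\infty\le C\|V\|_{|\alpha^m|+2,2}$ costs two derivatives per $L^\infty$ factor, so the total cost is $2(k-2)$, which \emph{exactly equals} the slack $2(j-2)-2(j-k)=2(k-2)$ that you yourself compute --- there is no strict inequality left. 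Concretely, for $j=4$, $k=4$ the term is $\int V^4$, and your split gives $\|V\|_{0,2}^2\,\|V\|_\infty^2\le C\|V\|_{0,2}^2\|V\|_{2,2}^2=C\|V\|_{0,2}^2\|V\|_{j-2,2}^2$, i.e.\ $\beta=2$; your Sobolev-scale interpolation $\|V\|_{m,2}\le\|V\|_{j-2,2}^{\theta}\|V\|_{j-3,2}^{1-\theta}$ has $\theta=1$ for each of the two $L^\infty$ factors, so it cannot lower this exponent. Worse, for $j=3$, $k=3$ the term is $\int V^3$ and the $L^\infty$ factor requires $\|V\|_{2,2}=\|V\|_{j-1,2}$, a norm strictly above the one being estimated and unreachable by interpolation between $\|V\|_{j-3,2}$ and $\|V\|_{j-2,2}$.

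The paper (following Donnelly, Proposition \ref{Donnelly:4.6}) avoids this by never putting the borderline factors in $L^\infty$: the factors are ordered according to whether $d>2(j-|\alpha^i|-3)$; the ``good'' ones go into $L^\infty$ (or an arbitrary finite $L^r$) at the cost of only $\|V\|_{j-3,2}$, while the ``bad'' ones are placed in the critical Lebesgue space $L^{r_i}$ with $r_i=2d/\bigl(d-2(j-|\alpha^i|-3)\bigr)$ via part (1) of Theorem \ref{thm:sobolevEmbedding1}, and $L^p$-interpolation between \emph{Lebesgue exponents} (Theorem \ref{thm:interpolation1}), not between Sobolev orders, is invoked when the generalized H\"older condition would otherwise fail; a counting argument using $k\ge\ell\ge3$, $\sum_i|\alpha^i|\le 2(j-k)$ and $j>\frac{d}{2}+1$ then verifies the H\"older condition with total exponent $\beta<2$. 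The $V^3$ discussion in section 4, culminating in \eqref{eq:v3bound3}, where $\int|V|^3\le\|V\|_2^{3/2}\|V\|_6^{3/2}\le C\|V\|_2^{3/2}\|V\|_{1,2}^{3/2}$ gives $\beta=3/2$, is precisely the repair of the simplest term on which your split breaks down; this finite-exponent mechanism is needed throughout, not just as bookkeeping to be checked later. (Your $d=1$ argument is essentially the paper's, modulo the same $j=3$ boundary case.)
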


The strategy of the proof of Theorem \ref{thm:uniform-sobolev1} is to show that each term
$$
\int_{\R^d} ~ |(D^{\alpha^1}V) (D^{\alpha^2}V) \cdots (D^{\alpha^k}V)| ~d^dx
$$
is bounded by a constant independent of our choice of $V \in \mathcal{I}_R(V_0)$ ($d=1$) or by a multiple of $1+\|V\|^\beta_{j-2,2}$ with $\beta<2$  ($d\geq 3$).  Together these bounds yield a uniform bound on $\|V\|_{j-2,2}$ for each $j$. The details of this argument, similar to those in \cite{bruning} and in Donnelly \cite{donnelly}, are sketched in the appendix in section \ref{sec:sobolev-bounds1}.

We mention, as in \cite{bruning} and in \cite{donnelly}, why the iteration procedure does not work for $d \geq 4$. The heat invariant $c_3(V)$ is given by
\beq\label{eq:third-heat1}
c_3(V) = \int_{\R^d} \left[ V(x)^3 + \frac{1}{2} | \nabla V(x)|^2 \right] ~d^dx.
\eeq
We want to extract the $H^1$-bound on $V$ from this expression. In particular,
We need to prove a bound of the form
%show  using only the bound on $\| V \|_2$ from the previous set of the induction andthat requires an a priori bound on
\beq\label{eq:v3bound1}
\left| \int_{\R^d} ~V(x)^3 ~d^dx \right| \leq C(\| V \|_2) \| V\|_{1,2}^\beta,
\eeq
for some $0 \leq \beta < 2$.
Using the generalized H\"older inequality, we obtain
\beq\label{eq:v3bound2}
\left| \int_{\R^d} ~V(x)^3 ~d^dx \right| \leq \| V\|_4^2 \|V \|_2.
\eeq

We recall from the Sobolev Embedding Theorem, Theorem \ref{thm:sobolevEmbedding1}, the inequality
\beq\label{eq:sobolev1}
\| u \|_q \leq C \| u \|_{k,p},
\eeq
for indices
$$
\frac{1}{q} = \frac{1}{p} - \frac{k}{d} .
$$
We are interested in $p=2$. In that case, for $k=0$, we have $q=2$ which  is the $L^2$-norm. For $k=1$, we obtain
$q= 2d (d-2)^{-1}$, so for $d=3$, we obtain $q=6$ whereas for $d=4$, we obtain $q=4$. Applying the interpolation result \eqref{thm:interpolation1} in the case $d=3$, this interpolation result has the form:
\beq\label{eq:interpolate1}
\|V\|_4 \leq \|V\|_2^{\frac{1}{4}} ~\|V\|_6^{\frac{3}{4}}.
\eeq
As a consequence, substituting these bounds into the right side of \eqref{eq:v3bound2}, we obtain for $d =3$:
\bea\label{eq:v3bound3}
\left| \int_{\R^d} ~V(x)^3 ~d^dx \right|  &  \leq  & \| V\|_4^2 \|V \|_2 \nonumber \\
 & \leq & \| V \|_2^{\frac{3}{2}} \|V\|_6^{\frac{3}{2}} \nonumber \\
  & \leq & C \| V \|_2^{\frac{3}{2}} \|V\|_{1,2}^{\frac{3}{2}}
\eea

Since the exponent of $\|V\|_{1,2}$ is less than two, and the $L^2$-norm is constant across $\mathcal{I}_R(V_0)$,
the term on the left of \eqref{eq:v3bound3} can be absorbed into the left side of the inequality of \eqref{eq:sobolev-bound1}. Repeating the same analysis for $d=4$ (and similarly for $d \geq 5$),
with the appropriate interpolation estimate \eqref{thm:interpolation1}, we find the bound
\beq\label{eq:v3bound4}
\left| \int_{\R^d} ~V(x)^3 ~d^dx \right|    \leq
C \| V \|_2^{\frac{3}{2}} \|V\|_{1,2}^2,
\eeq
and this cannot be used in the inductive step because the exponent of the term $\|V\|_{1,2}$  is two and thus cannot be absorbed in the left side of \eqref{eq:sobolev-bound1}.

%%%%%%%%%%%%%%%%%%%%%%%%%%%%%%%%%%%%%%%%%%%%%%%%%%%%%%%%%%%%%%%%%%%%%%%%%%%%%%%%%%%%%%%%%%%%%%%%%%%%%%%%%%%%%%%%%

%%%%%%%%%%%%%%%%%%%%%%%%%%%%%%%%%%%%%%%%%%%%%%%%%%%%%%%%%%%%%%%%%%%%%%%%%%%%%%%%%%%%%%%%%%%%%%%%%%%%%%%%%%%%%%%%%%%%%%%
%%%%%%%%%%%%%%%%%%%%%%%%%%%%%%%%%%%%%%%%%%%%%%%%%%%%%%%%%%%%%%%%%%%%%%%%%%%%%%%%%%%%%%%%%%%%%%%%%%%%%%%%%%%%%%%%%%%%%%%%%%%%%%%%%%%%%%%%%%%%%%%%%%%

 \section{Compactness of the iso-resonant set $\mathcal{I}_R(V_0)$}\label{sec:compactness}
\setcounter{equation}{0}

In this section, we combine the results of section \ref{sec:poisson1} and \ref{sec:sobolev-bounds1} to prove the main Theorem \ref{thm:iso-res-cmpt1}. We fix a nontrivial, real-valued potential $V_0 \in C_0^\infty (\overline{B}_R(0); \R)$ and recall that $\mathcal{I}_R(V_0)$ is the set of similar potentials iso-resonant with $V_0$.  It follows from the Poisson formula, Theorem \ref{thm:poisson}, and the relation between the wave and the heat invariants \eqref{eq:wave-to-heat-coef1},
that for all $V \in \mathcal{I}_R(V_0)$, we have the equality
\beq\label{eq:heat-inv-eq1}
c_j(V_0) = c_j(V),  j \geq \frac{d+1}{2}, ~~d\geq 3 ~{\rm odd}.
\eeq
For $d=3$, it follows that the equality $c_2(V_0) = c_2(V)$ implies that
$$
\| V \|_{2} = \| V_0 \|_{2}, ~\forall V \in {\mathcal{I}}_R(V_0).
$$
This provides the first step of the induction in Theorem \ref{thm:uniform-sobolev1} so we conclude that for each $s \in \N$, there is a finite constant $C_s(V_0) > 0$ so that $\| V \|_{s,2} \leq C_s(V_0)$ for all $V \in \mathcal{I}_R(V_0)$.

It now remains to prove compactness. Let $\mathcal{C}_s := \{ C_s ~|~ s \in \N  \}$ be any sequence of finite, nonnegative constants. We first consider a larger set $\mathcal{V}_R (\mathcal{C}_s) \subset C_0^\infty (\overline{B}_R(0);\R)$ consisting of all $V \in C_0^\infty (\overline{B}_R(0); \R)$ such that $\| V\|_{s,2} \leq C_s$, for all $s \in \N$.
It is clear that $\mathcal{I}_R(V_0) \subset \mathcal{V}_R( \mathcal{C}_s)$, for a suitable sequence $\mathcal{C}_s$.

%%%%%%%%%%%%%%%%%%%%%%%%%%%%%%%%%%%%%%%%%%%%%%%%%%%%%%%%%%%%%%%%%%%%%%%%%%%%%%%%%%%%%%%%%%%%%
We recall the Fr\'echet metric on $C_0^\infty (\R^d)$ defined as follows.

\begin{definition} %Let $\alpha \in \N^d$ be a $d$-length multi-index and
Let $\alpha: \N \rightarrow \N^d$ be a bijective map so that $\alpha(i) = (\alpha(i)^1, \ldots, \alpha(i)^d) \in \N^d$.
%Let $\mathcal{A}$ denote the set of all $d$-length multi-indices $\alpha \in \N^d$. For any $V, W \in C^\infty_0(\mathbb{R}^d)$,
%and let $\{\alpha_i\}$ enumerate the set $\{\alpha\}$ of all $d$-length multi-indexes.  Then for this enumeration,
For any $V, W \in C^\infty_0(\mathbb{R}^d)$, we define the Fr\'echet metric by
\beq\label{eq:frechet1}
\rho_F(V,W)=\sum_{i \in {\N}} 2^{-i}\frac{\|D^{\alpha(i)} (V-W)\|_{2}}{1+\|D^{\alpha(i)}
(V-W)\|_{2}} .
\eeq
\end{definition}

\begin{proposition}\label{prop:compactness1}
The family $\mathcal{V}_R ( \mathcal{C}_s) \subset C_0^\infty (\overline{B}_R(0); \R)$
is compact with respect to the Fr\'echet metric \eqref{eq:frechet1}.
\end{proposition}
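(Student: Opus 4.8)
The plan is to show that $\mathcal{V}_R(\mathcal{C}_s)$ is sequentially compact in the Fréchet metric, which suffices since $(C_0^\infty(\overline{B}_R(0);\R), \rho_F)$ is a metric space. So let $\{V_n\} \subset \mathcal{V}_R(\mathcal{C}_s)$ be an arbitrary sequence. The key input is the uniform Sobolev bound: $\|V_n\|_{s,2} \leq C_s$ for every $s \in \N$, with all $V_n$ supported in the fixed compact set $\overline{B}_R(0)$. First I would extract, by a diagonal argument, a subsequence along which convergence holds in every Sobolev norm simultaneously. Concretely: the embedding $H^{s+1}(B_R(0)) \hookrightarrow H^s(B_R(0))$ is compact (Rellich–Kondrachov, using that $B_R(0)$ has the cone condition as in Theorem \ref{thm:sobolevEmbedding1}), so from the bounded sequence $\{V_n\}$ in $H^1$ we extract a subsequence converging in $H^0 = L^2$; from that subsequence, bounded in $H^2$, we extract a further subsequence converging in $H^1$; and so on. The standard diagonal subsequence, still denoted $\{V_n\}$, then converges in $H^s(B_R(0))$ for every fixed $s \geq 0$ to some limit $V_\infty$; since all $V_n$ are supported in $\overline{B}_R(0)$, so is $V_\infty$.

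Next I would check that the limit $V_\infty$ lies in $\mathcal{V}_R(\mathcal{C}_s)$, i.e. is smooth and obeys the same bounds. For each fixed $s$, the sequence $\{V_n\}$ is bounded in $H^{s+1}$ by $C_{s+1}$, hence has a weak-$*$ (weakly) convergent subsequence in $H^{s+1}$; its weak limit must coincide with the strong $H^s$-limit $V_\infty$, so $V_\infty \in H^{s+1}(B_R(0))$ with $\|V_\infty\|_{s+1,2} \leq \liminf_n \|V_n\|_{s+1,2} \leq C_{s+1}$ by weak lower semicontinuity of the norm. Since this holds for all $s$, we get $V_\infty \in \bigcap_s H^s(\overline{B}_R(0)) = C^\infty(\overline{B}_R(0))$ (Sobolev embedding, Theorem \ref{thm:sobolevEmbedding1}), with $\|V_\infty\|_{s,2} \leq C_s$ for every $s$; combined with $\supp V_\infty \subset \overline{B}_R(0)$, this gives $V_\infty \in \mathcal{V}_R(\mathcal{C}_s)$.

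Finally I would verify $\rho_F(V_n, V_\infty) \to 0$. Fix $\eps > 0$ and choose $N$ so that $\sum_{i > N} 2^{-i} < \eps/2$; the tail of the series \eqref{eq:frechet1} is bounded by this regardless of the functions, since each summand is at most $2^{-i}$. For the finitely many remaining terms $i \leq N$, each involves $\|D^{\alpha(i)}(V_n - V_\infty)\|_2$, which is controlled by $\|V_n - V_\infty\|_{s,2}$ for $s = |\alpha(i)|$; since $\{V_n\}$ converges to $V_\infty$ in every $H^s$, each of these finitely many terms tends to $0$, so their sum is below $\eps/2$ for $n$ large. Hence $\rho_F(V_n, V_\infty) < \eps$ eventually, proving convergence in the Fréchet metric. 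The only mildly delicate point — and the one I would state carefully rather than the "hard part" — is the diagonal extraction: one must arrange the compact-embedding subsequences nested so that a single diagonal sequence works in all $H^s$ at once; this is routine but needs the compactness of each embedding $H^{s+1}(B_R(0)) \hookrightarrow H^s(B_R(0))$ on the fixed bounded domain.
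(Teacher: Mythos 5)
Your proof is correct, but it follows a genuinely different route from the paper. The paper does not use Rellich--Kondrachov at all: it first proves (Lemma \ref{lemma:equicont}) that the family and all its derivatives are equicontinuous, by combining the uniform Sobolev bounds with the embedding $H^{s}\hookrightarrow L^\infty$ for $s>d/2$ and the Mean Value Theorem, and then applies the Ascoli theorem together with a diagonal argument to get a subsequence whose derivatives of every order converge uniformly on $B_R(0)$; Fr\'echet convergence is then checked exactly as in your last step. Your argument replaces equicontinuity/Ascoli by the compact embeddings $H^{s+1}(B_R(0))\hookrightarrow H^{s}(B_R(0))$ plus weak lower semicontinuity (in fact, since your diagonal subsequence converges strongly in every $H^s$, the bound $\|V_\infty\|_{s,2}\le C_s$ also follows directly from norm convergence, so the weak-compactness step is optional). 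What each buys: your route is more directly matched to the metric \eqref{eq:frechet1}, which is built from $L^2$ norms of derivatives, so convergence in every $H^s$ immediately gives $\rho_F$-convergence without passing through sup norms; the paper's route yields locally uniform convergence of all derivatives ($C^k$ convergence on the ball), and the equicontinuity lemma is reused later for the less regular case in section \ref{sec:less-reg-compactness}. One small point to state carefully in your write-up: smoothness of $V_\infty$ should be asserted as a function on $\R^d$ (not just on $\overline{B}_R(0)$), which follows because all $V_n$ vanish outside $\overline{B}_R(0)$ and the uniform bounds are full-space Sobolev bounds, so $V_\infty\in H^s(\R^d)$ for all $s$ and is supported in $\overline{B}_R(0)$; with that noted, your argument is complete and at the same level of rigor as the paper's.
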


%\begin{definition} Let $\mathcal{V}_{r,d}$ be the set
%$$
%\mathcal{V}_{r,d}=\{V\in C^\infty_0(\mathbb{R}^d) | \text{ supp } V \subset B_r(0), \| V\|_{W_{j,2}} < C_{j} \, , %\forall j\} .
%$$
%\end{definition}

The proof follows from an application of the Ascoli Theorem \cite[Theorem I.28]{reed-simon1}.
We begin with a lemma.

\begin{lemma}\label{lemma:equicont}
The family $\mathcal{V}_R(\mathcal{C}_s)$ is equicontinuous in every derivative.
\end{lemma}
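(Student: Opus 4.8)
The plan is to prove equicontinuity of the family $\mathcal{V}_R(\mathcal{C}_s)$ "in every derivative", meaning that for each fixed multi-index $\beta$, the set of functions $\{ D^\beta V : V \in \mathcal{V}_R(\mathcal{C}_s)\}$ is equicontinuous on $\overline{B}_R(0)$ (equivalently, a uniformly equicontinuous family of continuous functions on the compact set $\overline{B}_R(0)$). The key input is the uniform control of \emph{all} Sobolev norms $\|V\|_{s,2} \le C_s$ together with the Sobolev embedding theorem (Theorem~\ref{thm:sobolevEmbedding1}) on the bounded domain $B_R(0)$, whose boundary satisfies the cone condition. First I would fix $\beta \in \N_0^d$. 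Pick an integer $k$ large enough that $H^{|\beta|+k}(B_R(0)) \hookrightarrow C^{1}(\overline{B}_R(0))$; by the Sobolev embedding it suffices to take $k > 1 + d/2$. Then for any $V \in \mathcal{V}_R(\mathcal{C}_s)$,
\[
\| D^\beta V \|_{C^1(\overline{B}_R(0))} \le C_{\mathrm{Sob}} \, \| D^\beta V \|_{|\beta|+k - |\beta|,\,2}' \le C_{\mathrm{Sob}}\, \| V \|_{|\beta|+k,\,2} \le C_{\mathrm{Sob}}\, C_{|\beta|+k},
\]
where the middle inequality just says that a $|\beta|$-th derivative of $V$ has $H^{k}$-norm bounded by $\|V\|_{|\beta|+k,2}$. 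Hence the gradients $\nabla(D^\beta V)$ are uniformly bounded on $\overline{B}_R(0)$ by a constant $M_\beta$ depending only on $\beta$ and the sequence $\mathcal{C}_s$.

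From the uniform gradient bound, equicontinuity is immediate: for $x, y \in \overline{B}_R(0)$ (a convex set), the mean value inequality gives
\[
| (D^\beta V)(x) - (D^\beta V)(y) | \le M_\beta\, |x-y|
\]
for every $V \in \mathcal{V}_R(\mathcal{C}_s)$, so the family $\{D^\beta V\}$ is (uniformly Lipschitz, hence) equicontinuous. Since $\beta$ was arbitrary, the family is equicontinuous in every derivative. I would also note in passing that the same estimate with $k > d/2$ (no $C^1$ needed) yields a uniform $C^0$-bound $\|D^\beta V\|_\infty \le C_{\mathrm{Sob}} C_{|\beta|+k}$, i.e.\ the family is uniformly pointwise bounded in every derivative as well; this is the companion hypothesis that, together with equicontinuity, feeds into the Ascoli theorem in the proof of Proposition~\ref{prop:compactness1}. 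Because all functions are supported in the fixed compact set $\overline{B}_R(0)$, there is no issue at infinity and no loss of mass, so Ascoli applies directly on $\overline{B}_R(0)$.

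The only real subtlety — and the step I would be most careful about — is making sure the Sobolev embedding is being applied on a domain for which it is valid with a constant uniform over the family. This is fine here because the domain $B_R(0)$ is fixed, bounded, and has a smooth (hence cone-condition) boundary, so $C_{\mathrm{Sob}}$ depends only on $d$, $R$, and the orders of differentiation, not on $V$; and the bound $\|D^\beta V\|_{k,2} \le \|V\|_{|\beta|+k,2}$ is just the definition of the Sobolev norm. One should phrase the embedding step so that the loss of a fixed number $k$ of derivatives is absorbed by going up the sequence $\mathcal{C}_s$, which is permissible precisely because $\|V\|_{s,2}$ is controlled for \emph{every} $s \in \N$, not just finitely many. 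No further obstacle arises: the argument is a direct combination of the Sobolev embedding and the mean value theorem on a convex bounded set.
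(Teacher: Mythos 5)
Your proof is correct and follows essentially the same route as the paper: uniform bounds on all Sobolev norms plus the Sobolev embedding give a uniform sup-bound on the gradient of each fixed derivative $D^\beta V$, and the mean value theorem on the convex set $\overline{B}_R(0)$ then yields a uniform Lipschitz estimate, hence equicontinuity in every derivative. The only cosmetic difference is that you phrase the embedding as $H^{|\beta|+k}\hookrightarrow C^1$ while the paper bounds $\|\nabla(D^{\alpha(i)}V)\|_\infty$ by a higher Sobolev norm directly; these are the same estimate.
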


\begin{proof}
%Let $x,y\in B_r(0)$, $\alpha$ be a $d$-length multi-index, and $V \in \mathcal{V}_{r,d}$.
The family $\mathcal{V}_{R}(\mathcal{C}_s)$ is equicontinuous in each derivative if, for each $i \in \N$
and for any $\epsilon > 0$, there exists a $\delta > 0$ such that for all $V \in \mathcal{V}_R(\mathcal{C}_s)$, the condition $\|x-y\| \leq \delta$, for $x,y \in B_R(0)$, implies
$$
|D^{\alpha(i)} V(x)-D^{\alpha(i)} V(y)|\leq C_i \delta,
$$
for a constant $C_i$ depending only on $i$.
From the Sobolev Embedding Theorem, Theorem \ref{thm:sobolevEmbedding1}, we have
$$
\|V\|_{\infty}\leq \|V\|_{s,2}  \text{ for } s > \frac{d}{2} .
$$
which implies
$$
\|D^{\alpha(i)} V\|_{\infty}\leq \|V\|_{|\alpha (i)|+s, 2} ,  \text{ for } s > \frac{d}{2}.
$$
Since for all $V\in \mathcal{V}_{R}( \mathcal{C}_s)$, the $\|V\|_{s,2} \leq C_s$, for all $s \in \N$,
          we get a uniform bound on the $L^\infty$-norms of $D^{\alpha(i)} V$ which depends only on $|\alpha (i)|$.
%Now fix $\alpha$ and consider $x,y\in B_r(0)$.
By the Mean Value Theorem for $x,y\in B_R(0)$, we have
$$
|D^{\alpha (i)} V(x)-D^{\alpha (i)} V(y)| \leq \| \nabla (D^{\alpha(i)} V)\|_{\infty}\|x-y \| \leq
C_{|\alpha(i)|+1+ s} \|x-y\| ,
$$
for $s > \frac{d}{2}$ and where $C_{|\alpha(i)| +  s}$ is uniform with respect to $\mathcal{V}_{R}(\mathcal{C}_s)$. This establishes the equicontinuity of the family $\mathcal{V}_{R}(\mathcal{C}_s)$ in the ${\alpha(i)}^{th}$ derivative for each $i \in \N$.
%$\alpha \in \mathcal{A}$.
\end{proof}

%\begin{proposition}
%The family $\mathcal{V}_{r,d}$ is compact with respect to the Fr\'echet norm.
%\end{proposition}

\noindent
{\bf Proof of Proposition \ref{prop:compactness1}}
\begin{proof}
1. Let $\{ V_j \} \subset \mathcal{V}_R( \mathcal{C}_s)$ be an arbitrary sequence. By equicontinuity, there exists a subsequence $\{ V_{j_i} \}$
that converges uniformly. By Lemma \ref{lemma:equicont}, for $i \in \N$ and multi-index $\alpha(i) \in \N^d$, the sequence $\{ D^{\alpha(i)} V_j \}$ is also equicontinuous and so has a uniformly convergent subsequence $\{V_{j_m (\alpha(i))}\}$.
%We label the multi-indices by $\{ \alpha_\ell \}$.
So for each multi-index $\alpha(i)$, there is a subsequence $m \rightarrow j_m (\alpha(i))$ so that $\{ D^{\alpha(i)} V_{j_m(\alpha(i))} \}$ converges uniformly. Diagonalizing the sequence of subsequences $\{V_{j_m (\alpha(i))}\}_m$ yields the subsequence $\{V_{j_{m_\ell}(\alpha(i))}\}_\ell$ such that it and all the subsequences
$\{ D^{\alpha(i)} V_{j_{m_\ell} (\alpha(i))} \}_\ell$ converge uniformly on $B_R(0)$. Hence the limit potential $V \in C_0^\infty (\overline{B}_R(0); \R)$.
%$V_{n_j}\to V$ uniformly in each derivative.
%\noindent The support of each $V_{n}$ is contained in $B_r(0)$, so

\noindent
2. We next prove that $V_{j_{m_\ell}} \to V$ in the Fr\'echet metric defined in \eqref{eq:frechet1}.
Because the series on the right in \eqref{eq:frechet1} converges, given $\epsilon > 0$, there exists $L > 0$, independent of $j$, so that
\beq\label{eq:upper-sum1}
\sum_{i=L+1}^\infty 2^{-i} \frac{\|D^{\alpha(i)} V_{j_{m_\ell}} - D^{\alpha(i)} V\|_{2}}{1+\|D^{\alpha(i)} V_{j_{m_\ell}}-D^{\alpha(i)} V\|_{2}} < \frac{\epsilon}{2} .
\eeq
To control the sum up to $L$, we note that
for each multi-index $\alpha(i)$, we can choose $J(i)$ such that $j>J(i)$ implies
$$
\|D^{\alpha(i)} V_{j_{m_\ell}}-D^{\alpha(i)} V \|_{2} < \frac{2^{i-1} \epsilon}{L+1} . %\epsilon |B_R(0)|.
$$
Setting $J >  \max \{ J(i) ~|~ i = 1, \ldots, L \}$,
it follows that for $j>J$
\beq\label{eq:lower-sum1}
\sum_{i=0}^L 2^{-i}\frac{\|D^{\alpha(i)} V_{j_{m_\ell}}-D^{\alpha(i)} V\|_{2}}{1+\|D^{\alpha(i)} V_{j_{m_\ell}}-D^{\alpha(i)} V\|_{2}} < \frac{\epsilon}{2}.
\eeq
%and $i\leq l$ implies
%\[\|D^{\alpha_i} V_{n_j}-D^{\alpha_i} V\|_{L^2}<\frac{\varepsilon2^{i-1}}{l+1-\varepsilon2^{i-1}}.\]
%We claim the $V_{n_j}\to V$ in the $C^\infty$ metric.  Consider
%\[\|V_{n_j}-V\|_F=\sum_i 2^{-i}\frac{\|D^{\alpha_i} V_{n_j}-D^{\alpha_i} V\|_{L^2}}{1+\|D^{\alpha_i} V_{n_j}-D^{\alpha_i} V\|_{L^2}}.\]
%\noindent Choose $l$ such that
%\[\sum_{i=l+1}^\infty 2^{-i}\frac{\|D^{\alpha_i} V_{n_j}-D^{\alpha_i} V\|_{L^2}}{1+\|D^{\alpha_i} V_{n_j}-D^{\alpha_i} %V\|_{L^2}}<\frac{\varepsilon}{2}.\]
%\noindent We then choose $J$ such that $j>J$ and $i\leq l$ implies
%\[\|D^{\alpha_i} V_{n_j}-D^{\alpha_i} V\|_{L^2}<\frac{\varepsilon2^{i-1}}{l+1-\varepsilon2^{i-1}}.\]
%\noindent Then for $j>J$
%\[\sum_{i=0}^l 2^{-i}\frac{\|D^{\alpha_i} V_{n_j}-D^{\alpha_i} V\|_{L^2}}{1+\|D^{\alpha_i} V_{n_j}-D^{\alpha_i} V\|_{L^2}}<\frac{\varepsilon}{2}.\]
Combining \eqref{eq:lower-sum1} and \eqref{eq:upper-sum1}, we obtain $\rho_F(V_{j_{m_\ell}}, V) < \epsilon$, for $j > J$.
% $V_{n_j}\to V $ in the $C^\infty$ norm.
We then note that subsequential limit point $V\in C^\infty_0(\overline{B}_R(0); \R)$
%, that ${\rm supp} ~V \subset \bigcup\limits_j$ supp $V_{n_j}\subset B_r(0)$,
and that $\|D^{\alpha(i)} V\|_{2} < C_{\alpha(i)}$ for all multi-indices $\alpha(i)$.
So $V\in \mathcal{V}_{R} ( \mathcal{C}_s )$. This means that $\{V_n\}$ has a convergent subsequence so $\mathcal{V}_{R}( \mathcal{C}_s)$ is compact.
\end{proof}

\begin{lemma}\label{lemma:hurwitz1}
Suppose $\{ V_j \} \subset \mathcal{I}_R(V_0)$ converges to $V_\infty$ in the $\C^\infty$-Fr\'echet metric. Then $D_{p, V_\infty}(\lambda)$ has the same zeros with the same orders as $D_{p, V_0}(\lambda)$ for $\lambda \in \C$ and $p=1$ if $d=1$ and $p=2$ if $d=3$. Consequently, the potential
$V_\infty \in \mathcal{I}_R (V_0)$. Thus, the set $\mathcal{I}_R(V_0)$ is a closed subset of ${\mathcal{V}_R } ( \mathcal{C}_s)$ and hence compact.
\end{lemma}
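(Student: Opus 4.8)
The plan is to pass from $C^\infty$-Fr\'echet convergence to $L^\infty$-convergence, invoke the continuity result of Proposition~\ref{prop:conv-mult1}, and then conclude with Hurwitz's theorem (equivalently, the argument principle). First I would note that $\rho_F(V_j,V_\infty)\to 0$ in the metric \eqref{eq:frechet1} forces, for each fixed multi-index $\alpha$, $\|D^{\alpha}(V_j-V_\infty)\|_{2}\to 0$; since for every $s$ only finitely many multi-indices have order at most $s$, this gives $\|V_j-V_\infty\|_{s,2}\to 0$ for all $s$, and hence, by the Sobolev Embedding Theorem (Theorem~\ref{thm:sobolevEmbedding1}), $\|V_j-V_\infty\|_\infty\to 0$. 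In particular $V_\infty\in L^\infty_0(\overline{B}_R(0);\R)$, so Proposition~\ref{prop:conv-mult1} applies and $D_{2,V_j}(\lambda)\to D_{2,V_\infty}(\lambda)$ uniformly on compact subsets of $\C$ when $d=3$; for $d=1$ one uses the analogous (and simpler) trace-norm estimate for the ordinary Fredholm determinant $D_{1,V}(\lambda)=\det(1+K_V(\lambda))$, since $K_V(\lambda)=VR_0(\lambda)\chi_V$ is then trace class, to obtain the same locally uniform convergence of $D_{1,V_j}$ to $D_{1,V_\infty}$.

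Next I would record two elementary facts about the limit function. By part (2) of Proposition~\ref{prop:reg-det1} each $D_{p,V_j}$ and the limit $D_{p,V_\infty}$ are entire; and $D_{p,V_\infty}\not\equiv 0$, because for $\Im\lambda$ large the trace-ideal norm of $R_p(K_{V_\infty}(\lambda))$ is small, whence $D_{p,V_\infty}(\lambda)\to 1$ along the imaginary axis. Moreover, by part (3) of Proposition~\ref{prop:reg-det1} together with the identity $m_V(\lambda)=m_{D_V}(\lambda)$, the resonance set $\mathcal{R}(V)$ of Definition~\ref{defn:ResonanceSet1} is exactly the set of zeros of $D_{p,V}$ recorded with their orders; hence, since every $V_j$ is iso-resonant to $V_0$, the entire function $D_{p,V_j}$ has precisely the same zeros, with the same orders, as $D_{p,V_0}$, for all $j$.

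The core of the argument is now a standard application of the argument principle. Fix $\lambda_0\in\C$, and let $m_0$ and $m_\infty$ be the orders of vanishing (possibly $0$) of $D_{p,V_0}$ and of $D_{p,V_\infty}$ at $\lambda_0$. Choose $\varepsilon>0$ so small that the closed disc of radius $\varepsilon$ about $\lambda_0$ contains no zero of $D_{p,V_0}$ and no zero of $D_{p,V_\infty}$ other than possibly $\lambda_0$ itself; in particular $D_{p,V_\infty}$ is nonvanishing on the bounding circle. Since $D_{p,V_j}\to D_{p,V_\infty}$ uniformly on that circle, for $j$ large the number of zeros of $D_{p,V_j}$ inside the disc, counted with multiplicity, equals $m_\infty$ (Hurwitz's theorem, or Rouch\'e's theorem applied on the circle); but by the previous paragraph that number is $m_0$. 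Hence $m_\infty=m_0$. As $\lambda_0$ was arbitrary, $D_{p,V_\infty}$ and $D_{p,V_0}$ have exactly the same zeros with the same orders, so, using $m_V=m_{D_V}$ once more, $\mathcal{R}(V_\infty)=\mathcal{R}(V_0)$. Since $V_\infty$ lies in $\mathcal{V}_R(\mathcal{C}_s)\subset C_0^\infty(\overline{B}_R(0);\R)$ (a compact, hence closed, subset of the Fr\'echet space by Proposition~\ref{prop:compactness1}), we conclude $V_\infty\in\mathcal{I}_R(V_0)$. Therefore $\mathcal{I}_R(V_0)$ is a closed subset of $\mathcal{V}_R(\mathcal{C}_s)$, and since the latter is compact by Proposition~\ref{prop:compactness1}, so is $\mathcal{I}_R(V_0)$.

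I expect the only delicate point to be bookkeeping rather than conceptual: one must be certain that ``iso-resonant'' genuinely translates into the statement that $D_{p,V_j}$ and $D_{p,V_0}$ share zeros together with their multiplicities --- which is exactly the content of part (3) of Proposition~\ref{prop:reg-det1} and the relation $m_V=m_{D_V}$ --- and that $D_{p,V_\infty}\not\equiv 0$, so that Hurwitz's theorem applies and $m_\infty<\infty$. Everything else is the routine passage from locally uniform convergence of holomorphic functions to convergence of their zero divisors.
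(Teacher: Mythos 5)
Your proof is correct and follows essentially the same route as the paper: locally uniform convergence of the regularized determinants (Proposition \ref{prop:conv-mult1}, after passing from Fr\'echet to $L^\infty$ convergence), the identification of resonances with zeros of $D_{p,V}$ including multiplicities via Proposition \ref{prop:reg-det1} and $m_V=m_{D_V}$, and a Hurwitz/Rouch\'e argument transferring the common zero divisor to the limit, followed by closedness inside the compact set $\mathcal{V}_R(\mathcal{C}_s)$. The only difference is minor: you obtain $D_{p,V_\infty}\not\equiv 0$ directly from the large-$\Im\lambda$ behavior $D_{p,V_\infty}(\lambda)\to 1$, whereas the paper argues via constancy of the heat invariant $c_2$ (so $V_\infty\not\equiv 0$) and the existence of infinitely many resonances for nonzero smooth compactly supported potentials; your version is, if anything, more direct.
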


\begin{proof}
 The second heat invariants satisfy $c_2(V_0) = c_2(V_j) = \int V^2_0 \neq$ so as $\lim c_2(V_j) = c_2(V_\infty)$, the limit potential
 $V_\infty$ is not identically zero. Since $V_\infty \in C_0^\infty (\R^d; \R)$, the corresponding Schr\"odinger operator has infinitely many resonances \cite{sabarreto-zworski1}. This means that the analytic function $D_{p,V_\infty}(\lambda)$ is not identically zero so we can apply the Hurwitz Theorem. The family of analytic functions $\{ D_{p, V_j}(\lambda) \}$ all have the same zeros, including order, as the function $D_{p, V_0}(\lambda)$. If $D_{p, V_\infty}(\lambda)$ did not have a zero at a zero of $D_{p, V_0}(\lambda)$, it would contradict Hurwitz Theorem. Similarly the order of the zero has to be the same as the order of the zero of $D_{p, V_0}(\lambda)$. Hence, the potential $V_\infty \in \mathcal{I}_R(V_0)$.
\end{proof}

%%%%%%%%%%%%%%%%%%%%%%%%%%%%%%%%%%%%%%%%%%%%%%%%%%%%%%%%%%%%%%%%%%%%%%%%%%%%%%%%%%%%%%%%%%%%%%%%%%%%%%%%%%%%%%%%%%%%

\section{Compactness of the iso-resonant set $\mathcal{I}^m_R(V_0)$}\label{sec:less-reg-compactness}
\setcounter{equation}{0}

We discuss Proposition \ref{prop:main2} concerning less regular potentials in this section. For dimension $d=3$ and $V_0 \in H^{m}(\overline{B}_R(0); \R)$, we let $\mathcal{I}^m_R(V_0)$ denote the set of real potentials in $H^{m}(\overline{B}_R(0); \R)$ that are iso-resonant with $V_0$.
The first statement part of Proposition \ref{prop:main2} follows directly from \cite[Theorem 1.2]{smith-zworski1}.

As for the second part, suppose that $V_0 \in H^m(\overline{B}_R(0))$, for $m \geq 3$. Then, the resonances determine the wave trace through the Poisson formula. Assumption (WTE)
guarantees that the wave invariants satisfy $w_j(V) = w_j(V_0)$, for $j=2,3,\ldots, m+2$, for all $V \in {\mathcal{I}}^m_R(V_0)$.
%\begin{lemma}\label{lemma:unif-bds-heat-inv1}
Thus, for any $V \in \mathcal{I}_R^m(V_0)$, the heat invariants satisfy
\beq\label{eq:heat-inv1}
c_j(V) = c_j(V_0), ~~j=1, \ldots, m+2.
\eeq
%\end{lemma}
It is a consequence of the formula \eqref{eq:wave-tr-coef1}, the identity \eqref{eq:wave-to-heat-coef1}, \eqref{eq:heat-inv1}, and the induction of section \ref{sec:sobolev-bounds1}, that for $j=0, \ldots, m$, we have uniform bounds on the Sobolev norms across the iso-resonant class:
\beq\label{sobolev-less-reg1}
\| V \|_{j,2} \leq C_j, ~~\forall ~~ V \in \mathcal{I}_R^m(V_0) .
\eeq
As in the proof of equicontinuity in Lemma \ref{lemma:equicont}, the derivatives of $V$ are uniformly bounded in the $L^\infty$-norm up to and including order $m-2$ for $d=3$. Finally, the same argument shows uniform equicontinuity of the derivatives of the potentials up to order $m-3$. The compactness of ${\mathcal{I}}^m_R(V_0) \subset H^{m-3}(\overline{B}_R(0))$ now follows by the same argument as in section \ref{sec:compactness}.

 %using the induction in section \ref{sec:sobolev-bounds1} up to order $m$.

%%%%%%%%%%%%%%%%%%%%%%%%%%%%%%%%%%%%%%%%%%%%%%%%%%%%%%%%%%%%%%%%%%%%%%%%%%%%%%%%%%%%%%%%%%%%%%%%%%%%%%%%%%%%%%%%%%%%%%%%%%%%%%%%%%%%%%%%%%%%%%%%%%%

 \section{Appendix: Proof of the uniform Sobolev bounds}\label{sec:sobolev-bounds1}
\setcounter{equation}{0}

In this appendix, we present the details of the uniform Sobolev bounds on the iso-resonant class of potentials $\mathcal{I}_R (V_0)$ presented in Theorem \ref{thm:uniform-sobolev1}. We use standard notation $H^{s,p}(\Omega)$, for $\Omega \subset \R^d$, an open set (with a boundary satisfying the cone condition, see Theorem \ref{thm:sobolevEmbedding1}), for the space of functions with $s$-weak-derivatives in $L^p(\Omega)$. The norm is denoted by $\| \cdot \|_{s,p}$.

We recall from \eqref{eq:indexset1} the index set of $k$-tuples $\mathcal{A}_{j,k}$, defined for $j \geq 3$ and
$3 \leq k \leq j$ by:
\bea\label{eq:indexset2}
\mathcal{A}_{j,k} &=& \left\{ \alpha = ( \alpha^1, \ldots , \alpha^k) \left| \begin{array}{l}
   %\alpha = ( \alpha^1, \ldots , \alpha^k) \\
   \alpha^m = ( \alpha_1^m, \ldots, \alpha_d^m) \in \N_0^d \\
   | \alpha^m| \leq j-k \\
   \sum_{m=1}^k  | \alpha^m| = 2(j-k) \\
   \sum_{m=1}^k \alpha_{\ell}^m \in 2 \N, ~~\forall ~ \ell =1, \ldots, d.
    \end{array} \right.
\right\} .
\eea

From Proposition \ref{prop:wave-coef1} and \eqref{eq:wave-tr-coef2}, the Sobolev norm $ \|V\|_{j-2,2}^2$, for $j \geq 3$, is bounded above as
\beq\label{eq:sobolev-sum1}
 \|V\|_{j-2,2}^2 \leq C_j \left( 1 +  \sum_{k=3}^j \sum_{\alpha \in \mathcal{A}_{j,k}} \int_{\R^d} | D^{\alpha^1}(V)D^{\alpha^2}(V)\cdots D^{\alpha^k}(V)| ~d^dx \right) ,
\eeq
%where $\alpha = ( \alpha_1, \ldots , \alpha_k)$, $|\alpha| = \sum_{i=1}^k |\alpha_i|$, and $\ell(k)=2(j-k)$. We also %have the constraints $|\alpha_i|\leq j-k$ and $\sum\limits_{i=1}^k |\alpha_i|\leq 2(j-k)$.
where the constant $C_j > 0$ is independent of our choice of $V$ from the iso-resonant set.
%\[ \|V\|_{j-2,2}^2 \leq C\left(1+\sum\limits_{k=3}^j \sum\limits_{\alpha \in \mathbb{Z}^d , |\alpha|= l(k)}\int %|D^{\alpha_1}(V)D^{\alpha_2}(V)\cdots D^{\alpha_k}(V)|\right)\]
Proceeding inductively, we must find an upper bound on the sum \eqref{eq:sobolev-sum1}
for the $H^{j-2,2}$-norm of $V$ in terms of the lower Sobolev norms of $V$.

%A bound on the Sobolev norm of $V$ is obtained from a rearrangement of \eqref{eq:wave-tr-coef1}. For
%each $j \geq 3$ we have the bound
%\beq\label{eq:wave-tr-coef2}
% \|V\|_{j-2,2}^2 \leq C\left(1+\sum\limits_{k=3}^j \sum\limits_{\alpha \in \mathbb{Z}_+^d , |\alpha|= l(k)}\int %|P^k_{\alpha_1}(V)P^k_{\alpha_2}(V)\cdots P^k_{\alpha_k}(V)|\right) ,
% \eeq
%where $C>0$ is independent of our choice of $V$ from the iso-resonant set.

%%%%%%%%%%%%%%%%%%%%%%%%%%%%%%%%%%%%%%%%%%%%%%%%%%%%%%%%%%%%%%%%%%%%%%%%%%%%%%%%%%%%%%%%%%%%%%%%%%%%%%

%%%%%%%%%%%%%%%%%%%%%%%%%%%%%%%%%%%%%%%%%%%%%%%%%%%%%%%%%%%%%%%%%%%%%%%%%%%%%%%%%%%%%%%%%

\subsection{The one-dimensional case}\label{subsec:one-dim1}

We first discuss the case $d=1$ for which the bounds are easier to obtain.
We begin with a simple and useful bound.

\begin{lemma} \label{Dimension 1 L-infty bounds}
For any $u\in C^1_0(\mathbb{R})$, we have
$$
\|u\|_\infty\leq C \|u\|_{1,2} ,
$$
where the constant depends on the support of $u$.
\end{lemma}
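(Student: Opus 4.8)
The plan is to prove the one-dimensional Sobolev-type inequality $\|u\|_\infty \le C\|u\|_{1,2}$ for $u \in C^1_0(\R)$ by the standard fundamental-theorem-of-calculus argument, keeping careful track of the dependence of the constant on $\supp u$. First I would fix $R>0$ with $\supp u \subset [-R,R]$; since $u$ vanishes at $-R$ (indeed for all $x \le -R$), for any $x \in \R$ I can write $u(x) = \int_{-R}^{x} u'(t)\,dt$, so that $|u(x)| \le \int_{-R}^{R} |u'(t)|\,dt$. Applying the Cauchy--Schwarz inequality to the right-hand side gives $|u(x)| \le (2R)^{1/2}\,\|u'\|_{2}$, and taking the supremum over $x$ yields $\|u\|_\infty \le (2R)^{1/2}\,\|u'\|_{2} \le (2R)^{1/2}\,\|u\|_{1,2}$, which is the claim with $C = (2R)^{1/2}$ depending only on the support of $u$.

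There is essentially no obstacle here: the only mildly delicate point is the appearance of $R$ (hence support-dependence) in the constant, which is why the statement is phrased with that caveat, and it is harmless in the application since all potentials under consideration are supported in the fixed ball $\overline{B}_R(0)$. An alternative, support-independent route would be the Gagliardo--Nirenberg interpolation estimate $\|u\|_\infty^2 \le 2\|u\|_2\|u'\|_2$ obtained from $u(x)^2 = 2\int_{-\infty}^{x} u(t)u'(t)\,dt$ and Cauchy--Schwarz, which then gives $\|u\|_\infty \le \sqrt{2}\,\|u\|_{1,2}$ by Young's inequality; I would mention this as a remark but use the simpler support-dependent version since that is all that is needed. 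The extension from $C^1_0$ to the relevant Sobolev class, if ever needed, follows by the usual density argument, but is not required at the level of this lemma.
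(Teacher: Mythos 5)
Your argument is correct: the fundamental-theorem-of-calculus plus Cauchy--Schwarz estimate, giving $\|u\|_\infty \le (2R)^{1/2}\|u\|_{1,2}$ with the constant depending only on the support, is exactly the standard elementary proof that the paper implicitly relies on (it states the lemma without proof as a "simple and useful bound"). Your tracking of the support-dependence of the constant and the remark on the support-free Gagliardo--Nirenberg alternative are both fine and consistent with how the lemma is used.
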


%\begin{proof}
%This follows from Theorem \ref{FourierInfinity} with $d=1$.
%This follows as a Corollary to Morrey's inequality as
%\[|u(x)| \leq \|u\|_{C^{0,\frac{1}{2}}}\leq C\|u\|_{1,2}\]
%Where $C^{0,\frac{1}{2}}$ is the $\frac{1}{2}$ H\"{o}lder space.
%\end{proof}

The first step of an inductive proof of the uniform Sobolev bounds is the following proposition.

\begin{proposition}\label{prop:sobolev-bd1d}
Let $d=1$, $j \geq 3$, and suppose that $\|V\|_{j-3,2}\leq M$.
% \, , V\in IsoRes(V_0)$
We then have
\beq\label{eq:induction1d1}
\int_\R ~|D^{\alpha^1}(V)D^{\alpha^2}(V)\cdots D^{\alpha^k}(V)|\leq C_j ,
\eeq
where $C_j > 0$ depends on $M$ and $j$.
\end{proposition}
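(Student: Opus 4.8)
The plan is to estimate the integral in \eqref{eq:induction1d1} by passing all but one factor to the $L^\infty$-norm and keeping one factor in $L^1$. Concretely, I would order the factors so that $|\alpha^1| = \max_m |\alpha^m|$ is the largest, then write
$$
\int_\R |D^{\alpha^1}V \cdots D^{\alpha^k}V| \;\le\; \|D^{\alpha^1}V\|_1 \prod_{m=2}^k \|D^{\alpha^m}V\|_\infty .
$$
Since $|\alpha^1| \le j - k$ and $k \ge 3$, we have $|\alpha^1| \le j - 3$, so $\|D^{\alpha^1}V\|_1$ is controlled, after using that $V$ is supported in $\overline{B}_R(0)$ (so $\|\,\cdot\,\|_1 \le C_R \|\,\cdot\,\|_2$), by $\|V\|_{j-3,2} \le M$. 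For each of the remaining factors, Lemma \ref{Dimension 1 L-infty bounds} applied to $u = D^{\alpha^m}V$ gives $\|D^{\alpha^m}V\|_\infty \le C \|D^{\alpha^m}V\|_{1,2} \le C\|V\|_{|\alpha^m|+1,2}$. The constraint I must exploit is that the budget $\sum_{m=1}^k|\alpha^m| = 2(j-k)$ is split among the factors with $|\alpha^1|$ the largest; hence for $m \ge 2$ the orders $|\alpha^m|$ are small enough that $|\alpha^m| + 1 \le j-3$, so each such norm is again bounded by $M$ (up to the $j$-dependent constant).

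The key inequality to check is the counting one: with $k$ factors summing to $2(j-k)$ and the first one largest, the remaining ones satisfy $|\alpha^m| \le j-k - \lceil (2(j-k))/k\rceil$ roughly, and more crudely each $|\alpha^m| \le 2(j-k) - |\alpha^1| \le 2(j-k) - \lceil 2(j-k)/k \rceil$. What actually matters is just that $|\alpha^m|+1 \le j-3$ for $m\ge 2$ so that Lemma \ref{Dimension 1 L-infty bounds} plus the hypothesis $\|V\|_{j-3,2}\le M$ closes the estimate; one verifies this from $k\ge 3$, $|\alpha^m|\le j-k$, and $|\alpha^1|\ge |\alpha^m|$ together with $\sum_m|\alpha^m| = 2(j-k)$, by splitting into the boundary cases $k=j$ (all $|\alpha^m|=0$), $k=j-1$, and $3\le k\le j-2$. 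In the first two cases the orders are $0$ or $1$ and the bound is immediate; in the remaining range one checks $|\alpha^m|\le j-k\le j-3$ directly, and in fact $|\alpha^m|+1\le j-k+1\le j-2$, which is still $\le j-3$ only when $k\ge 3$ fails to be tight — so more care is needed and one should instead argue that at most one $\alpha^m$ can be as large as $j-k$, while the others are strictly smaller, using $\sum |\alpha^m|=2(j-k)$ with $k-1\ge 2$ remaining slots.

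The main obstacle, therefore, is the bookkeeping in this counting step: I expect to need a short case analysis (or an averaging argument over the $k-1$ non-maximal indices) to guarantee that every factor except the one placed in $L^1$ has order at most $j-3-1 = j-4$... no — order at most $j-3$ so that Lemma \ref{Dimension 1 L-infty bounds} costs only one extra derivative and lands inside $H^{j-3,2}$ — and then collect all the resulting constants. Once that is settled, the proof is a one-line product estimate: combining the $L^1$ bound on the leading factor with the $L^\infty$ bounds on the rest yields
$$
\int_\R |D^{\alpha^1}V\cdots D^{\alpha^k}V| \le C_j\,\|V\|_{j-3,2}^k \le C_j\,M^k,
$$
which is of the required form with a constant depending only on $M$ and $j$ (the dependence on $R$ being absorbed into $C_j$). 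Summing over the finitely many $\alpha\in\mathcal{A}_{j,k}$ and over $k$ is harmless. This is exactly the mechanism that, fed into \eqref{eq:sobolev-sum1}, gives the first line of Theorem \ref{thm:uniform-sobolev1} in dimension one; the three-dimensional case is harder precisely because the crude $L^\infty$ embedding is unavailable and one must interpolate, producing the exponent $\beta < 2$ rather than a clean constant.
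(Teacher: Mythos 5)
Your general mechanism (put the low-order factors in $L^\infty$ via Lemma \ref{Dimension 1 L-infty bounds} and handle the high-order factor(s) by H\"older on the compact support) is the right one, but the counting step on which your argument hinges is wrong, and this is a genuine gap. You claim that at most one $\alpha^m$ can attain the maximal order $j-k$, so that every factor except the one placed in $L^1$ has order at most $j-4$ and the embedding $\|D^{\alpha^m}V\|_\infty \leq C\|V\|_{|\alpha^m|+1,2} \leq C\|V\|_{j-3,2}$ closes the estimate. In fact the constraints $|\alpha^m|\leq j-k$ and $\sum_m |\alpha^m| = 2(j-k)$ allow \emph{exactly two} factors of maximal order (with all remaining orders zero), and such terms genuinely occur: for $j=4$, $k=3$ the tuple $\alpha=(1,1,0)$ gives $\int (V')^2 V$, and more generally $(D^{j-3}V)^2\,V$ appears with $k=3$. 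For such a term your splitting fails: after placing one factor $D^{j-3}V$ in $L^1$, the second factor of order $j-3$ must go to $L^\infty$, which costs $\|V\|_{j-2,2}$ --- precisely the quantity being bounded in the induction --- and you cannot instead put it in $L^2$ alongside the $L^1$ factor, since $1+\tfrac12>1$ violates the H\"older condition.

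The paper's proof repairs exactly this point by a three-case analysis on the number of factors of order $j-3$ (which is at most two, since $\sum_m\alpha^m = 2(j-k)\leq 2(j-3)$): if none, all factors are bounded in $L^\infty$ by $C\|V\|_{j-3,2}$; if one, that factor goes to $L^1$ (controlled by $\|V\|_{j-3,2}$ on $B_R(0)$) and the rest to $L^\infty$; if two, the two maximal factors are paired by Cauchy--Schwarz, each in $L^2$ and hence bounded by $\|V\|_{j-3,2}$, with the remaining factors in $L^\infty$. With this case split your estimate does close and yields the bound $C_jM^j$ of the proposition; without it, the two-maximal-factor terms are not covered by your argument.
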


\begin{proof}
 1. Since we are in one dimensions, we write $\alpha^m$ for $\alpha_1^m$.
 We use the bounds on the order of the $D^{\alpha^i}(V)$-terms to conclude: 1) since $k \geq 3$, we have $\alpha^m \leq j - 3$, 2) we have the constraint $\sum_{m=1}^k  \alpha^m \leq  2(j-3)$, and 3) there at at most two terms with order $(j-3)$, since
 $$
 \sum_{m=1}^k \alpha^m = 2(j-k) \leq 2(j-3) .
 $$
%that there are at most $2$ terms with order greater than $j-4$, and those terms have order $j-3$.
These restrictions, together with Lemma \ref{Dimension 1 L-infty bounds}, will then allow us to get the desired bounds as follows:

%\medskip

\noindent
2. \textbf{Case 1:}  We assume that the product contains no terms of $j-3$.
Then, using Lemma \ref{Dimension 1 L-infty bounds} for each $i$,
\beq\label{eq:case1-prelimd1}
|D^{\alpha^i}(V)| \leq C \|D^{\alpha^i}(V)\|_{1,2} \leq  C \|V\|_{j-3,2} .
\eeq
This gives
\beq\label{eq:case1-d1}
\int_{\R} |D^{\alpha^1}(V)D^{\alpha^2}(V)\cdots D^{\alpha^k}(V)|\leq C^k M^k\leq C^j M^j ,
\eeq
where we assume that $C, M\geq max(1,|B_R(0)|)$.

\noindent
3. \textbf{Case 2:} We assume that the product contains one term of order $j-3$, say $\alpha^1$.
Using the results from Case 1, together with the H\"{o}lder inequality, we have
\bea\label{eq:case2-d1}
\int_{\R} |D^{\alpha^1}(V) D^{\alpha^2}(V) \cdots D^{\alpha^k}(V)| & \leq & C^{k-1} M^{k-1} \int_\R |D^{\alpha^1}(V)| \nonumber \\
 & \leq & C^{k-1} M^{k-1} |B_R(0)|  \|V\|_{j-3,2} \nonumber \\
  & \leq  & C^j M^{j} .
\eea

\noindent
3. \textbf{Case 3:} We assume the product contains two terms of order $j-3$.
Using the results of Case 1 and the H\"{o}lder inequality, we obtain
\bea\label{eq:case3-d1}
\int_\R |D^{\alpha^1}(V) D^{\alpha^2}(V) \cdots D^{\alpha^k}(V)| & \leq & C^{k-2} M^{k-2} \int_\R |D^{\alpha^1}(V) D^{\alpha^2}(V)| \nonumber \\
 & \leq & C^{k-2} M^{k-2}   \|V\|^2_{j-3,2}\leq C^j M^{j}.
\eea
\end{proof}

It follows from the bound in Proposition \ref{prop:sobolev-bd1d} and \eqref{eq:case1-d1}--\eqref{eq:case3-d1}
that
\beq
\|V\|_{j-2,2}^2 \leq C_j,
\eeq
where the constant depends on $M$. Consequently, the constant is uniform over all $V \in \mathcal{I}_R (V_0)$.

%%%%%%%%%%%%%%%%%%%%%%%%%%%%%%%%%%%%%%%%%%%%%%%%%%%%%%%%%%%%%%%%%%%%%%%
\subsection{The $d \geq 3$ dimensional case}\label{subsec:odd-dim1}

For $d \geq 3$, we follow the idea of the proof given by Donnelly  \cite{donnelly}. This requires reordering the $D^{\alpha^i}(V)$ terms in the integral of \eqref{eq:sobolev-sum1} according to their order $|\alpha^i|$.
%Fix $k$ and use the truncated notation $P_i=P^k_{\alpha_i}(V)$.
%Then reorder the terms according to $ord \, P_i$ and define $T$ in the following way
For fixed $k$, we write the integrand of the integral in \eqref{eq:sobolev-sum1} as
\beq\label{eq:ordering1}
T =: D^{\alpha^1}(V) D^{\alpha^2}(V) \cdots D^{\alpha^{\ell}}(V) D^{\alpha^{\ell +1}}(V) \cdots D^{\alpha^k}(V) ,
\eeq
where the ordering is chosen such that
\bea\label{eq:ordering2}
i \leq \ell & \Rightarrow	&  d > 2(j- |\alpha^i| - 3) \\
i  >   \ell & \Rightarrow   &  d \leq 2( j - |\alpha^i| - 3 ).
\eea
We will bound the integral in \eqref{eq:sobolev-sum1} using the generalized H{\"o}lder's inequality \eqref{eq:gen-holder1} and the Sobolev Embedding Theorem, Theorem \ref{thm:sobolevEmbedding1}.
 The conditions on $|\alpha^i|$ determine which case of the Sobolev Embedding Theorem, Theorem \ref{thm:sobolevEmbedding1}, for $p=2$ and
       $k=(j- |\alpha^i| -3)$ is appropriate.

\begin{proposition}\label{Donnelly:4.6} \cite[Lemma 4.6]{donnelly}
If $d\geq 3, j>\frac{d}{2}+1$,  and $ \|V\|_{j-3,2}\leq C_1$,  then
\beq\label{eq:multiT1}
\int_{\R^d} ~ |D^{\alpha^1}(V) D^{\alpha^2}(V) \cdots D^{\alpha^{\ell}}(V) D^{\alpha^{\ell +1}}(V) \cdots D^{\alpha^k}(V)| \leq  C_2\left(1+\|V\|_{j-2,2}^\beta\right) ,
\eeq
where $\beta<2$ and $C_2$ depends on $C_1$.
\end{proposition}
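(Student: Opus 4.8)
The plan is to bound $\int_{\R^d}|T|$ by the generalized H\"older inequality, controlling each factor $D^{\alpha^i}(V)$ in a suitable $L^{q_i}(B_R(0))$ by combining the Sobolev Embedding Theorem, Theorem \ref{thm:sobolevEmbedding1}, with interpolation between the two Sobolev norms at our disposal: the norm $\|\cdot\|_{j-3-|\alpha^i|,2}$ of $D^{\alpha^i}(V)$, which is $\le C_1$ by the hypothesis $\|V\|_{j-3,2}\le C_1$ (note $0\le |\alpha^i|\le j-k\le j-3$, so this order is nonnegative), and the norm $\|\cdot\|_{j-2-|\alpha^i|,2}\le\|V\|_{j-2,2}$, the quantity we are estimating. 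Following Donnelly, I would first reorder the factors as in \eqref{eq:ordering1}--\eqref{eq:ordering2}. For $i>\ell$ one has $j-3-|\alpha^i|\ge d/2$, so Theorem \ref{thm:sobolevEmbedding1} already gives $\|D^{\alpha^i}(V)\|_{q_i}\le C\|D^{\alpha^i}(V)\|_{j-3-|\alpha^i|,2}\le CC_1$ with $q_i=\infty$ (or any large finite $q_i$ in the borderline case $j-3-|\alpha^i|=d/2$), so these factors cost only a bounded constant and negligible H\"older budget. For $i\le\ell$ one has $0\le j-3-|\alpha^i|<d/2$; here I would interpolate, taking $\theta_i\in[0,1]$ and $s_i:=(j-3-|\alpha^i|)+\theta_i\le\min(d/2,\,j-2-|\alpha^i|)$, to obtain
\[
\|D^{\alpha^i}(V)\|_{q_i}\le C\,\|D^{\alpha^i}(V)\|_{s_i,2}\le C\,C_1^{\,1-\theta_i}\,\|V\|_{j-2,2}^{\,\theta_i},\qquad \tfrac1{q_i}=\tfrac12-\tfrac{s_i}{d}.
\]

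Next I would choose the interpolation parameters so that $\sum_{i=1}^k 1/q_i=1$ (this is where the structure of $\mathcal{A}_{j,k}$ enters: the constraints $|\alpha^i|\le j-k$ and $\sum_i|\alpha^i|=2(j-k)$ guarantee the required weights exist; in the degenerate situation where even the base exponents sum below $1$ one simply takes all $\theta_i=0$ and the integral is bounded outright since $B_R(0)$ is bounded). Generalized H\"older then yields $\int_{\R^d}|T|\le C\,C_1^{\,k-\beta}\,\|V\|_{j-2,2}^{\,\beta}$ with $\beta:=\sum_{i\le\ell}\theta_i$. The key point is that once $\sum 1/q_i=1$ is imposed, $\beta$ is completely determined by the multi-indices: summing $1/q_i=\tfrac12-\tfrac{s_i}{d}$ over $i\le\ell$ gives
\[
\beta=\frac{d\ell}{2}-d-\sum_{i\le\ell}(j-3-|\alpha^i|)=\frac{d\ell}{2}-d-\ell(j-3)+\sum_{i\le\ell}|\alpha^i|.
\]
Using $\sum_i|\alpha^i|=2(j-k)$, $|\alpha^i|\le j-k$ and $k\ge3$, one checks by an elementary case analysis on $\ell$ that $\beta<2$, and this is precisely where the dimensional restriction $j>\tfrac d2+1$ is used: the tightest arrangement, $\ell=k$ with $\sum_{i\le\ell}|\alpha^i|=2(j-k)$, gives $\beta=\tfrac{dk}2-d+k+j(2-k)$, which is $<2$ if and only if $j>\tfrac d2+1$, while the remaining arrangements are handled by the same inequalities. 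Finally I would absorb $C_1$ and the fixed embedding and interpolation constants into $C_2$ and use $\|V\|_{j-2,2}^{\,\beta}\le 1+\|V\|_{j-2,2}^{\,\beta}$ to reach the stated bound; the cone condition required by Theorem \ref{thm:sobolevEmbedding1} holds for the ball, and since the potentials are compactly supported (distributional) derivatives, the argument applies verbatim in the less-regular setting of section \ref{sec:less-reg-compactness}.

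The main obstacle is the bookkeeping in the middle step: one must simultaneously distribute the H\"older weights so that $\sum 1/q_i=1$ with each $\theta_i\in[0,1]$ and each $s_i\le d/2$, \emph{and} verify that the resulting total exponent $\beta=\sum\theta_i$ is strictly below $2$. Both hinge on the precise constraints built into $\mathcal{A}_{j,k}$---in particular $k\ge3$ and $\sum_i|\alpha^i|=2(j-k)$---together with $j>\tfrac d2+1$. This is also exactly the place where the scheme collapses for $d\ge4$: there the inequality $\beta<2$ fails for $j=3$ (equivalently, for the cubic term $\int V^3$, cf.\ \eqref{eq:v3bound4}), so that although Proposition \ref{Donnelly:4.6} itself remains valid as stated, its hypothesis $j>\tfrac d2+1$ can no longer be met at the base of the induction, consistent with the discussion following Theorem \ref{thm:uniform-sobolev1}.
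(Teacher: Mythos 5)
Your route is viable and, at the level of strategy, parallels the paper's (which follows Donnelly): reorder the factors as in \eqref{eq:ordering1}--\eqref{eq:ordering2}, absorb the ``good'' factors ($2(j-3-|\alpha^i|)\ge d$) in $L^\infty$ (or large finite $L^q$ at the borderline) at the cost of $\|V\|_{j-3,2}$ only, and spend a total exponent $\beta<2$ of $\|V\|_{j-2,2}$ on the remaining factors via Sobolev embedding plus interpolation. The implementation, however, is genuinely different. The paper never interpolates in the Sobolev scale: it runs a case analysis on $\ell$ ($\ell\le 1$ gives $\beta=0$; $\ell=2$ gives $\beta\le 1$ by bumping one exponent $r_1\to r_1+\varepsilon<\tfrac{2d}{d-2}$; $\ell\ge 3$ distinguishes at most two factors and uses the $L^p$-interpolation of Theorem \ref{thm:interpolation1} between two integer-order embeddings, one controlled by $\|V\|_{j-3,2}$ and one by $\|V\|_{j-2,2}$, so that $\beta=(1-\beta_1)+(1-\beta_2)<2$), with the hypothesis $j>\tfrac d2+1$ entering only through the verification of the H\"older condition. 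You instead interpolate uniformly in the fractional Sobolev scale, $s_i=j-3-|\alpha^i|+\theta_i$, and read off $\beta=\sum\theta_i$ from the H\"older constraint; this is cleaner and makes the role of $j>\tfrac d2+1$ transparent (your extremal computation $\ell=k$, $\sum|\alpha^i|=2(j-k)$ reproducing exactly that threshold is correct). The price is that you use fractional Sobolev embedding and interpolation of $H^s$-norms, which are standard for compactly supported functions but go beyond the integer-order Theorem \ref{thm:sobolevEmbedding1} and the $L^p$-interpolation Theorem \ref{thm:interpolation1} that the paper sets up; you should either cite or prove them (the $H^s$-interpolation is immediate from Plancherel and H\"older).

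The one substantive incompleteness is the step you yourself flag as ``elementary case analysis'': you must actually verify, for \emph{all} admissible configurations in $\mathcal{A}_{j,k}$, (a) that the weights can be chosen with $\theta_i\in[0,1]$ and $s_i\le d/2$ so that the H\"older condition holds, and (b) that the resulting $\beta<2$; checking only the extremal arrangement does not discharge this, and it is precisely where the paper spends its Cases 1--4. The verification does close: writing $a_i=j-3-|\alpha^i|$ for the bad factors, the deficit to be covered is $\Delta=\sum_{i\le\ell}(\tfrac d2-a_i)-d$ while the available capacity is $\sum_{i\le\ell}\min\bigl(1,\tfrac d2-a_i\bigr)$, and their difference equals $d-\sum_{i\le\ell}\max\bigl(0,\tfrac d2-1-a_i\bigr)$, which one checks is $\ge 1$ using $a_i\ge k-3$, $\sum_{i\le\ell}|\alpha^i|\le 2(j-k)$, $k\ge 3$ and $j>\tfrac d2+1$; similarly $\beta=\Delta$ is maximized (over configurations with positive deficit) at your extremal case, so $\beta<2$ throughout. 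Please include this bookkeeping (or the paper's case-by-case substitute) explicitly; with it, your argument is a correct alternative proof, and your closing remarks about $d\ge4$ and the base of the induction are consistent with the discussion following Theorem \ref{thm:uniform-sobolev1}.
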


%%%%%%%%%%%%%%%%%%%%%%%%%%%%%%%%%%%%%%%%%%%%%%%%%%%%%%%%%%%%%%%%%%%%%%%%%%%%%%%%%%%%%%%%%%%%%
\begin{proof}
1. We will look at the possible values of $\ell$ and for each case the general strategy will be to use the generalized H\"{o}lder's inequality \eqref{eq:gen-holder1}to show
\beq\label{eq:sobolev-sum2}
\int_{\R^d} ~ |T| \leq C \prod \limits_{i=1}^k \|D^{\alpha^i}(V)\|_{r_i} ,
\eeq
with $\sum\limits_{i=1}^k \frac{1}{r_i}=1$. We recall that the integrand $T$ in \eqref{eq:sobolev-sum1} is ordered as in \eqref{eq:ordering1} with $\ell$ determined as in \eqref{eq:ordering2}.
%defined by
%$$
%T = D^{\alpha^1}(V) D^{\alpha^2}(V) \cdots D^{\alpha^{\ell}}(V) D^{\alpha^{\ell +1}}(V) \cdots D^{\alpha^k}(V).
%$$
For the factors with $i \geq \ell+1$,
%we use two Sobolev inequalities.
so that $2(j- |\alpha^i|-3) > d$, we have
\beq\label{eq:sobolev1-ileql}
\|D^{\alpha^i} (V) \|_\infty\leq C\|V\| _{j-3,2},
\eeq
% when  $d>2(j-ord \, P_i-3)$  and
and when $i$ is such that $2(j- | \alpha^i| - 3 ) = d$, for any $r_i$ with $2\leq r_i <\infty$, we have
\beq\label{eq:sobolev2-ileql}
\| D^{\alpha_i}(V)\|_{r_i} \leq C \|V\| _{j-3,2} .
\eeq
These two inequalities result in the bound
\beq\label{eq:firstbd1}
\int_{\R^d} ~ |T| \leq C \prod\limits_{i=1}^k \|D^{\alpha^i}(V)\|_{r_i}  \leq  \tilde{C} \|V\| _{j-3,2}^{k-\ell} \prod\limits_{i=1}^\ell \|D^{\alpha^i}(V)\|_{r_i} .
\eeq
The remainder of the proof is devoted to showing that for $1 \leq i \leq \ell$, we can choose $r_i$ in \eqref{eq:sobolev-sum2} in order to apply the appropriate Sobolev inequality in Theorem \eqref{thm:sobolevEmbedding1}. We first note that when $\ell=0$ the estimate holds for $\beta=0$ using the above method.

%\noindent
%2. For $i> \ell$, we use two Sobolev inequalities
%\[\|D^{\alpha^i}(V)\|_\infty\leq C\|V\| _{j-3,2}\]
%when  $d>2(j-|\alpha^i|-3)$  and
%\[\|D^{\alpha^i}(V)\|_{r_i}\leq C\|V\| _{j-3,2}\]
%where $2\leq r_i <\infty$ when  $d=2(j-|\alpha^i|-3)$.  These two inequalities give the bound
%\[\int|T|\leq C \prod\limits_{i=1}^k \|D^{\alpha^i}(V)\|_{r_i}\leq \tilde{C}\|V\| _{j-3,2}^{k-l}\prod\limits_{i=1}^l \|D^{\alpha^i}(V)\|_{r_i}\]
%The remainder of the proof is to show that for $1\leq i\leq \ell$, we can choose $r_i$ to apply the appropriate Sobolev inequality.
%We first note that when $\ell=0$ the estimate holds for $\beta=0$ using the above method.
%\noindent \textbf{Case 0:}  When $l=0$ the estimate holds for $\beta=0$ using the above method.

\noindent
3. \textbf{Case 1:} For $\ell=1$, we have $d>2(j-|\alpha^1|-3)$, so setting
\[r_1=\frac{2d}{d-2(j-|\alpha^1|-3)}\]
yields
$$
\|D^{\alpha^1}(V)\|_{r_1}\leq C\|D^{\alpha^i}(V)\|_{j-|\alpha^1|-3,2}\leq C\|V\| _{j-3,2},
$$
by the Sobolev inequality in Theorem \eqref{thm:sobolevEmbedding1}. The only condition on $j$ is $2\leq \frac{2d}{d-2(j-|\alpha^1|-3)}$ or $j-3\geq  |\alpha^1|$ which is true for every $\alpha^i$ and $j$ as in \eqref{eq:indexset1}.
Since $\frac{1}{r_1}\leq \frac{1}{2}$, we can choose the remaining $r_i$'s to meet the condition
$\sum\limits_{i=1}^k \frac{1}{r_i}=1$.  So for $\ell=1$ we have the bound with $\beta=0$.

\noindent
4. \textbf{Case 2:} For $\ell=2$, we argue as follows.
If $|\alpha^1|$ and $|\alpha^2|$ are such that $r_1$ and $r_2$ (as chosen in case $\ell=1$) satisfy
\[\frac{1}{r_1}+\frac{1}{r_2}<1\]
then we proceed as in the case $\ell=1$ and apply the generalized H\"{o}lder's inequality to get the result with $\beta =0$.
Now assume $\frac{1}{r_1}+\frac{1}{r_2}=1$. Since $|\alpha^i|\leq j-3$, this implies $|\alpha^1|=|\alpha^2|=j-3$ and thus $r_1=r_2=2$.  We may then apply the generalized H\"{o}lder's inequality to get:
\[\int_{\R^d} |T|\leq C \|D^{\alpha^1}(V)\|_{r_1+\varepsilon}\prod\limits_{i=2}^k \|D^{\alpha^i}(V)\|_{r_i}, \]
where $\varepsilon>0$ and $r_i$ for $i \geq 3$ are chosen to satisfy the H\"{o}lder condition.  Furthermore if we choose $\varepsilon$ such that $r_1+\varepsilon<\frac{2d}{d-2}$ then the general Sobolev inequality in Theorem \ref{thm:sobolevEmbedding1} gives that
\[\|D^{\alpha^1}(V)\|_{r_1+\varepsilon_i}\leq C_1\|D^{\alpha^1}(V)\|_{1,2}\leq C_2\|V\|_{j-2,2}  , \]
so we get the result with $\beta=1$.

\noindent
5.  \textbf{Case 3:} We now consider $\ell\geq 3$ and we suppose that $d>2(j-|\alpha^i|-2)$ for, say, $i=1,2$.
Let $r_i$ be as in cases 1 and 2 and set
\[
s_i=\frac{2d}{d-2(j-|\alpha^i| -2)} .
\]
The standard $L^p$ interpolation \eqref{eq:interpolate1} estimate allows us to conclude that for any $0<\varepsilon_i<1$, there exists a $0<\beta_i<1$ such that
\beq
\|D^{\alpha^i}(V)\|_{r_i+\varepsilon_i} \leq \|D^{\alpha^i}(V)\|_{r_i}^{\beta_i}\|D^{\alpha^i}(V)\|_{s_i}^{1-\beta_i} .
\eeq
Using the generalized H\"{o}lder's inequality we have
%\begin{equation}
%\begin{split}
\bea\label{eq:sobolev-sum3}
\int_{\R^d} |T|&\leq & C \|D^{\alpha^1}(V)\|_{r_1+\varepsilon_1}\|D^{\alpha^2}(V)\|_{r_2+\varepsilon_2}\prod\limits_{i=3}^k \|D^{\alpha^i}(V)\|_{r_i} \nonumber  \\
& \leq & \|D^{\alpha^1}(V)\|_{r_1}^{\beta_1}\|D^{\alpha^1}(V)\|_{s_1}^{1-\beta_1}\|D^{\alpha^2}(V)\|_{r_2}^{\beta_2}\|D^{\alpha^2}(V)\|_{s_2}^{1-\beta_2}\prod\limits_{i=3}^k \|D^{\alpha^i}(V)\|_{j-|\alpha^i| -3,2} \nonumber  \\
&\leq & C\|V\|_{j-3,2}^{\beta_1}\|D^{\alpha^1}(V)\|_{s_1}^{1-\beta_1}\|V\|_{j-3}^{\beta_1}\|D^{\alpha^2}(V)\|_{s_2}^{1-\beta_2}\prod\limits_{i=3}^k \|V\|_{j -3,2} \nonumber  \\
&\leq  & C\|D^{\alpha^1}(V)\|_{s_1}^{1-\beta_1}\|D^{\alpha^2}(V)\|_{s_2}^{1-\beta_2} \nonumber  \\
&\leq  & C\|V\|_{j-2,2}^\beta,
\eea
%\end{split}
%\end{equation}
where $\beta<2$. The index $r_i$ may be chosen arbitrarily for $i>l$ and as in case 1 for $i\leq l$. Consequently, in order to satisfy the H\"{o}lder condition we require:
\beq\label{eq:holder1}
\frac{1}{r_1+\varepsilon_1}+\frac{1}{r_2+\varepsilon_2}+\sum\limits_{i=3}^l \frac{1}{r_i}<1  .
\eeq
For sufficiently large $\varepsilon_1,\varepsilon_2 > 1$, this is implied by
$$
\frac{1}{s_1}+\frac{1}{s_2}+\sum\limits_{i=3}^l \frac{1}{r_i}<1  .
$$
Substituting for $s_i$ and $r_i$ gives
\beq\label{eq:holder5}
\sum\limits_{i=1}^2\frac{d-2(j-|\alpha^i|-2)}{2d}+\sum\limits_{i=3}^l\frac{d-2(j-|\alpha^i|-3)}{2d}<1 ,
\eeq
which may be rewritten as
$$
(d-2j-6)l+2\sum\limits_{i=1}^l |\alpha^i|<2d+4  .
$$
Because $\sum\limits_{i=1}^l |\alpha^i|\leq 2(j-k)$, it is sufficient to show
$$
(d-2j-6)l+4(j-k)<2d+4 .
$$
Using assumption $l\geq 3$ lets us rewrite the inequality as
$$
\frac{d}{2}+3-\frac{2k-4}{l-2}<j  .
$$
Then $k\geq l\geq 3$ gives $\frac{2k-4}{l-2}\geq\frac{2k-4}{k-2}=2$. Consequently, the condition on the indices for the generalized H\"older inequality  \eqref{eq:holder1} is satisfied provided
$$
\frac{d}{2}+1<j  ,
$$
which is an assumption on $j$ and $d \geq 3$ in Proposition \ref{prop:sobolev-bd1d}. % always true since $d \geq 3$ and $j \geq ??$.

\noindent
6. \textbf{Case 4:} We now consider $\ell \geq 3$ and we suppose the complementary situation to case 3: $d\leq 2(j-|\alpha^i| -2)$ for, say, $i=1,2$.
 %$D^{\alpha^1}(V)$ and $D^{\alpha^2}(V)$.
For $2\leq s< \infty$, we have the embedding
$$
\|D^{\alpha^i}(V)\|_s \leq C \|D^{\alpha^i}(V)\|_{j-|\alpha^i| -2,2} \leq  C \|V\|_{j-2,2}.
$$
Then, standard $L^p$ interpolation given in Theorem \ref{thm:interpolation1}, gives for $2< t<s$
$$
\|D^{\alpha^i}(V)\|_t\leq \|D^{\alpha^i}(V)\|_s^{\beta_i}\|D^{\alpha^i}(V)\|_2^{1-\beta_i}.
$$
We may take $t$ to be arbitrarily large reducing the H\"{o}lder condition to
$$
\sum\limits_{i=3}^l\frac{1}{r_i}<1.
$$
If $\ell=3$, the condition is met as $r_3\geq 2$, so we assume $l\geq 4$.  Substituting for $r_i$ and rewriting the inequality we get
$$
(\ell-2)(d-2j+6)+2\sum\limits_{i=3}^\ell |\alpha^i|<2d.
$$
Using the inequality $\sum\limits_{i=3}^l |\alpha^i|\leq\sum\limits_{i=1}^k |\alpha^i|\leq 2(j-k)$ gives the sufficient condition
$$
(l-2)(d-2j+6)+4(j-k)<2d ,
$$
which can be recast as
$$
(l-4)d+6(l-2)-4k<(2l-8)j.
$$
If $\ell=4$, then the inequality reduces to $12-4k<0$ which always holds as $4= \ell\leq k$.  For $\ell\geq 5$, we rewrite the inequality as
$$
\frac{d}{2}+3\frac{l-4}{l-4}+3\frac{2}{l-4}-\frac{2k}{l-4}<j ,
$$
which reduces to
$$
\frac{d}{2}+3-\frac{(2k-6)}{\ell-4}< j.
$$
Since $k\geq l \geq 5$, it is sufficient for the following inequality to hold:
$$
%\begin{split}
\frac{d}{2}+3-\frac{(2k-6)}{k-4}< j .
$$
With $k \geq 5$, this is satisfied if
$$
\frac{d}{2}+1-\frac{4}{k-4}< j ,
$$
which is guaranteed by the hypothesis $\frac{d}{2}+1< j$.
%\end{split}
%\end{equation}
%This gives the condition on $j$.
\end{proof}

%%%%%%%%%%%%%%%%%%%%%%%%%%%%%%%%%%%%%%%%%%%%%%%%%%%%%%%%%%%%%%%%%%%%%%%%%%%%%%%%%%%%%%%%%%%%%%%%%%%%%%%%%%%%%%%%%%%
\section{Appendix: Various estimates}\label{sec:appendix-est1}
\setcounter{equation}{0}

We summarize various estimates necessary for the proofs. Complete descriptions for the material in sections \ref{subsec:resolvent-bds1} and \ref{subsec:sing-values1} may be found in, for example, \cite{dyatlov-zworski2016},  and for section \ref{subsec:inequalities1} in, for example, \cite{evans1} and \cite{adams-fournier1}.

%%%%%%%%%%%%%%%%%%%%%%%%%%%%%%%%%%%%%%%%%%%%%%%%%%%%%%%%%%%%%%%%%%%%%%%%%%%%%

\subsection{Resolvent bounds}\label{subsec:resolvent-bds1}

The kernels of the resolvent $R_0(\lambda) = ( H_0 - \lambda^2)^{-1}$ in dimensions $d=1$ and $d=3$ are given by
\bea\label{eq:kernel1}
R_0(x-y;\lambda)& = & \frac{i}{2 \lambda} e^{i \lambda \|x-y\|} , ~~~d=1 , \\
R_0(x-y;\lambda)& = & \frac{1}{4 \pi^2} \frac{e^{i \lambda \|x-y\|}}{\| x-y \|} , ~~~d=3.
\eea
Let $\rho \in L_0^\infty (\R^d)$ be a compactly supported function.  be the characteristic function on a ball of radius $R>0$.
For all $\lambda \in \C \backslash \{ 0 \}$, the free resolvent in $d=1$ satisfies the bound:
\beq\label{eq:resolvent-est1}
\| \rho  R_0(\lambda) \rho \| \leq C_\rho \frac{1}{| \lambda|} e^{- \alpha_\rho \Im \lambda } .
\eeq
For all $\lambda \in \C$, the free resolvent in $d=3$ satisfies the bound:
\beq\label{eq:resolvent-est2}
\| \rho R_0(\lambda) \rho \| \leq D_\rho  e^{- \beta_\rho  \Im \lambda } .
\eeq
The finite positive constants $C_\rho, D_\rho, \alpha_\rho, \beta_\rho > 0$ depend on $\rho$.

%%%%%%%%%%%%%%%%%%%%%%%%%%%%%%%%%%%%%%%%%%%%%%%%%%%%%%%%%%%%%%%%%%%%%%%%%%%%%%

\subsection{Singular value estimates}\label{subsec:sing-values1}

If $\rho \geq 0$ is a compactly supported $C^2$-function, then
\beq\label{eq:sing-value1}
\mu_j(\rho R_0(\lambda) \rho) \leq \frac{C}{j^{\frac{2}{d}}} .
\eeq
Recall that for $V \in C_0^\infty (B_R(0);\R)$, the operator $K_V(\lambda) := V R_0(\lambda) \chi_R$, where $\chi_R$
is the characteristic function on $B_R(0)$. We then have
\beq\label{eq:sing-value2}
\mu_j(K_V(\lambda)) \leq \frac{C(V)}{j^{\frac{2}{3}}} ,
\eeq
where $C(V)$ depends only of $\| V \|_\infty$ and is locally uniformly bounded in $\lambda$.
A consequence of \eqref{eq:sing-value2} is that $K_V(\lambda)$ is in the Hilbert Schmidt class.

%%%%%%%%%%%%%%%%%%%%%%%%%%%%%%%%%%%%%%%%%%%%%%%%%%%%%%%%%%%%%%%%%%%%%%%%%%%%%%

\subsection{Inequalities}\label{subsec:inequalities1}

Section \ref{sec:sobolev-bounds1} makes repeated use of the following results.

\vspace{.1in}

\noindent
\emph{Generalized H\"older Inequality}

\vspace{.1in}
\begin{theorem}\cite[Appendix B.2.g.]{evans1}
%\label{eq:thm:sobolevInequality1}
Let $\Omega \subset $ be an open subset of $\mathbb{R}^d$. For indices $1 \leq p_1, \ldots, p_m \leq \infty$ satisfying
 $$
 \sum_{j=1}^m \frac{1}{p_j} = 1,
 $$
for any $u_k \in L^{p_k} (\Omega)$, we have
\beq\label{eq:gen-holder1}
\int_\Omega ~|u_1 \cdots u_m| \leq \prod_{j=1}^m \| u_k \|_{p_j} .
\eeq
\end{theorem}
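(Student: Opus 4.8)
The plan is to prove the inequality by induction on the number $m$ of factors, reducing at each stage to the classical two-function H\"older inequality. The base case $m=1$ is the trivial identity $\int_\Omega |u_1| = \|u_1\|_1$, and the case $m=2$ is the ordinary H\"older inequality, which in turn follows from Young's inequality $ab \le a^p/p + b^q/q$ for conjugate exponents $1/p+1/q=1$, applied pointwise to $a=|u_1(x)|/\|u_1\|_{p_1}$, $b=|u_2(x)|/\|u_2\|_{p_2}$ and then integrated over $\Omega$; the degenerate situations in which some $\|u_k\|_{p_k}$ vanishes or is infinite are disposed of directly. Since the statement is quoted from \cite[Appendix B.2.g.]{evans1}, one may also simply cite it, and the argument below is included only for completeness.

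For the inductive step, assume the bound holds for any $m-1$ functions and let $p_1,\dots,p_m$ satisfy $\sum_{j=1}^m 1/p_j = 1$. First I would handle the case that one exponent, say $p_m$, equals $\infty$: then $\int_\Omega |u_1\cdots u_m| \le \|u_m\|_\infty \int_\Omega |u_1\cdots u_{m-1}|$, and because $\sum_{j=1}^{m-1}1/p_j = 1$ the inductive hypothesis applies to $u_1,\dots,u_{m-1}$. So assume every $p_j<\infty$, hence $1/p_j>0$. Define $q$ by $1/q := \sum_{j=1}^{m-1} 1/p_j = 1 - 1/p_m$, so that $q$ and $p_m$ are conjugate with $1 \le q < \infty$. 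The two-function H\"older inequality applied to the pair $(u_1\cdots u_{m-1},\,u_m)$ then gives
\beq
\int_\Omega |u_1\cdots u_m| \;\le\; \|u_1\cdots u_{m-1}\|_q \,\|u_m\|_{p_m}.
\eeq

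It remains to estimate $\|u_1\cdots u_{m-1}\|_q$. Writing $\|u_1\cdots u_{m-1}\|_q^q = \int_\Omega |u_1|^q\cdots|u_{m-1}|^q$, I would apply the inductive hypothesis to the $m-1$ functions $|u_j|^q$ with exponents $r_j := p_j/q$; these are admissible since $\sum_{j=1}^{m-1}1/r_j = q\sum_{j=1}^{m-1}1/p_j = q\cdot(1/q) = 1$, with $|u_j|^q\in L^{r_j}(\Omega)$ because $\| |u_j|^q\|_{r_j} = \|u_j\|_{p_j}^q<\infty$. This yields $\int_\Omega |u_1|^q\cdots|u_{m-1}|^q \le \prod_{j=1}^{m-1}\|u_j\|_{p_j}^q$; taking $q$-th roots and substituting into the displayed inequality produces $\int_\Omega |u_1\cdots u_m| \le \prod_{j=1}^m \|u_j\|_{p_j}$, closing the induction.

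There is no genuine obstacle in this argument; the only points needing care are the exponent bookkeeping — verifying $q \ge 1$ and that the rescaled exponents $r_j$ still satisfy $\sum_{j=1}^{m-1} 1/r_j = 1$ — and the treatment of the boundary cases (an infinite exponent, or a factor of zero or infinite norm), all of which are entirely standard.
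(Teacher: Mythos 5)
Your argument is correct. The paper itself gives no proof of this statement --- it is quoted verbatim from Evans (Appendix B.2.g) as a standard tool for the Sobolev estimates in the appendix --- so simply citing it, as you note, would suffice; your induction on $m$, reducing to the two-function H\"older inequality via the conjugate pair $(q,p_m)$ with $1/q=\sum_{j=1}^{m-1}1/p_j$ and then rescaling the exponents to $r_j=p_j/q$, is the standard proof and is complete. The only bookkeeping point you left implicit, namely $r_j\geq 1$, follows immediately from $1/q\geq 1/p_j$, i.e.\ $q\leq p_j$ for each $j\leq m-1$, so there is no gap.
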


\vspace{.1in}

\noindent
\emph{General Sobolev Inequality}

\vspace{.1in}
The Sobolev Embedding Theorem and corresponding inequalities are used through section \ref{sec:sobolev-bounds1} and \ref{sec:less-reg-compactness}. The general theorem holds for domains $\Omega \subset \R^d$ with the cone conditions \cite[Definition 4.6]{adams-fournier1}. We state only the cases used in the present paper.

\vspace{.1in}
\begin{theorem}\cite[Theorem 4.12]{adams-fournier1} \cite[Theorem 6, section 5.6]{evans1}\label{thm:sobolevEmbedding1}
Let $\Omega \subset \mathbb{R}^d$ be an open set with the cone condition and let $j \geq 0$ and $m \geq 1$ be integers. %$C^1$ boundary.
\begin{enumerate}

\item Let $u\in H^{k,p} (\Omega)$.
If $0 < k< \frac{d}{p}$, then $u\in
L^q( \Omega)$, where the indices satisfy $\frac{1}{q}=\frac{1}{p}-\frac{k}{d}$, and
$$
\| u \|_{q} \leq C \| u \|_{k,p},
$$
for a finite constant $C > 0$ depending on $k,p, d$ and $\Omega$.

\item For any $u\in H^{j+m,2}(\Omega)$, with $m > \frac{d}{2}$,
we have
\begin{equation}
\|u\|_{{j,q}}  \leq C \|u\|_{{j+m,2}},
\end{equation}
for $2 \leq q \leq \infty$ and a finite constant $C>0$ depending on $j, m,d$ and $\Omega$.

\end{enumerate}
\end{theorem}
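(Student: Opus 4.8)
The plan is to reduce the theorem to the classical inequalities on all of $\mathbb{R}^d$ — the Gagliardo--Nirenberg--Sobolev inequality for part (1) and Morrey's inequality for the $L^\infty$ endpoint of part (2) — and then to transfer the resulting estimates to the domain $\Omega$. The transfer I would handle, following Adams and Fournier, via the Sobolev integral representation formula: for $u$ smooth one represents $u(x)$ through an integral of $D^k u$ over a fixed finite cone with vertex at $x$ (modulo lower-order terms), and then covers $\Omega$ by translates and rotations of such cones — the cone condition is precisely what makes this covering possible. This yields constants depending only on $k,p,d$ and on the aperture and height of an admissible cone for $\Omega$. For smoother domains one could instead compose with a bounded extension operator $H^{k,p}(\Omega)\to H^{k,p}(\mathbb{R}^d)$ and quote the $\mathbb{R}^d$ results directly, but the cone condition alone is too weak to guarantee such an operator, so the direct argument is the robust route.

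For the core estimate on $\mathbb{R}^d$ with $k=1$ and $1\le p<d$, I would first take $u\in C_c^\infty(\mathbb{R}^d)$ and, in each coordinate direction $i$, use the fundamental theorem of calculus to bound $|u(x)|\le\int_{\mathbb{R}}|\partial_i u|\,dx_i$. Multiplying these $d$ bounds, raising to the power $\tfrac{1}{d-1}$, and integrating one variable at a time while applying the generalized H\"older inequality \eqref{eq:gen-holder1} with $m=d-1$ exponents all equal to $d-1$ (the Loomis--Whitney scheme) yields $\|u\|_{d/(d-1)}\le\prod_{i=1}^d\|\partial_i u\|_1^{1/d}\le C\|\nabla u\|_1$. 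Applying this to $|u|^{\gamma}$ with $\gamma=(d-1)p/(d-p)$ and one further H\"older step converts it into $\|u\|_{p^*}\le C(d,p)\|\nabla u\|_p$, where $\tfrac{1}{p^*}=\tfrac{1}{p}-\tfrac{1}{d}$, and density of $C_c^\infty$ in $H^{1,p}(\mathbb{R}^d)$ extends it to all of $H^{1,p}$.

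The higher-order case of part (1) follows by iteration: if $0<k<d/p$, each first derivative of $u\in H^{k,p}$ lies in $H^{k-1,p}$, so one step drops the order by one and improves the exponent from $p$ to $p_1$ with $\tfrac{1}{p_1}=\tfrac{1}{p}-\tfrac{1}{d}$; after $k$ steps one lands in $L^q$ with $\tfrac{1}{q}=\tfrac{1}{p}-\tfrac{k}{d}$, the constant being the product of the one-step constants. For part (2), with $m>d/2$, I would run the same iteration from $H^{m,2}$: the Lebesgue exponent strictly increases at each step, so after finitely many steps the remaining number $s$ of derivatives and the current exponent $p'$ satisfy $s p'>d$ (handling the borderline $sp'=d$ by one extra step into $L^q$, $q<\infty$ large), and at that stage Morrey's inequality gives the embedding into $C^{0,\gamma}\subset L^\infty$; on $\mathbb{R}^d$ this $L^\infty$ bound is also immediate from Plancherel, $\|u\|_\infty\le C\|\widehat{u}\|_1\le C\|\langle\xi\rangle^{-m}\|_{L^2}\,\|\langle\xi\rangle^{m}\widehat{u}\|_{L^2}$, with $\|\langle\xi\rangle^{-m}\|_{L^2}<\infty$ precisely because $2m>d$. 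Interpolating between $L^2$ and $L^\infty$ then covers every $q\in[2,\infty]$, and applying all of this to $D^\beta u$ for each multi-index $\beta$ with $|\beta|\le j$ and summing yields $\|u\|_{j,q}\le C\|u\|_{j+m,2}$.

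I expect the main obstacle to be not the $\mathbb{R}^d$ inequalities, which are classical, but the passage to a domain $\Omega$ satisfying only the cone condition: one must carry out the covering argument for the cone-based integral representation while keeping careful track of how the constant depends on the geometry of the cone (its aperture and height), since no extension operator is available at this level of regularity. In the present paper the statement is used purely as a citation from \cite{adams-fournier1} and \cite{evans1}, so the sketch above is merely the route a self-contained proof would take.
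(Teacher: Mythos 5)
Your sketch is correct, but note that the paper offers no proof of this statement at all: it is quoted verbatim as a citation of Adams--Fournier (Theorem 4.12) and Evans, and your outline is essentially the argument of those references (Gagliardo--Nirenberg--Sobolev plus iteration, the Morrey/Plancherel endpoint for $2m>d$, and the cone-based integral representation with a covering argument in place of an extension operator, which the cone condition alone indeed does not provide). So the proposal matches the approach the paper implicitly relies on, and nothing further is needed here.
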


\vspace{.1in}

%\noindent
%\emph{Sobolev Embedding Theorem}
%
%\vspace{.1in}
%
%\begin{theorem}\label{thm:SobolevEmbedding1}
%Let $\Omega \subset \mathbb{R}^d$, $u\in H^{j+m,2}(\Omega)$ and $m=\frac{d}{2}$. Then there exists a constant $C$ such that
%\begin{equation}
%\|u\|_{{j,q}}  \leq C \|u\|_{{j+m,2}},
%\end{equation}
%for $2\leq q \leq \infty$. That is, $H^{j+m,2}(\Omega)$  is continuously embedded into $H^{j,q}(\Omega)$ for $q \geq 2$.
%\end{theorem}

\vspace{.2in}

\noindent
\emph{$L^p$-interpolation Inequality}

\vspace{.1in}

\begin{theorem}\cite[Appendix B.2.h.]{evans1}\label{thm:interpolation1}
Let $\Omega \subset \mathbb{R}^d$ be an open bounded subset of $\R^d$. If $0 < \alpha < 1$ and $1 \leq r < p < s$, for all $u \in L^p (\Omega)$,
we have,
\beq
\|u\|_p \leq \| u \|_r^\alpha ~ \| u \|_s^{1 - \alpha} ,
\eeq
provided the indices satisfy
$$
\frac{1}{p} = \frac{\alpha}{r} + \frac{1-\alpha}{s} .
$$
\end{theorem}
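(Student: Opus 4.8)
The plan is to obtain this as a repackaging of the ordinary two-factor H\"older inequality. First I would record that the index constraint $\frac1p = \frac{\alpha}{r} + \frac{1-\alpha}{s}$ determines $\alpha$ uniquely, namely $\alpha = \frac{1/p - 1/s}{1/r - 1/s}$, and that $r < p < s$ (so that $\frac1r > \frac1p > \frac1s$) places it in $(0,1)$; thus there is no freedom in the choice of exponent and nothing to optimize.

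Next, assuming $s < \infty$, I would write $|u|^p = |u|^{\alpha p}\,|u|^{(1-\alpha)p}$ and apply H\"older with the exponents $q_1 = \tfrac{r}{\alpha p}$ and $q_2 = \tfrac{s}{(1-\alpha)p}$. The crux is the elementary verification that these are conjugate and admissible: from the index relation,
\[
\frac{1}{q_1} + \frac{1}{q_2} = \frac{\alpha p}{r} + \frac{(1-\alpha)p}{s} = p\Bigl(\frac{\alpha}{r} + \frac{1-\alpha}{s}\Bigr) = 1,
\]
while $q_1, q_2 \ge 1$ because $\frac{\alpha}{r} \le \frac1p$ and $\frac{1-\alpha}{s} \le \frac1p$ (each of the two nonnegative summands in the index relation is at most the whole). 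H\"older then yields
\[
\int_\Omega |u|^p \le \Bigl(\int_\Omega |u|^{\alpha p q_1}\Bigr)^{1/q_1}\Bigl(\int_\Omega |u|^{(1-\alpha)p q_2}\Bigr)^{1/q_2} = \Bigl(\int_\Omega |u|^r\Bigr)^{\alpha p/r}\Bigl(\int_\Omega |u|^s\Bigr)^{(1-\alpha)p/s},
\]
and extracting $p$-th roots gives $\|u\|_p \le \|u\|_r^{\alpha}\|u\|_s^{1-\alpha}$. For the degenerate case $s = \infty$ the index relation forces $\alpha = r/p$, and one argues directly from $\int_\Omega |u|^p \le \|u\|_\infty^{p-r}\int_\Omega |u|^r$, which gives $\|u\|_p \le \|u\|_\infty^{1-r/p}\|u\|_r^{r/p}$.

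I do not expect any genuine obstacle here: the entire content is in H\"older's inequality, and the only point needing care is the exponent bookkeeping above together with the reduction of the $s=\infty$ case. I would also remark in passing that boundedness of $\Omega$ is not actually used---the estimate holds verbatim on an arbitrary measure space---but it is stated in this restricted form because that is the only generality required in section~\ref{sec:sobolev-bounds1}.
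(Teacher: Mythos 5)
Your argument is correct and is exactly the standard one: the paper gives no proof of this statement, citing it to Evans, and the proof there is the same H\"older argument with exponents $r/(\alpha p)$ and $s/((1-\alpha)p)$ that you carry out (your exponent bookkeeping, the case $s=\infty$, and the remark that boundedness of $\Omega$ is not needed are all accurate). Nothing further is required.
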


%%%%%%%%%%%%%%%%%%%%%%%%%%%%%%%%%%%%%%%%%%%%%%%%%%%%%%%%%%%%%%%%%%%%%%%%%%%%%%%%%%%%%%%%%%%%%%%%%%%%%%%%%

\end{document}